\DeclareMathOperator{\Coeff}{Coeff}
\DeclareMathOperator*{\Res}{Res}
\newcommand\note[1]{\mbox{}\marginpar{ \scriptsize\raggedright
\hspace{1pt}\color{red} #1}}
\numberwithin{equation}{section}
\numberwithin{equation}{subsection}
\theoremstyle{plain}
\newtheorem*{theorem*}{Theorem}
\newtheorem{theorem}[equation]{Theorem}
\newtheorem{lemma}[equation]{Lemma}
\newtheorem{proposition}[equation]{Proposition}
\newtheorem{thm}[equation]{Theorem}
\theoremstyle{definition}
\newtheorem{example}[equation]{Example}
\newtheorem{remark}[equation]{Remark}
\newtheorem{definition}[equation]{Definition}
\newcommand{\fr}{\mathfrak{r}}
\def\C{\mathbb C}
\def\Q{\mathbb Q}
\def\Z{\mathbb Z}
\def\im{{\rm Im}}
\newcommand{\calv}{{\mathcal V}}
\newcommand{\calm}{{\mathcal M}}
\newcommand{\calt}{{\mathcal T}}
\newcommand{\cali}{{\mathcal I}}
\newcommand{\calO}{{\mathcal O}}
\newcommand{\calS}{{\mathcal S}}
\newcommand{\calL}{\mathcal{L}}
\newcommand{\tX}{\widetilde{X}}
\newcommand{\mfl}{\mathfrak{L}}
\newcommand{\cX}{{\mathcal X}}
\newcommand{\cO}{{\mathcal O}}
\newcommand{\bP}{{\mathbb P}}
\newcommand*{\linebundle}{\mathcal{L}}
\newcommand{\bC}{{\mathbb C}}
\newcommand{\cF}{{\mathcal F}}
\newcommand{\eca}{{\rm ECa}}
\newcommand{\pic}{{\rm Pic}}
\newcommand{\bt}{{\mathbf t}}
\newcommand{\bZ}{{\mathbb{Z}}}
\newcommand{\bQ}{{\mathbb{Q}}}
\author{J\'anos Nagy}
\address{Rényi Alfréd Institute of Mathematics,  Budapest, Hungary}
\email{janomo4@gmail.com}
\title{Hyperelliptic involutions on generic normal surface singularities}
\begin{document}

\keywords{normal surface singularities, links of singularities,
plumbing graphs, rational homology spheres,  Poincar\'e series, generic singularities, Hyperelliptic involutions, periodic constant}

\subjclass[2010]{Primary. 32S05, 32S25, 32S50
Secondary. 14Bxx}

\begin{abstract}

In the classical case of irreducible smooth algebraic curves every genus $2$ curve is hyperelliptic, or in other words there is a complete linear series
$g_2^1$ on them.
On the other hand if $g > 2$, then a generic smooth curve of genus $2$ is nonhyperelliptic.

In this article we investigate the situation of normal surface singularities, so we fix a resolution graph $\mathcal{T}$  and a generic singularity with resolution $\tX$
corresponding to it in the sense of \cite{NNII}. 
We consider an integer effective cycle $Z$ on the resolution $\tX$ and investigate the existence of a complete linear series $g_2^1$ on it.
The article has the main motivation that we will use heavily the results in it to compute the class of the image varieties of Abel maps
in a following manuscript.
\end{abstract}

\maketitle

\linespread{1.2}


\pagestyle{myheadings} \markboth{{\normalsize  J. Nagy}} {{\normalsize Line bundles}}


\section{Introduction}

In the classical case of irreducible smooth algebraic curves every genus $2$ curve is hyperelliptic, or in other words there is a complete linear series
$g_2^1$ on them. On the other hand if $g > 2$, then a generic smooth curve of genus $2$ is not hyperelliptic.

In this article we investigate the situation of normal surface singularities, so we fix a resolution graph $\mathcal{T}$  and a generic singularity with resolution $\tX$
corresponding to it in the sense of \cite{NNII}. Let the reduced exceptional divisor of $\tX$ be $E$. We consider an integer effective cycle $Z \geq E$ on the resolution $\tX$ and investigate the existence of a complete linear series $g_2^1$ on it, we prove the following two main theorems:

\begin{theorem*}\textbf{A}
Suppose that we have an arbitrary rational homology sphere resolution graph $\mathcal{T}$ and a generic resolution $\tX$ corresponding to it, and an effective integer cycle $Z \geq E$, such that $H^0(\calO_Z(K+Z))_{reg} \neq \emptyset$. 
Let us have two vertices of the resolution graph $u', u''$. 
Suppose that $Z_{u'} = Z_{u''} = 1$ and assume that $h^1(\calO_Z) - h^1(\calO_{Z - Z_{u'} \cdot E_{u'}}) \geq 3$.

With these conditions, for every line bundle $\calL \in \im(c^{-E_{u'}^* - E_{u''}^*}(Z))$ one has $h^0(Z, \calL) = 1$.
\end{theorem*}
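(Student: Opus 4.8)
Set $l':=-E_{u'}^*-E_{u''}^*$, so that $(l',Z)=Z_{u'}+Z_{u''}=2$, the space $\eca^{l'}(Z)$ is non-empty of dimension $2$, and every $\calL\in\im(c^{l'}(Z))$ has the form $\calL=\calO_Z(D)$ for an effective Cartier divisor $D$ of degree $2$ on $Z$; for generic such $D$ one point of $D$ lies on $E_{u'}$ and one on $E_{u''}$, with degenerations when $u'=u''$ or $u'$ and $u''$ are adjacent. Since $\calO_Z(D)$ carries the section vanishing along $D$, $h^0(Z,\calL)\geq 1$, and the point is to prove $h^0(Z,\calL)\leq 1$. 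By Riemann--Roch on $Z$ one has $h^0(Z,\calL)-h^1(Z,\calL)=\chi(\calO_Z)+(l',Z)=3-h^1(\calO_Z)$, and by Serre duality on the Gorenstein curve $Z$ one has $h^1(Z,\calL)=h^0\bigl(Z,\calO_Z(K+Z-D)\bigr)$ and $h^1(\calO_Z)=h^0\bigl(\calO_Z(K+Z)\bigr)$. Hence the statement is \emph{equivalent} to
\[
h^0\bigl(Z,\calO_Z(K+Z-D)\bigr)=h^0\bigl(Z,\calO_Z(K+Z)\bigr)-2\qquad\text{for every }D\in\eca^{l'}(Z),
\]
i.e.\ to the assertion that every $D\in\eca^{l'}(Z)$ imposes two independent conditions on $|\calO_Z(K+Z)|$. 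The virtue of this reformulation is that it concerns \emph{all} of $\eca^{l'}(Z)$ at once, so it gives the conclusion for \emph{every} $\calL$ in the image with no semicontinuity argument.

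The plan is to feed the two hypotheses into this positivity statement. First I would rewrite $h^1(\calO_Z)-h^1(\calO_{Z-E_{u'}})\geq 3$: from the exact sequence $0\to\calO_{E_{u'}}\bigl(-(Z-E_{u'})\bigr)\to\calO_Z\to\calO_{Z-E_{u'}}\to 0$ (valid since $Z_{u'}=1$) together with Serre duality on $E_{u'}\cong\mathbb{P}^1$, this difference equals $h^0\bigl(E_{u'},\calO_Z(K+Z)|_{E_{u'}}\bigr)$ up to a non-negative correction coming from possible disconnectedness of $Z-E_{u'}$; hence $h^0\bigl(E_{u'},\calO_Z(K+Z)|_{E_{u'}}\bigr)\geq 3$, so $\deg\bigl(\calO_Z(K+Z)|_{E_{u'}}\bigr)\geq 2$. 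Invoking the genericity of $\tX$ — the standard fact that for the generic analytic structure the restriction $H^0(Z,\calO_Z(K+Z))\to H^0(E_{u'},\calO_Z(K+Z)|_{E_{u'}})$ is as surjective as the numerics allow, so that $|\calO_Z(K+Z)|$ has no base points on $E_{u'}$ away from the nodes — the linear system cut out on $E_{u'}$ is base point free and, having degree $\geq 2$, separates points and tangent vectors on $E_{u'}^{\mathrm{sm}}$. This already settles every $D$ both of whose points lie on $E_{u'}$ (the case $u'=u''$, and the divisors concentrated at the node when $u'\sim u''$), and it disposes of the $E_{u'}$-point in the mixed case.

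Second, the hypothesis $H^0(\calO_Z(K+Z))_{reg}\neq\emptyset$ produces a section of $\calO_Z(K+Z)$ not vanishing identically on any $E_v$, so $\deg\bigl(\calO_Z(K+Z)|_{E_v}\bigr)\geq 0$ for every $v$, in particular for $u''$. Invoking the genericity of $\tX$ once more — that the base locus of $|\calO_Z(K+Z)|$ meets $E_{u''}$ only at nodes, and that for the generic analytic structure $|\calO_Z(K+Z)|$ does not identify a smooth point of $E_{u''}$ with a point of $E_{u'}$ (nor, in case $\calO_Z(K+Z)|_{E_{u''}}$ has degree $0$, contract $E_{u''}$ to a point of the image of $E_{u'}$) — one obtains that the $E_{u''}$-point of $D$ imposes a condition independent of the one coming from the $E_{u'}$-point. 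Together with the previous paragraph this yields the two required independent conditions for every $D\in\eca^{l'}(Z)$, hence $h^0(Z,\calL)=1$ for all $\calL\in\im(c^{l'}(Z))$.

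The main obstacle is precisely this separation statement: proving, \emph{uniformly over all} $D$ rather than for a generic one, that a point on $E_{u'}^{\mathrm{sm}}$ and a point on $E_{u''}^{\mathrm{sm}}$ impose independent conditions on $|\calO_Z(K+Z)|$. This forces a case analysis — notably $u'=u''$, $u'$ adjacent to $u''$, and the possibility that $\calO_Z(K+Z)$ contracts $E_{u''}$ — and it rests on the sharp description, for the generic analytic structure, of the restriction maps $H^0(Z,\calO_Z(K+Z))\to H^0(E_v,\calO_Z(K+Z)|_{E_v})$ and of the base loci of $|\calO_Z(K+Z)|$ on the exceptional curves from \cite{NNII}. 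For the generic $\calL$ alone one can alternatively argue via $\dim\im(c^{l'}(Z))=2$: by additivity of the Abel map along $l'=(-E_{u'}^*)+(-E_{u''}^*)$, using that $c^{-E_{u'}^*}(Z)$ is non-constant since $h^1(\calO_Z)-h^1(\calO_{Z-E_{u'}})\geq 1$, that $c^{-E_{u''}^*}(Z)$ is non-constant since $H^0(\calO_Z(K+Z))_{reg}\neq\emptyset$ forces $h^1(\calO_Z)-h^1(\calO_{Z-E_{u''}})\geq 1$, and that the two image curves span independent directions in $\pic^{l'}(Z)$; then $h^0=3-\dim\im(c^{l'}(Z))=1$ for generic $\calL$. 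Upgrading this to all $\calL$ still requires the separation statement, so the Serre-dual reformulation is the more economical route.
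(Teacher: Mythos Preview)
Your Serre-dual reformulation is correct and is indeed the starting point of the paper's own argument: the question is exactly whether every $D\in\eca^{-E_{u'}^*-E_{u''}^*}(Z)$ imposes two independent conditions on $|\calO_Z(K+Z)|$, i.e.\ whether the canonical map $\eta\colon E_{u'}\cup E_{u''}\to\bP(H^0(\calO_Z(K+Z))^*)$ separates every pair $(p,q)$ with $p\in E_{u'}$, $q\in E_{u''}$. The gap is that you then \emph{assume} this separation as a consequence of ``genericity of $\tX$'', and that is precisely the content of the theorem. The paper's Theorem~C shows that for generic $\tX$ the separation \emph{fails} whenever $0<e_Z(u')=e_Z(u'')=e_Z(u',u'')\le 2$: in that regime $\eta(E_{u'})$ and $\eta(E_{u''})$ both land in the same projective line $\bP(V_Z(u'))=\bP(V_Z(u''))$ and necessarily meet. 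So the sentence ``for the generic analytic structure $|\calO_Z(K+Z)|$ does not identify a smooth point of $E_{u''}$ with a point of $E_{u'}$'' is false without the hypothesis $e_Z(u')\ge 3$, and cannot be obtained by invoking \cite{NNII} as a black box: the genericity results there control $h^i$ of \emph{natural} line bundles, not the non-existence of special $\calL\in\pic^{-E_{u'}^*-E_{u''}^*}(Z)$ with $h^0=2$.

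Your argument uses $e_Z(u')\ge 3$ only to conclude that $\eta|_{E_{u'}}$ is an embedding, but that is not where the difficulty lies. Even when $\eta|_{E_{u'}}$ and $\eta|_{E_{u''}}$ are both embeddings, one must rule out that the two image curves coincide (equivalently, that $\overline{\im(c^{-E_{u'}^*}(Z))}$ and $\overline{\im(c^{-E_{u''}^*}(Z))}$ are centrally symmetric to each other in $H^1(\calO_Z)$). The paper does this by contradiction through a long chain: first the symmetry forces $V_Z(u')=V_Z(u'')$ and both $u',u''$ to be end vertices (Lemma~\ref{endvertexs}); then an analysis of $|\calO_Z(K+Z)|$ on $E_{u'},E_{u''}$ pins down $e_Z(u')=Z_{w'}-1=Z_{w''}-1$ (Lemma~\ref{differentialpos}); then a recursive descent through cohomology cycles reduces to cycles $Z',Z''$ with $e_{Z'}(u')=3$, $e_{Z''}(u')=2$ (Lemma~\ref{cyclesrecursive}); the symmetric line bundle on $Z''$ is identified explicitly (Lemma~\ref{relabelconst}); and finally a delicate relative-dominance computation (Proposition~\ref{relgenericcontr}) shows that the pair $(-E_{u'}^*-E_{u''}^*,\calL_s)$ is relatively dominant on $Z'$, contradicting the existence of the symmetric line bundle one level up. None of this structure is visible from the positivity heuristics in your plan, and there is no shortcut via ``restriction maps are as surjective as numerics allow''.
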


\begin{remark}
1) The condition $H^0(\calO_Z(K+Z))_{reg} \neq \emptyset$ is natural in the sense that  if $H^0(\calO_Z(K+Z))_{reg} = \emptyset$ 
and $0 \leq Z' < Z$ is the cohomological cycle of $Z$ ($Z'$ is the least cycle, such that $H^0(\calO_Z(K+Z)) = H^0(\calO_{Z'}(K+Z'))$) and if $Z'_{u'} = Z'_{u''} = 1$ holds
then for every line bundle $\calL \in \im(c^{-E_{u'}^* - E_{u''}^*}(Z))$ one has $h^0(Z, \calL) = h^0(Z', \calL| Z')$ and $h^1(Z, \calL) = h^1(Z', \calL| Z')$.

2) The condition $h^1(\calO_Z) - h^1(\calO_{Z - Z_{u'} \cdot E_{u'}}) \geq 3$ is equivalent to the condition $e_Z(u') \geq 3$ in the text since $e_Z(u') = h^1(\calO_Z) - h^1(\calO_{Z - Z_{u'} \cdot E_{u'}})$.
We use here this seemingly more complicated version, since we explain the terminology of $e_Z(u')$ later.
\end{remark}

\begin{theorem*}\textbf{B}
Let us have a rational homology sphere resolution graph $\mathcal{T}$ and a generic resolution $\tX$ corresponding to it, and an effective integer cycle $Z \geq E$, such that 
$H^0(\calO_Z(K+Z))_{reg} \neq \emptyset$.

Let us have a vertex of the resolution graph $u$.
Suppose that $Z_{u} = 1$ and assume that $h^1(\calO_Z) - h^1(\calO_{Z - Z_{u} \cdot E_{u}}) \geq 3$.

With these conditions for every line bundle $\calL \in \im(c^{-2E_{u}^*}(Z))$ one has $h^0(Z, \calL) = 1$.
\end{theorem*}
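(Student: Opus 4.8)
The plan is to reduce Theorem B to Theorem A by a degeneration/limit argument on the choice of the second vertex. Theorem A handles line bundles in the image of $c^{-E_{u'}^* - E_{u''}^*}(Z)$ for two distinct vertices $u', u''$, and the target of Theorem B is the image of $c^{-2E_u^*}(Z)$, i.e. the ``diagonal'' case $u' = u'' = u$. So the first step is to set $u' = u$ and analyze what happens as $u''$ is allowed to specialize to $u$, or more precisely, to relate the Abel map $c^{-2E_u^*}$ directly to a one-parameter family of cut systems concentrated at $u$.

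First I would recall the structure of Abel maps on $Z$: a line bundle $\calL \in \im(c^{-2E_u^*}(Z))$ is, by definition, $\calO_Z(-D)$ for an effective Cartier divisor $D$ on $Z$ supported on the $E_u$-component with $D \cdot E_v^* $ recording the class $-2E_u^*$; since $Z_u = 1$, such a $D$ lives on a smooth rational curve (a copy of $E_u$ inside $Z$ near that component) and consists of two points (or one point with multiplicity two) on it. Then I would set up the exact sequence relating $\calO_Z(-D)$ to $\calO_{Z - E_u}$ and $\calO_{E_u}(-D)$, exactly as in the proof of Theorem A, and use the hypothesis $e_Z(u) = h^1(\calO_Z) - h^1(\calO_{Z-E_u}) \geq 3$. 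The key numerical input is that the ``defect'' governing $h^0(Z,\calL)$ for $\calL$ generic in the image is controlled by $e_Z(u)$, and the genericity of $\tX$ (in the sense of \cite{NNII}) forces the relevant $h^1$ jump to be as large as the combinatorics permit. Concretely, I expect $h^0(Z,\calL) = h^0(\calO_Z) - (\text{codimension of the image}) = 1$ once $e_Z(u) \geq 3$, because the image of $c^{-2E_u^*}(Z)$ has dimension $2$ and, by the genericity, $h^1$ drops by exactly $2$ along it while $\chi$ accounts for the rest, leaving $h^0 = 1$ rather than $h^0 = 2$ (the value forced by a $g^1_2$).

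The main obstacle will be handling the boundary/degenerate divisors: when the two points of $D$ collide, $\calO_{E_u}(-D)$ becomes a line bundle with a length-two non-reduced structure, and one must check that the cohomological computation (in particular the surjectivity of the relevant restriction or coboundary map, which is where genericity of $\tX$ enters) is uniform across the whole image, including this diagonal. I would address this by a semicontinuity argument: $h^0(Z,\calL) \geq 1$ always (the constant functions, or rather $h^0 \geq \chi$ plus the bound from the image dimension), and $h^0(Z,\calL) \leq 1$ on a dense open subset by Theorem A applied with $u'' \neq u'$ and a continuity/specialization of the two-point divisors; then upper semicontinuity of $h^0$ in flat families of sheaves would force $h^0(Z,\calL) \leq 1$ on the closed diagonal locus as well — provided the family of line bundles $\im(c^{-2E_u^*}(Z))$ is realized inside the closure of the family $\bigcup_{u''} \im(c^{-E_u^* - E_{u''}^*}(Z))$, which is the compatibility I would need to verify carefully using the description of Abel maps and their images from \cite{NNII}. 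If that closure relation is not literally available, the fallback is to rerun the exact-sequence argument of Theorem A verbatim with $u' = u'' = u$, checking at each step that the proof used only $e_Z(u) \geq 3$ and not the distinctness of the vertices; I expect this direct route to work, with the collision of points being the only place needing an extra line of justification.
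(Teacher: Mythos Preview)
Your proposal has a genuine gap in both branches.

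\textbf{The semicontinuity/limit branch is backwards and ill-posed.} First, the vertices $u''$ form a discrete set, and for each $u''\neq u$ the image $\im(c^{-E_u^*-E_{u''}^*}(Z))$ sits in a \emph{different} component $\pic^{-E_u^*-E_{u''}^*}(Z)$ of the Picard group, so there is no continuous family in which one can ``let $u''\to u$''. Second, even if you had such a family, upper semicontinuity of $h^0$ says that $\{h^0\geq 2\}$ is \emph{closed}, not that it is empty: knowing $h^0=1$ on a dense open would not prevent $h^0=2$ at the special diagonal locus. So this route cannot conclude anything.

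\textbf{The fallback ``just rerun Theorem A with $u'=u''=u$'' underestimates the difficulty.} The paper's proof of Theorem A uses the distinctness of $u'$ and $u''$ in an essential way: via the canonical map $\eta$ it forces both vertices to be \emph{end vertices} (Lemma \ref{endvertexs}), which then pins down the single neighbours $w',w''$ and drives the whole inductive structure (Lemma \ref{cyclesrecursive}). When $u'=u''=u$, this step simply fails: $u$ need not be an end vertex, and the involution $f:E_u\to E_u$ with $f^2=\mathrm{Id}$ replaces the bijection $E_{u'}\to E_{u''}$, making $\eta|_{E_u}$ a $2$-fold branched cover rather than matching two distinct curves. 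The paper's actual proof of Theorem B (Theorem \ref{onevertex}) therefore introduces a new structural lemma (Lemma \ref{cohom}) showing that all neighbour-coefficients $Z'_{u_{n_j}}$ are even and that $e_{Z'}(u)=\tfrac{1}{2}(Z'-E_u,E_u)$, and then splits into \emph{three cases} according to how many neighbours $u$ has in the reduced cycle $Z'$ (one neighbour with coefficient $6$; two neighbours with coefficients $4,2$; three neighbours each with coefficient $2$). Each case requires its own contradiction, the last two by varying the intersection points $I,I'$ on $E_u$ while fixing the rest of the analytic structure. None of this is visible if one merely sets $u'=u''$ in the Theorem A argument; the ``extra line of justification'' you anticipate for colliding points is in fact several pages of new case analysis.
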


We also show that the seemingly unnatural conditions in the two theorems $h^1(\calO_Z) - h^1(\calO_{Z - Z_{u'} \cdot E_{u'}}) \geq 3$ and $h^1(\calO_Z) - h^1(\calO_{Z - Z_{u} \cdot E_{u}}) \geq 3$ are crucial.

More explicitly we show the following:

\begin{theorem*}\textbf{C}
 Let us have a rational homology sphere resolution graph $\mathcal{T}$ and a generic resolution $\tX$ corresponding to it.
Consider an effective integer cycle $Z \geq E$ and two vertices $u', u'' \in |Z|$ such that 
$H^0(\calO_Z(K+Z))_{reg} \neq \emptyset$, $Z_{u'} = Z_{u''} = 1$.

Suppose futhermore that
$$0 < h^1(\calO_Z) - h^1(\calO_{Z - Z_{u'} \cdot E_{u'}})  = h^1(\calO_Z) - h^1(\calO_{Z - Z_{u''} \cdot E_{u'}}) = h^1(\calO_Z) - h^1(\calO_{Z - Z_{u''} \cdot E_{u'} - Z_{u''} \cdot E_{u'} })  \leq 2.$$

With these conditions there exists a line bundle $\calL \in \im(c^{-E_{u'}^* - E_{u''}^*}(Z))$, such that $h^0(Z, \calL) = 2$.
\end{theorem*}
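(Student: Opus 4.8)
The plan is to show that Theorem C is, in a sense, the sharpness counterpart of Theorems A and B, and to prove it by explicitly producing a section that forces $h^0(Z,\calL)=2$. First I would recall the structure of the Abel map $c^{l'}(Z)\colon \eca^{l'}(Z)\to \pic^{l'}(Z)$ and the description of a generic line bundle in its image from \cite{NNII}: for $l'=-E_{u'}^*-E_{u''}^*$ a generic $\calL\in\im(c^{l'}(Z))$ is realized by an effective Cartier divisor $D$ supported at two generic points $p'\in E_{u'}^o$ and $p''\in E_{u''}^o$, and $\calO_Z(-D)$ has a canonical section vanishing exactly on $D$, so $h^0(Z,\calL)\geq 1$ automatically. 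The content is to produce a \emph{second}, independent section; equivalently, using the exact sequence $0\to H^0(\calO_Z(-Z_{u'}E_{u'})\otimes\calL)\to H^0(\calL)\to H^0(\calO_{Z_{u'}E_{u'}}\otimes\calL)$ and the reduction to smaller cycles, to show that the coboundary maps drop rank by exactly the amount permitted when $e_Z(u')=e_Z(u'')\le 2$.

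The key steps, in order, would be: (1) reduce to the ``cohomological'' situation where $H^0(\calO_Z(K+Z))_{reg}\ne\emptyset$ using Remark 1.3(1), so that the relevant $h^1$'s are computed by the combinatorics of the generic singularity rather than by a genuinely larger cycle; (2) invoke the semicontinuity / dimension-count machinery governing Abel maps — specifically the formula relating $\dim\im(c^{l'}(Z))$, $h^1(\calL)$ and the codimension of the image, together with the ``$e_Z(u)$ is the drop of $h^1$ upon removing $E_u$'' interpretation in Remark 1.3(2) — to translate the hypothesis $e_Z(u')=e_Z(u'')=e_Z(\text{both})\le 2$ into a statement that the natural ``evaluation at $p'$ and $p''$'' map on $H^0(\calO_Z(K+Z))$ is \emph{not} surjective onto the two-dimensional target; (3) dualize via Serre-type duality on $Z$ (the pairing $H^0(\calO_Z(K+Z))\times H^1(\calO_Z)\to\bC$, and its twisted version for $\calL$) to convert that non-surjectivity into the existence of $\calL\in\im(c^{l'}(Z))$ with $h^1(\calL)=h^1(\calO_Z)-2$ rather than the generic value $h^1(\calO_Z)-? $, and then read off $h^0(Z,\calL)=\chi(\calL)+h^1(\calL)$, using $\chi$ is a topological (degree) quantity, to get exactly $2$; (4) check the equalities among the three displayed $h^1$-differences are exactly what is needed so that the ``excess'' sections at $u'$ and $u''$ are the \emph{same} section of $\calO_Z$, giving $h^0=2$ and not $h^0\ge 3$ (the latter being excluded by Theorem A/B only under the $\ge 3$ hypothesis, which we are negating).

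Concretely, I expect to build the second section by hand: take $\calL=\calO_Z(-p'-p'')$ for \emph{generic} $p',p''$; since $e_Z(u')\le 2$ there is, by the structure theory of generic singularities in \cite{NNII}, a nonzero $s\in H^0(\calO_Z)$ that, restricted near $E_{u'}$, vanishes to first order, and symmetrically at $u''$; the equality of the $h^1$-differences ensures these two phenomena are compatible and the product/combination lies in $H^0(Z,\calL)$ and is independent of the tautological section. So the real mechanism is: the failure of a certain ``generic vanishing'' or ``injectivity of a cohomology map'' that holds when $e_Z\ge 3$ but fails when $e_Z\le 2$, and this failure is precisely a $g^1_2$.

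The main obstacle will be step (3)–(4): making the duality bookkeeping precise enough to pin down $h^1(\calL)$ \emph{exactly} (not just an inequality) for a suitably generic $\calL$ in the image, and simultaneously verifying that the two ``excess directions'' coming from $u'$ and $u''$ coincide rather than add — i.e.\ that we land on $h^0=2$ on the nose. This is where the three-term chain of equalities in the hypothesis does the work, and where I would need to be careful about which cycle ($Z$, $Z-E_{u'}$, $Z-E_{u'}-E_{u''}$) each $h^1$ is computed on and how the Abel-map image behaves under these reductions (a Laufer-type computation sequence argument, in the spirit of the results quoted from \cite{NNII}, should close the gap).
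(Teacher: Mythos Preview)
Your proposal circles around the right duality idea but misses the short geometric argument the paper actually uses, and your ``concrete'' construction contains an error.

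The paper's proof is a few lines. The hypothesis $e_Z(u')=e_Z(u'')=e_Z(u',u'')$ says exactly that the linear subspaces $V_Z(u')$ and $V_Z(u'')$ of $H^1(\calO_Z)$ coincide; the bound $\leq 2$ says this common subspace has dimension at most $2$. Now use the canonical map $\eta\colon E_{u'}\cup E_{u''}\to \bP(H^0(\calO_Z(K+Z))^*)\cong \bP(H^1(\calO_Z))$, which is well-defined because $\calO_Z(K+Z)$ has no base points on $E_{u'},E_{u''}$. One checks $\eta(E_{u'})\subset \bP(V_Z(u'))$ and $\eta(E_{u''})\subset \bP(V_Z(u''))$, so both rational curves land in the \emph{same} projective line (or point). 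Hence for generic $p\in E_{u'}$ there is $q\in E_{u''}$ with $\eta(p)=\eta(q)$, i.e.\ $H^0(\calO_Z(K+Z-p))=H^0(\calO_Z(K+Z-p-q))$. Serre duality plus Riemann--Roch then gives $h^0(\calO_Z(p+q))=2$. That is the whole proof; no Laufer sequences, no semicontinuity machinery, no relative Abel maps.

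Two specific problems with your plan. First, your hand-built second section via ``$s\in H^0(\calO_Z)$ vanishing to first order near $E_{u'}$'' cannot exist: under $H^0(\calO_Z(K+Z))_{reg}\neq\emptyset$ one has $h^0(\calO_Z)=1$, so $H^0(\calO_Z)$ is just constants. Second, your step (3) has the sign backwards: for generic $p,q$ one gets $h^1(\calL)=h^1(\calO_Z)-2$ and hence $h^0(\calL)=1$; what you need is a \emph{special} pair $(p,q)$ for which $h^1(\calL)=h^1(\calO_Z)-1$, i.e.\ larger than generic, not smaller. The mechanism producing that special pair is precisely the coincidence $\eta(p)=\eta(q)$, and the reason such a pair exists is the equality $V_Z(u')=V_Z(u'')$ together with $\dim\leq 2$ --- which is the one geometric fact your outline never isolates.
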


\medskip

In the following we wish to give the motivation for these results (since this is only a motivational part we do not define everything here, for the corresponding definitions see the pleriminary section):

\bigskip

In the classical case of a genus $g$ smooth curve $C$, Brill-Noether theory investigates the structure of the Brill-Noether strata $W_d^r = \{\calL \in \pic^{d}(C) \ |\  h^0(C, \calL) \geq r+1\}$ 
among the generic curves or among other special families of algebraic curves. The classical Brill-Noether's theorem \cite{ACGH} states that for a genus $g$ generic curve $C$ and numbers $d \geq 1$ the variety $W_d^r$ is nonempty if and only if $g \geq (r+1)(g-d+r)$.  
This means that the maximal $h^0$ of a line bundle in $\pic^{d}(C)$ is $r+1$, where $r$ is the maximal integer such that $g \geq (r+1)(g-d+r)$. 

Also if $C$ is a generic genus $g$ smooth curve then if $d \geq 1$ and $r \geq 0$ and $g \geq (r+1)(g-d+r)$, then $\dim(W_d^r) = g - (r+1)(g-d+r)$. 

It means that the dimension of Brill-Noether stratas is known if the curve $C$ is generic.

In \cite{NNAD} the author and A. Némethi studied the image varieties of Abel maps in the corresponding Picard groups focusing mostly on the dimension of these varieties.
In \cite{NNAD} these dimensions are computed algorithmically from analytic invariants of the singularity, like cohomology numbers of cycles, giving also explicit combinatorial formulae from the resolution graph, when the analytic type is generic.

The reason of our interest in these image varieties is that these are irreducible components of Brill-Noether stratas in the corresponding Picard groups with the value of $h^1$ equal to their
codimension (see \cite{NNI}). In the case of normal surface singularities these dimensions are already interesting invariants which vary if we move the analytic type of the singularity.

In a following manuscript we compute the class of the image varieties of Abel maps in the case of generic singularities, where we will heavily use the results of this paper.

\medskip

Let us sketch briefly what is the significance of Theorem\textbf{A} in the computation of the class of images of Abel maps:

Let us have a rational homology sphere resolution graph $\mathcal{T}$ and a generic resolution $\tX$ corresponding to it, and an effective integer cycle $Z \geq E$, such that 
$H^0(\calO_Z(K+Z))_{reg} \neq \emptyset$.

Assume that we have a Chern class $l'$ such that $(l', E_v) \geq 0$ for every vertices $v \in \calv$ (that is $-l'$ is in the Lipman cone). Assume furthermore that for every cycle 
$0 \leq Z' \leq Z$  we have $h^1(\calO_{Z'}) - \dim(\im(c^{l'}(Z'))) < h^1(\calO_{Z}) - \dim(\im(c^{l'}(Z)))$ for the Abel maps $c^{l'}(Z'), c^{l'}(Z)$ (This means in other words that the
image variety of the Abel map $c^{l'}(Z)$ is not the direct product of an affine space and the image variety of an Abel map $c^{l'}(Z')$ corresponding to a smaller cycle $Z'$).

Let us look at the divisors of the line bundle $\calO_Z(K+Z)$, for a vertex $u \in \calv$ let us write $t_u = (K + Z, Z) \geq 0$.

We will prove that the line bundle $\calO_Z(K+Z)$ has no got fixed components, has not got base points at the intersection points of exceptional divisors and has not got base points
on exceptional divisors $E_u$, where $(l', E_u) > 0$.

Furthermore let us have a generic section $s \in H^0(\calO_Z(K+Z))_{reg}$ such that the divisor $D = |S|$ consists of disjoint smooth transversal cuts to the exceptional divisors,
let us write $D  = \sum_{u \in \calv, 1 \leq i \leq t_u} D_{u, i}$. Let us fix a small open neigborhood $s \in U \subset H^0(\calO_Z(K+Z))_{reg}$ and for $t \in U$ let us write similarly $D  = \sum_{u \in \calv, 1 \leq i \leq t_u} D_{u, i, t}$.

Let us count how many different ways  we can choose indices $1 \leq j_{u, 1}, \cdots, \leq j_{u, (l', E_u)} \leq t_u$ for the vertices $u \in \calv | (l', E_u) > 0$ such that
$\sum_{u \in |l'|^*, 1 \leq i \leq (l', E_u)} D_{u, j_{u, i}, t}$ is a generic divisor in $\eca^{l'}(Z)$ if $t$ is a generic element in $U$.

Notice that this number is at most $\prod_{u \in |l'|^*} {t_u \choose (l', E_u)}$, we will show that this number is exactly the class of the image variety of the Abel map $\im(c^{l'}(Z'))$.

We will actually prove that in fact there is equality here, so for every choice of indices $1 \leq j_{u, 1}, \cdots, \leq j_{u, (l', E_u)} \leq t_u$ $\sum_{u \in |l'|^*, 1 \leq i \leq (l', E_u)} D_{u, j_{u, i}, t}$ is a generic divisor in $\eca^{l'}(Z)$ if $t$ is a generic element in $U$.

It turns out that the easiest obstacle for this statement would be a suitable counterexapmle for Theorem\textbf{A}.

Indeed assume that there are two vertices $u, v \in  |l'|^*$ such that $Z_u = Z_v = 1$ and there is a line bundle $\calL \in \pic^{-E_u^* - E_v^*}(Z)$ such that $h^0(Z, \calL) = 2$.

We will show in Lemma\ref{differentialpos} that it gives an open subset $U' \subset E_u$ and an injective dominant map $f: E_u \to E_v$ such that if $s \in H^0(\calO_Z(K+Z))_{reg}$,
then for some $x \in U$ we have $x \in |s|$ if and only if $f(x) \in |s|$.

On the other hand it would definitely mean that there are indices $1 \leq i \leq t_u$, $1 \leq j \leq t_v$ such that $D_{u, i, t} + D_{v, j, t}$ is not a generic divisor in $\eca^{-E_u^* - E_v^*}(Z)$ if $t$ is a generic element in $U$ and then our statement about the class of the image variety of the Abel map $\im(c^{l'}(Z'))$ could not hold.

\bigskip

The structure of the paper will be the following:

In section 2) we summarise the necessary background on normal surface singularities. In section 3) we recall the necessary definitions and results about effective Cartier divisors and Abel maps from \cite{NNI}.  In section 4) we recall our working definition about generic normal surface singularities and the main cohomological results from \cite{NNII}. In section 5) we recall from \cite{R} the results about relatively generic analytic structures on normal surface singularities.  In section 6) we prove some results about base points of canonical line bundles on cycles of generic analytic type.  In section 7) we give an outline of the proof of Theorem\textbf{A}, including the main ideas without going into the technical details.  In section 8), section 9) and section 10) we prove Theorem\textbf{A}, Theorem\textbf{B} and Theorem\textbf{C}.

\section{Preliminaries}\label{s:prel}

\subsection{The resolution}\label{ss:notation}
Let $(X,o)$ be the germ of a complex analytic normal surface singularity,
 and let us fix  a good resolution  $\phi:\widetilde{X}\to X$ of $(X,o)$.
We denote the exceptional curve $\phi^{-1}(0)$ by $E$, and let $\{E_v\}_{v\in\calv}$ be
its irreducible components. Set also $E_I:=\sum_{v\in I}E_v$ for any subset $I\subset \calv$.
For the cycle $l=\sum n_vE_v$ let its support be $|l|=\cup_{n_v\not=0}E_v$.
Mixing the two notations we will use $E_{|l|} = \sum_{v\in |l|}E_v$ for an arbitrary cycle $l$.
For more details see \cite{Nfive}.

\subsection{Topological invariants}\label{ss:topol}
Let $\calt$ be the dual resolution graph
associated with $\phi$;  it  is a connected graph.
Then $M:=\partial \widetilde{X}$ can be identified with the link of $(X,o)$, it is 
an oriented  plumbed 3--manifold associated with $\calt$.
We will assume that  \emph{$M$ is a rational homology sphere},
or, equivalently,  $\mathcal{T}$ is a tree and all genus
decorations of $\mathcal{T}$ are zero. We use the same
notation $\mathcal{V}$ for the set of vertices of $\calt$, and let $\delta_v$ be the valency of a vertex $v$.

$L:=H_2(\widetilde{X},\mathbb{Z})$, endowed
with the negative definite intersection form  $I=(\,,\,)$, is a lattice. It is
freely generated by the classes of 2--spheres $\{E_v\}_{v\in\mathcal{V}}$. The elements $l=\sum n_vE_v \in L$ are called \emph{cycles} and we define their $E$-support by $|l|=\cup_{n_v\not=0}E_v$. 
 Then $L':= H^2(\widetilde{X},\mathbb{Z})$ is generated
by the (anti)dual classes $\{E^*_v\}_{v\in\mathcal{V}}$ defined
by $(E^{*}_{v},E_{w})=-\delta_{vw}$, the opposite of the Kronecker symbol.
The intersection form embeds $L$ into $L'$. Then $H_1(M,\mathbb{Z})\simeq L'/L$, abridged by $H$.
Usually one also identifies $L'$ with those rational cycles $l'\in L\otimes \Q$ for which
$(l',L)\in\Z$, or, $L'={\rm Hom}_\Z(L,\Z)$. 

For $l'_1,l'_2\in L\otimes \Q$ with $l'_i=\sum_v l'_{iv}E_v$ ($i=\{1,2\}$)
one considers a partial ordering $l'_1\geq l'_2$ defined coordinatewise by $l'_{1v}\geq l'_{2v}$
for all $v\in\calv$. In particular,
$l'$ is an effective rational cycle if $l'\geq 0$.

Each class $h\in H=L'/L$ has a unique representative $r_h=\sum_vr_vE_v\in L'$ in the semi-open cube
(i.e. each $r_v\in \bQ\cap [0,1)$), such that its class  $[r_h]$ is $h$.

All the $E_v$--coordinates of any $E^*_u$ are strict positive.
We define the Lipman cone as $\calS':=\{l'\in L'\,:\, (l', E_v)\leq 0 \ \mbox{for all $v$}\}$.
It is generated over $\bZ_{\geq 0}$ by $\{E^*_v\}_v$. We will also introduce the notation $\calS:=\calS'\cap L$.

For more details regarding the above combinatorial package associated with the topology of normal surface singularities we refer to \cite{Nfive}.

\subsection{Analytic invariants}\label{ss:analinv}
\subsubsection{} The group ${\rm Pic}(\widetilde{X})$
of  isomorphism classes of analytic line bundles on $\widetilde{X}$ appears in the (exponential) exact sequence
\begin{equation}\label{eq:PIC}
0\to {\rm Pic}^0(\widetilde{X})\to {\rm Pic}(\widetilde{X})\stackrel{c_1}
{\longrightarrow} L'\to 0, \end{equation}
where  $c_1$ denotes the first Chern class. Here
$ {\rm Pic}^0(\widetilde{X})=H^1(\widetilde{X},\calO_{\widetilde{X}})\simeq
\C^{p_g}$, where $p_g$ is the {\it geometric genus} of
$(X,0)$. $(X,0)$ is called {\it rational} if $p_g(X,0)=0$.
The works of Artin \cite{Artin62,Artin66} characterized rational singularities topologically
via the graphs; such graphs are called `rational'. By this criterion, $\calt$
is rational if and only if $\chi(l)\geq 1$ for any effective non--zero cycle $l\in L_{>0}$.
Here $\chi(l)=-(l,l-Z_K)/2$ is the Riemann-Roch function and $Z_K\in L'$ is the (anti)canonical cycle
identified by adjunction formulae
$(-Z_K+E_v,E_v)+2=0$ for all $v$.

The epimorphism
$c_1$ admits a unique group homomorphism section $l'\mapsto s(l')\in {\rm Pic}(\widetilde{X})$,
 which extends the natural
section $l\mapsto \calO_{\widetilde{X}}(l)$ valid for integral cycles $l\in L$, and
such that $c_1(s(l'))=l'$  \cite{OkumaRat}.
We call $s(l')$ the  {\it natural line bundles} on $\widetilde{X}$ and they will be denoted by $\calO_{\tX}(l')$. 
By  the very  definition, $\calL$ is natural if and only if some power $\calL^{\otimes n}$
of it has the form $\calO_{\tX}(l)$ for some $l\in L$.

\subsubsection{$\mathbf{{Pic}(Z)}$}
Similarly, if $Z\in L_{>0}$ is a non--zero effective integral cycle such that its support is $|Z| =E$,
and $\calO_Z^*$ denotes
the sheaf of units of $\calO_Z$, then ${\rm Pic}(Z)=H^1(Z,\calO_Z^*)$ is  the group of isomorphism classes
of invertible sheaves on $Z$. It appears in the exact sequence
  \begin{equation}\label{eq:PICZ}
0\to {\rm Pic}^0(Z)\to {\rm Pic}(Z)\stackrel{c_1}
{\longrightarrow} L'\to 0, \end{equation}
where ${\rm Pic}^0(Z)=H^1(Z,\calO_Z)$.
If $Z_2\geq Z_1$ then there are natural restriction maps,
${\rm Pic}(\widetilde{X})\to {\rm Pic}(Z_2)\to {\rm Pic}(Z_1)$.
Similar restrictions are defined at  ${\rm Pic}^0$ level too.
These restrictions are homomorphisms of the exact sequences  (\ref{eq:PIC}) and (\ref{eq:PICZ}).

Furthermore, we define a section of (\ref{eq:PICZ}) by
$s_Z(l'):=
{\mathcal O}_{\widetilde{X}}(l')|_{Z}$.
It also satisfies $c_1\circ s_Z={\rm id}_{L'}$. We write  ${\mathcal O}_{Z}(l')$ for $s_Z(l')$, and they are called 
 {\it natural line bundles } on $Z$.

We also use the notations ${\rm Pic}^{l'}(\widetilde{X}):=c_1^{-1}(l')
\subset {\rm Pic}(\widetilde{X})$ and
${\rm Pic}^{l'}(Z):=c_1^{-1}(l')\subset{\rm Pic}(Z)$
respectively. Multiplication by $\calO_{\widetilde{X}}(-l')$, or by
$\calO_Z(-l')$, provides natural affine--space isomorphisms
${\rm Pic}^{l'}(\widetilde{X})\to {\rm Pic}^0(\widetilde{X})$ and
${\rm Pic}^{l'}(Z)\to {\rm Pic}^0(Z)$.

\bekezdes\label{bek:restrnlb} {\bf Restricted natural line bundles.}
The following warning is appropriate.
Note that if $\tX_1$ is a connected small convenient  neighbourhood
of the union of some of the exceptional divisors (hence $\tX_1$ also stays as the resolution
of the singularity obtained by contraction of that union of exceptional  curves), then one can repeat the definition of
natural line bundles at the level of $\tX_1$ as well.

 However, the restriction to $\tX_1$ of a natural line bundle of $\tX$ (even of type
$\calO_{\tX}(l)$ with $l$ integral cycle supported on $E$)  usually is not natural on $\tX_1$:
$\calO_{\tX}(l')|_{\tX_1}\not= \calO_{\tX_1}(R(l'))$
 (where $R:H^2(\tX,\Z)\to H^2(\tX_1,\Z)$ is the natural cohomological 
 restriction), though their Chern classes coincide.

Therefore, in inductive procedure when such restriction is needed,
 we will deal with the family of {\it restricted natural line bundles}. This means the following.
If we have two resolution spaces $\tX_1 \subset \tX$ with resolution graphs $\mathcal{T}_1 \subset \mathcal{T}$ and we have a Chern class $l' \in L'$, then we denote 
$\calO_{\tX_1}(l') = \calO_{\tX}(l') | \tX_1$.

Similarly if $Z$ is an effective integer cycle on $\tX$ with maybe $|Z| \neq E$, then we denote $\calO_{Z}(l') = \calO_{\tX}(l') | Z$. Furthermore if $\calL$ is a line bundle on $\tX_1$, then we denote $\calL(l') = \calL \otimes \calO_{\tX}(l')$.
Similarly if $Z$ is  an effective integer cycle on $\tX$ and $\calL$ is a line bundle on $Z$, then we denote $\calL(l') = \calL \otimes \calO_Z(l')$.

Though the next statement is elementary from \cite{NNII}, it is a key ingredient in several arguments:

\begin{lemma}\label{lem:resNat}\cite{NNII}
With the above notations, the line bundle  $\calO_{\tX_1}(l') \in \pic(\tX_1)$ depends only on its Chern class $l'$ and on the
(non--compact) divisor $E_{top}\cap \tX_1$ of $\tX_1$ and it doesn't depend on the analytic type of the large singularirty $\tX$.
\end{lemma}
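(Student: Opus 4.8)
The plan is to realize the restricted natural line bundle $\calO_{\tX_1}(l')$ as the value of a single group homomorphism $s_1\colon L'\to\pic(\tX_1)$ that is pinned down entirely by data living on $\tX_1$. First, recall that the restriction map $\pic(\tX)\to\pic(\tX_1)$ is a homomorphism of the exact sequences (\ref{eq:PIC}): it intertwines $c_1$ with the cohomological restriction $R\colon L'=H^2(\tX,\Z)\to H^2(\tX_1,\Z)$, i.e. $c_1(\calL|_{\tX_1})=R(c_1(\calL))$, and $R$ depends only on the combinatorics of $\mathcal{T}_1\subset\mathcal{T}$, not on any analytic structure. Composing the unique group-homomorphism section $l'\mapsto\calO_{\tX}(l')$ of $c_1$ over $\tX$ with this restriction yields a group homomorphism
$$s_1\colon L'\longrightarrow\pic(\tX_1),\qquad s_1(l'):=\calO_{\tX}(l')|_{\tX_1}=\calO_{\tX_1}(l'),$$
with $c_1\circ s_1=R$.

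Next I would evaluate $s_1$ on the integral sublattice $L\subset L'$. For $l=\sum_v n_vE_v\in L$ one has $\calO_{\tX}(l)=\calO_{\tX}(D_l)$, the line bundle of the divisor $D_l=\sum_v n_vE_v$, so that $s_1(l)=\calO_{\tX_1}(D_l\cap\tX_1)$ is the line bundle on $\tX_1$ of the divisor $D_l\cap\tX_1=\sum_v n_v\,(E_v\cap\tX_1)$, which is supported on $E_{top}\cap\tX_1$; here $E_v\cap\tX_1$ is the whole $E_v$ when it is a compact curve in $\tX_1$, a union of the relevant non-compact arcs when $E_v$ only partially meets $\tX_1$, and empty otherwise. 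This divisor --- hence the line bundle $s_1(l)$ --- is built purely from $l$ and from the divisor $E_{top}\cap\tX_1\subset\tX_1$, so $s_1|_L$ is intrinsic to $\tX_1$ and in particular does not see the analytic type of the ambient $\tX$.

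Finally I would promote this from $L$ to $L'$ using that $\pic(\tX_1)$ is torsion free. Indeed the link of $\tX_1$, having graph $\mathcal{T}_1$ --- a connected subgraph of the tree $\mathcal{T}$ with vanishing genera --- is again a rational homology sphere, so $H^2(\tX_1,\Z)$ is torsion free; since $\pic^0(\tX_1)=H^1(\tX_1,\calO_{\tX_1})$ is a $\C$-vector space, any element of $\pic(\tX_1)$ killed by a positive integer has zero Chern class and hence is trivial. Now if $\tX$ and $\tX'$ are two analytic structures containing the same $\tX_1$, the associated homomorphisms $s_1,s_1'\colon L'\to\pic(\tX_1)$ agree on $L$ (second step) and induce the same $R$ on Chern classes, so $s_1-s_1'$ maps $L'$ into $\pic^0(\tX_1)$ and factors through the finite group $L'/L$, whence $s_1=s_1'$. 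Equivalently, choosing $m\ge1$ with $ml'\in L$, $s_1(l')$ is the unique $m$-th root in the torsion free group $\pic(\tX_1)$ of the intrinsic bundle $s_1(ml')$. This gives the claim: $\calO_{\tX_1}(l')$ depends only on $l'$ and on $E_{top}\cap\tX_1$.

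The one place that calls for genuine care is the second step --- interpreting the restriction $D_l\cap\tX_1$ of a divisor supported on all of $E$ as an honest (possibly non-compact) divisor of $\tX_1$, and verifying additivity of $l\mapsto\calO_{\tX_1}(D_l\cap\tX_1)$ --- together with the standard torsion-freeness of $\pic(\tX_1)$; beyond that, the argument is the formal transport of the canonical section along the restriction map and uses only (\ref{eq:PIC}), (\ref{eq:PICZ}) and what is recalled from \cite{NNII}.
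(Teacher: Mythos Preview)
The paper does not prove this lemma; it is stated with a citation to \cite{NNII} and the preceding remark that it ``is elementary from \cite{NNII}''. Your argument is correct and is essentially the standard one that underlies the cited result: the restricted section $s_1=s|_{\tX_1}$ is a group homomorphism $L'\to\pic(\tX_1)$ lifting $R$, its values on $L$ are visibly line bundles of divisors supported on $E_{top}\cap\tX_1$, and torsion-freeness of $\pic(\tX_1)$ (from $\pic^0$ being a vector space and $L'_1$ being free) forces any two such homomorphisms to agree on all of $L'$. One minor point: you write that $\mathcal{T}_1$ is ``a connected subgraph'', which matches the paper's setup for this lemma, but note that later applications allow disconnected $\mathcal{T}_1$; your argument goes through unchanged component by component.
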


\bekezdes \label{bek:ansemgr} {\bf The analytic semigroups.} \
By definition, the analytic semigroup associated with the resolution $\tX$ is

\begin{equation}\label{eq:ansemgr}
\calS'_{an}:= \{l'\in L' \,:\,\calO_{\tX}(-l')\ \mbox{has no  fixed components}\}.
\end{equation}
It is a subsemigroup of $\calS'$. One also sets $\calS_{an}:=\calS_{an}'\cap L$, a subsemigroup
of $\calS$. In fact, $\calS_{an}$
consists of the restrictions   ${\rm div}_E(f)$ of the divisors
${\rm div}(f\circ \phi)$ to $E$, where $f$ runs over $\calO_{X,o}$. Therefore, if $s_1, s_2\in \calS_{an}$, then
${\rm min}\{s_1,s_2\}\in \calS_{an}$ as well (take the generic linear combination of the corresponding functions).
In particular,  for any $l\in L$, there exists a {\it unique} minimal
$s\in \calS_{an}$ with $s\geq l$.

Similarly, for any $h\in H=L'/L$ set $\calS'_{an,h}:\{l'\in \calS_{an}\,:\, [l']=h\}$.
Then for any  $s'_1, s'_2\in \calS_{an,h}$ one has
${\rm min}\{s'_1,s'_2\}\in \calS_{an,h}$, and
for any $l'\in L'$   there exists a unique minimal
$s'\in \calS_{an,[l']}$ with $s'\geq l'$.

For any $l'\in\calS_{an}'$ there exists an ideal sheaf $\cali(l')$ with 0--dimensional support along $E$ such that
 $H^0(\tX,\calO_{\tX}(-l'))\cdot \calO_{\widetilde{X}}=\calO_{\widetilde{X}}(-l')\cdot \cali(l')$.

The ideal $\cali(l') $ describes the space of base points of the line bundle $\calO_{\tX}(-l')$.

If $l'\in\calS'_{an}$ and  the divisor of a generic global section of $\calO_{\tX}(-l')$ intersects
$E_v$, then $(l',E_v)<0$. In particular, if $p\in E_v$ is a  base point then necessarily $(l',E_v)<0$.

Choose a base point $p$ of $\calO_{\tX}(-l')$, and assume that it is a regular point of $E$, and that $\cali(l')_p$
 in the
local ring $\calO_{\tX,p}$ is of the form $(x^t,y)$, where $x,y$ are some local coordinates at $p$  with $\{x=0\}=E$ (locally),
and $t\geq 1$.
Then we say that $p$ is a {\it $t$--simple base point}, in such cases we write $t=t(p)$. Furthermore, $p$ is called {\it simple}
if it is $t$--simple for some $t\geq 1$.

Let us have a Chern class $l' \in S'_{an}$ and let us have a base point $p \in E_{v, reg}$ of a natural line $\calO_{\tX}(-l')$, which is simple, there is another interpretation of the positive integer $t$, such that $p$ is $t$-simple.

Let us have a generic section in $s \in H^0(\calO_{\tX}(-l'))$ and $D = |s|$, then we know that $D$ has a cut $D'$, which is transversal at the base point $p$.

Let us blow up the exceptional divisor $E_v$ along the cut $D'$ sequentially, so let's blow up first at the point $p$ and let the new exceptional divisor be $E_{v_1}$ and let us denote
the strict transform of the cut $D'$ with the same notation.
Then let us blow up $E_{v_1}$ at the intersection point $E_{v_1} \cap D'$ and let the new exceptional divisor be $E_{v_2}$ and so on.

Let us denote the given resolution at the $i$-th step by $\tX_i$ with the blow up map $b_i : \tX_i \to \tX$ and let us have the natural line bundle $\calL_i = \calO_{\tX_i}(-b_i^*(l') - \sum_{1 \leq j \leq i} j \cdot E_{v_j}) = \calO_{\tX_i}(D_{st})$, where $D_{st}$ is the strict transform of the divisor $D$.

Let $t$ be the minimal number, such that $\calL_t$ hasn't got a base point along the excpetional divisor $E_{v_t}$.
Equivalently $t$ is the maximal integer, such that $H^0(\tX_t, \calL_t) = H^0(\calO_{\tX_t}( - b_t^*(l')))$ and $h^1(\tX_t, \calL_t)  = h^1(\calO_{\tX}(-l')) + t$. 

In this case $p$ is a $t$-simple base point ot the natural line bundle $\calO_{\tX}(-l')$.

\section{Effective Cartier divisors and Abel maps}

  In this section we review some needed material from \cite{NNI}.

We fix a good resolution $\phi:\tX\to X$ of a normal surface singularity,
whose link is a rational homology sphere. 

\subsubsection{} \label{ss:4.1}
Let us fix an effective integral cycle  $Z\in L$, $Z\geq E$. (The restriction $Z\geq E$ is imposed by the
easement of the presentation, everything can be adopted  for $Z>0$).

Let $\eca(Z)$  be the space of effective Cartier (zero dimensional) divisors supported on  $Z$.
Taking the class of a Cartier divisor provides  a map
$c:\eca(Z)\to \pic(Z)$.
Let  $\eca^{l'}(Z)$ be the set of effective Cartier divisors with
Chern class $l'\in L'$, that is,
$\eca^{l'}(Z):=c^{-1}(\pic^{l'}(Z))$.

We consider the restriction of $c$, $c^{l'}:\eca^{l'}(Z)
\to \pic^{l'}(Z)$ too, sometimes still denoted by $c$. 

For any $Z_2\geq Z_1>0$ one has the natural  commutative diagram
\begin{equation}\label{eq:diagr}
\begin{picture}(200,45)(0,0)
\put(50,37){\makebox(0,0)[l]{$
\eca^{l'}(Z_2)\,\longrightarrow \, \pic^{l'}(Z_2)$}}
\put(50,8){\makebox(0,0)[l]{$
\eca^{l'}(Z_1)\,\longrightarrow \, \pic^{l'}(Z_1)$}}
\put(70,22){\makebox(0,0){$\downarrow$}}
\put(135,22){\makebox(0,0){$\downarrow$}}
\end{picture}
\end{equation}

As usual, we say that $\calL\in \pic^{l'}(Z)$ has no fixed components if
\begin{equation}\label{eq:H_0}
H^0(Z,\calL)_{reg}:=H^0(Z,\calL)\setminus \bigcup_v H^0(Z-E_v, \calL(-E_v))
\end{equation}
is non--empty. 
Note that $H^0(Z,\calL)$ is a module over the algebra
$H^0(\calO_Z)$, hence one has a natural action of $H^0(\calO_Z^*)$ on
$H^0(Z, \calL)_{reg}$. This second action is algebraic and free.  Furthermore,
 $\calL\in \pic^{l'}(Z)$ is in the image of $c$ if and only if
$H^0(Z,\calL)_{reg}\not=\emptyset$. In this case, $c^{-1}(\calL)=H^0(Z,\calL)_{reg}/H^0(\calO_Z^*)$.

One verifies that $\eca^{l'}(Z)\not=\emptyset$ if and only if $-l'\in \calS'\setminus \{0\}$. Therefore, it is convenient to modify the definition of $\eca$ in the case $l'=0$: we (re)define $\eca^0(Z)=\{\emptyset\}$,
as the one--element set consisting of the `empty divisor'. We also take $c^0(\emptyset):=\calO_Z$, then we have
\begin{equation}\label{eq:empty}
\eca^{l'}(Z)\not =\emptyset \ \ \Leftrightarrow \ \ l'\in -\calS'.
\end{equation}
If $l'\in -\calS'$  then
  $\eca^{l'}(Z)$ is a smooth variety whose dimension equals with the intersection number $(l',Z)$. Moreover,
if $\calL\in \im (c^{l'}(Z))$ (the image of the map $c^{l'}$)
then  the fiber $c^{-1}(\calL)$
 is a smooth, irreducible quasiprojective variety of  dimension
 \begin{equation}\label{eq:dimfiber}
\dim(c^{-1}(\calL))= h^0(Z,\calL)-h^0(\calO_Z)=
 (l',Z)+h^1(Z,\calL)-h^1(\calO_Z).
 \end{equation}

Let us recall the following statement from \cite{NNI}:

\begin{lemma}\label{injectiveabel}\cite{NNI}
Let us have a singularity $(X, 0)$ and its good resolution $\tX$ with resolution graph $\mathcal{T}$ and an effective integer cycle on it $Z$. 
Suppose that there is a vertex $v \in \calv$ such that $Z_v = 1$, then the Abel map $c^{-E_v^*}(Z) : \eca^{-E_v^*}(Z) \to \pic^{-E_v^*}(Z)$ is injective.
\end{lemma}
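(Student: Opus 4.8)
The plan is to exploit the description of fibers of Abel maps recalled just above: for $\calL\in\im(c^{l'}(Z))$ one has $c^{-1}(\calL)=H^0(Z,\calL)_{reg}/H^0(\calO_Z^*)$, and the action of $H^0(\calO_Z^*)$ on $H^0(Z,\calL)_{reg}$ is free. So injectivity of $c^{-E_v^*}(Z)$ amounts to the claim that for every $\calL\in\im(c^{-E_v^*}(Z))$ the quotient $H^0(Z,\calL)_{reg}/H^0(\calO_Z^*)$ is a single point; equivalently, any two sections $s_1,s_2\in H^0(Z,\calL)_{reg}$ differ by multiplication by a unit in $H^0(\calO_Z)$. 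First I would observe that $\dim\eca^{-E_v^*}(Z)=(-E_v^*,Z)=Z_v=1$, so $\eca^{-E_v^*}(Z)$ is a smooth curve; a Cartier divisor $D\in\eca^{-E_v^*}(Z)$ with Chern class $-E_v^*$ is, since $(-E_v^*,E_w)=\delta_{vw}$, a single reduced transversal cut of $E_v$ at one point $p\in E_{v,\mathrm{reg}}$ (using $Z_v=1$, so locally $Z$ looks like $\{x=0\}$ near $p$ and the divisor is cut out by $(x^{0}\text{-free})$, i.e. by a function $y$ vanishing to order one). Thus $\eca^{-E_v^*}(Z)$ is naturally identified with (an open subset of) $E_v\cong\mathbb P^1$, via $D\mapsto \mathrm{supp}(D)$.

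Next I would analyze when two such divisors $D_1=(p_1)$, $D_2=(p_2)$ have the same image, i.e. $\calO_Z(D_1)\cong\calO_Z(D_2)$ in $\pic(Z)$. Tensoring by $\calO_Z(-D_1)$, this says $\calO_Z(D_2-D_1)\cong\calO_Z$; but $D_2-D_1$ is a difference of two transversal cuts of $E_v$, supported at $p_1,p_2$. The key point will be that because $Z_v=1$, the structure sheaf $\calO_{E_v}\cong\mathbb P^1$ "sees" the divisor: restricting the isomorphism $\calO_Z(D_2)\cong\calO_Z(D_1)$ to $E_v$ gives an isomorphism $\calO_{\mathbb P^1}(p_2)\cong\calO_{\mathbb P^1}(p_1)$ of line bundles on $\mathbb P^1$ compatible with the sections cutting out $p_1,p_2$ — and on $\mathbb P^1$ a degree-one line bundle has a one-dimensional space of sections up to scalar, so its unique effective divisor is determined; hence $p_1=p_2$. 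Concretely: a section of $\calO_Z(D_i)_{\mathrm{reg}}$ restricts to a nonzero section of $\calO_{E_v}(D_i|_{E_v})=\calO_{\mathbb P^1}(1)$ whose zero is exactly $p_i$ (it is nonzero on $E_v$ precisely because the section is in the "$\mathrm{reg}$" locus, so does not vanish identically on $E_v$), and an isomorphism of the two bundles on $Z$ carrying $s_1$ to $\lambda s_2$ restricts to such on $\mathbb P^1$, forcing $p_1=p_2$, hence $D_1=D_2$.

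Once $D_1=D_2$ (so the two divisors coincide as Cartier divisors), the equality of their images in $\pic(Z)$ is automatic and injectivity of $c^{-E_v^*}(Z)$ follows. I expect the main obstacle to be the bookkeeping at the point $p\in E_{v,\mathrm{reg}}$: one must check carefully that a Cartier divisor on $Z$ with Chern class $-E_v^*$ really is a reduced transversal cut (not, say, a length-$>1$ scheme, which is excluded by $(-E_v^*,E_w)=\delta_{vw}$ and $Z_v=1$), and that "restriction to $E_v=\mathbb P^1$" behaves well on sections — i.e. that a section in $H^0(Z,\calL)_{\mathrm{reg}}$ restricts to a \emph{nonzero} section of $\calL|_{E_v}$, which is exactly the content of the definition of $H^0(Z,\calL)_{\mathrm{reg}}$ via (\ref{eq:H_0}), taking $E_w=E_v$. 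A clean alternative packaging of the same argument: use (\ref{eq:dimfiber}) to note $\dim c^{-1}(\calL)=h^0(Z,\calL)-h^0(\calO_Z)$, and show directly $h^0(Z,\calL)=h^0(\calO_Z)$ for $\calL\in\im(c^{-E_v^*}(Z))$ by the same $\mathbb P^1$-restriction, so the fibers are points; but I would present the explicit divisor-theoretic version above as it is the most transparent.
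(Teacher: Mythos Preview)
The paper does not prove this lemma; it is cited from \cite{NNI}, so there is no in-paper proof to compare against. Assessing your argument on its own merits, there is a genuine error at the key step. You assert that ``on $\mathbb{P}^1$ a degree-one line bundle has a one-dimensional space of sections up to scalar, so its unique effective divisor is determined.'' But $h^0(\mathbb{P}^1,\calO_{\mathbb{P}^1}(1))=2$: the complete linear system $|\calO_{\mathbb{P}^1}(1)|$ is a full $\mathbb{P}^1$ of effective divisors, one for each point. Hence two sections $s_1,s_2\in H^0(Z,\calL)_{reg}$ may restrict to linearly independent sections of $\calL|_{E_v}\cong\calO_{\mathbb{P}^1}(1)$ vanishing at distinct points $p_1\neq p_2$, and restriction to $E_v$ alone does not force $D_1=D_2$. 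The same gap affects your ``alternative packaging'': proving $h^0(Z,\calL)=h^0(\calO_Z)$ is exactly the content in question, and restriction to $E_v$ only gives the bound $h^0(Z,\calL)\le h^0(\calO_Z)+1$, not equality.

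In fact the statement fails in the degenerate case $|Z|=\{v\}$, i.e.\ $Z=E_v$: then $\eca^{-E_v^*}(Z)\cong E_v^{reg}$ is one-dimensional while $\pic^{-E_v^*}(Z)$ is a single point, so the Abel map is constant. Every application in the present paper has $e_Z(v)\ge 1$, which forces the Abel map to be non-constant; under that hypothesis a correct argument is as follows. The source $\eca^{-E_v^*}(Z)$ is identified with the irreducible curve $E_v^{reg}$ (your first paragraph is fine here), and by the fibre description (\ref{eq:dimfiber}) each fibre $c^{-1}(\calL)$ is irreducible of dimension $h^0(Z,\calL)-h^0(\calO_Z)\in\{0,1\}$. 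A one-dimensional irreducible closed fibre inside a one-dimensional irreducible source must be the whole source, forcing the map to be constant, a contradiction; hence all fibres are single points and the map is injective. The missing ingredient in your approach is precisely this use of irreducibility of the fibres together with a global dimension count, rather than a local comparison on $E_v$.
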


We also will use the statement of the next lemma frequently:

\begin{proposition}\cite{NNI}\label{redcycleF}
Let  $\tX$ be a resolution of a singularity $(X, 0)$ with resolution graph $\mathcal{T}$ and consider an $l' \in -S'$ and $Z_1 \leq Z_2$ effective cycles  such that $h^1(\calO_{Z_1}) = h^1(\calO_{Z_2})$.
If $\calL \in \im(c^{l'}(Z_2))$ is a line bundle then we have $h^1(Z_2, \calL) = h^1(Z_1, \calL | Z_1)$.
\end{proposition}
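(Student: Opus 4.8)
The plan is to reduce the claim to the vanishing of $H^1$ of one auxiliary sheaf, and then to feed in the two hypotheses separately. Put $Z':=Z_2-Z_1\geq 0$; we may assume $Z'>0$, the case $Z_1=Z_2$ being trivial. I would start from the standard exact sequence of $\calO_{Z_2}$-modules
$0\to \calO_{Z'}(-Z_1)\to \calO_{Z_2}\to \calO_{Z_1}\to 0$,
where $\calO_{Z'}(-Z_1)=\calO_{\tX}(-Z_1)|_{Z'}$ is the kernel of the restriction $\calO_{Z_2}\to\calO_{Z_1}$ (indeed it equals $\calO_{\tX}(-Z_1)/\calO_{\tX}(-Z_2)$). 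Since $\calL\in\pic^{l'}(Z_2)$ is a line bundle on $Z_2$, hence a locally free (in particular flat) $\calO_{Z_2}$-module, tensoring this sequence by $\calL$ keeps it exact and produces
$0\to\mathcal M\to\calL\to\calL|_{Z_1}\to 0$, where $\mathcal M:=\calL\otimes_{\calO_{Z_2}}\calO_{Z'}(-Z_1)$ is a line bundle on $Z'$ (equal to $(\calL|_{Z'})(-Z_1)$), sitting inside $\calL$ as the submodule $\calO_{Z'}(-Z_1)\cdot\calL$. Because $\mathcal M$ has support of dimension at most $1$, one has $H^2(Z_2,\mathcal M)=0$, so from the cohomology sequence it suffices to prove $H^1(Z_2,\mathcal M)=0$: this forces $H^1(Z_2,\calL)\xrightarrow{\ \sim\ }H^1(Z_1,\calL|_{Z_1})$, which is the assertion.

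Next I would dispose of the analogue for the structure sheaf itself, namely $H^1(\calO_{Z'}(-Z_1))=0$. Here the input is the hypothesis $h^1(\calO_{Z_1})=h^1(\calO_{Z_2})$ together with $H^0(\calO_{Z_1})=H^0(\calO_{Z_2})=\C$ (valid since $Z_1,Z_2\geq E$ and $E$ is connected). In the cohomology sequence of $0\to \calO_{Z'}(-Z_1)\to\calO_{Z_2}\to\calO_{Z_1}\to 0$, the restriction $H^0(\calO_{Z_2})\to H^0(\calO_{Z_1})$ sends $1$ to $1$ and is therefore onto, so the connecting map vanishes and $H^1(\calO_{Z'}(-Z_1))\hookrightarrow H^1(\calO_{Z_2})$. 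On the other hand $H^1(\calO_{Z_2})\twoheadrightarrow H^1(\calO_{Z_1})$ (again because the $H^2$ of the kernel vanishes), and equality of dimensions forces this surjection to be an isomorphism, whence $H^1(\calO_{Z'}(-Z_1))=0$.

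Finally I would use the hypothesis $\calL\in\im(c^{l'}(Z_2))$: choose a section $s\in H^0(Z_2,\calL)_{reg}$, so that multiplication by $s$ is an injection $\calO_{Z_2}\hookrightarrow\calL$ whose cokernel is supported on the zero-dimensional divisor $D=|s|\in\eca^{l'}(Z_2)$. Since $s$ is a section of $\calL$, multiplication by $s$ carries the submodule $\calO_{Z'}(-Z_1)\subset\calO_{Z_2}$ into the submodule $\mathcal M=\calO_{Z'}(-Z_1)\cdot\calL\subset\calL$, and the resulting map $\calO_{Z'}(-Z_1)\to\mathcal M$ is injective (it is the composite of the two injections $\calO_{Z'}(-Z_1)\hookrightarrow\calO_{Z_2}\xrightarrow{\,\cdot s\,}\calL$ followed by the inclusion $\mathcal M\hookrightarrow\calL$), with cokernel $\mathcal Q$ supported on $|D|\cap|Z'|$, hence zero-dimensional and satisfying $H^1(\mathcal Q)=0$. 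The sequence $0\to\calO_{Z'}(-Z_1)\to\mathcal M\to\mathcal Q\to 0$ then yields a surjection $H^1(\calO_{Z'}(-Z_1))\twoheadrightarrow H^1(\mathcal M)$, and by the previous paragraph $H^1(\mathcal M)=0$. Combined with the first paragraph this gives $h^1(Z_2,\calL)=h^1(Z_1,\calL|_{Z_1})$.

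The routine ingredients I would take for granted (they are part of the setup recalled from \cite{NNI}) are that a section in $H^0(Z_2,\calL)_{reg}$ is a non--zero--divisor with finite vanishing locus, so that $D\in\eca^{l'}(Z_2)$ and the relevant cokernels genuinely have zero-dimensional support, and that $H^0(\calO_Z)=\C$ for effective $Z\geq E$. The only place where a little care is actually needed is the bookkeeping with $\mathcal M$: one must use that $\calL$ is locally free over $\calO_{Z_2}$ in order to keep the tensored sequence exact, to realize $\mathcal M$ as a subsheaf of $\calL$, and to check that the map $\calO_{Z'}(-Z_1)\to\mathcal M$ induced by $s$ has zero-dimensional cokernel; everything else is formal diagram chasing.
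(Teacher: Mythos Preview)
The paper does not give its own proof of this proposition; it is simply quoted from \cite{NNI}. Your argument is correct and is the standard one: reduce to the vanishing of $H^1$ of the kernel sheaf $\mathcal{M}=(\calL|_{Z'})(-Z_1)$ via the short exact sequence, then use a regular section of $\calL$ to dominate $\mathcal{M}$ by $\calO_{Z'}(-Z_1)$ with zero--dimensional cokernel, and finally kill $H^1(\calO_{Z'}(-Z_1))$ using the hypothesis $h^1(\calO_{Z_1})=h^1(\calO_{Z_2})$.

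One small point: you invoke $H^0(\calO_{Z_1})=H^0(\calO_{Z_2})=\C$, which needs $Z_1\geq E$ (or at least $|Z_1|$ connected), whereas the proposition as stated allows an arbitrary effective $Z_1\leq Z_2$. This is easily repaired without that assumption. You already have that $H^1(\calO_{Z_2})\to H^1(\calO_{Z_1})$ is a surjection between spaces of equal dimension, hence an isomorphism, so the map $H^1(\calO_{Z'}(-Z_1))\to H^1(\calO_{Z_2})$ is zero. Combined with the surjection $H^1(\calO_{Z'}(-Z_1))\twoheadrightarrow H^1(\mathcal M)$ coming from multiplication by $s$, commutativity of the square
\[
\begin{CD}
H^1(\calO_{Z'}(-Z_1)) @>{0}>> H^1(\calO_{Z_2})\\
@VV{\text{onto}}V @VVV\\
H^1(\mathcal M) @>>> H^1(Z_2,\calL)
\end{CD}
\]
forces $H^1(\mathcal M)\to H^1(Z_2,\calL)$ to be zero. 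That already gives injectivity (hence bijectivity) of $H^1(Z_2,\calL)\to H^1(Z_1,\calL|_{Z_1})$, which is all that is needed; you do not actually have to prove the stronger statement $H^1(\mathcal M)=0$.
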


\bekezdes \label{bek:I}
Consider again  a Chern class (or cycle) $l'\in-\calS'$ as above.
The $E^*$--support $|l'|^* = J(l')\subset \calv$ of $l'$ is defined via the identity  $l'=\sum_{v\in J(l')}a_vE^*_v$ with all
$\{a_v\}_{v\in J}$ nonzero. Its role is the following.

Besides the Abel map $c^{l'}(Z)$ one can consider its `multiples' $\{c^{nl'}(Z)\}_{n\geq 1}$ as well. It turns out
(cf. \cite[\S 6]{NNI}) that $n\mapsto \dim \im (c^{nl'}(Z))$
is a non-decreasing sequence, and  $\im (c^{nl'}(Z))$ is an affine subspace for $n\gg 1$, whose dimension $e_Z(l')$ is independent of $n$, and essentially it depends only
on $J(l')$.
We denote the linearization of this affine subspace by $V_Z(J) \subset H^1(\calO_Z)$ or if the cycle $Z \gg 0$, then $ V_{\tX}(J) \subset H^1(\calO_{\tX})$.

Moreover, by \cite[Theorem 6.1.9]{NNI},
\begin{equation*}\label{eq:ezl}
e_Z(l')=h^1(\calO_Z)-h^1(\calO_{Z|_{\calv\setminus J(l')}}),
\end{equation*}
where $Z|_{\calv\setminus J(l')}$ is the restriction of the cycle $Z$ to its $\{E_v\}_{v\in \calv\setminus J(l')}$
coordinates.

If $Z\gg 0$ (i.e. all its $E_v$--coordinates are very large), then (\ref{eq:ezl}) reads as
\begin{equation*}\label{eq:ezlb}
e_Z(l')=h^1(\calO_{\tX})-h^1(\calO_{\tX(\calv\setminus J(l'))}),
\end{equation*}
where $\tX(\calv\setminus J(l'))$ is a convenient small tubular neighbourhood of $\cup_{v\in \calv\setminus J(l')}E_v$.

Let $\Omega _{\tX}(J)$ be the subspace of $H^0(\tX\setminus E, \Omega^2_{\tX})/ H^0(\tX,\Omega_{\tX}^2)$ generated by differential forms which have no poles along $E_J\setminus \cup_{v\not\in J}E_v$.
Then, cf. \cite[\S8]{NNI},
\begin{equation*}\label{eq:ezlc}
h^1(\calO_{\tX(\calv\setminus J)})=\dim \Omega_{\tX}(J).
\end{equation*}

Similarly let $\Omega _{Z}(J)$ be the subspace of $H^0(\calO_{\tX}(K + Z))/ H^0(\calO_{\tX}(K))$ generated by differential forms which have no poles along $E_J\setminus \cup_{v\not\in J}E_v$.
Then, cf. \cite[\S8]{NNI},
\begin{equation*}\label{eq:ezlc}
h^1(\calO_{Z_{(\calv\setminus J)}})=\dim \Omega_{Z}(J).
\end{equation*}

We have also the following duality from \cite{NNI} supporting the equalities above:

\begin{theorem}\cite{NNI}\label{th:DUALVO}
Via Laufer duality one has  $V_{\tX}(J)^*=\Omega_{\tX}(J)$ and $V_{Z}(J)^*=\Omega_Z(J)$.
\end{theorem}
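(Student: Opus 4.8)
The plan is to identify $V_Z(J)$ with the direction space of the tangent space of an Abel map, dualize it through Laufer/Serre duality on $Z$, and then match the resulting subspace of $H^0(\calO_Z(K+Z))$ with $\Omega_Z(J)$; the statement over $\tX$ will follow by letting $Z\gg 0$. Throughout, for a subspace $W\subseteq H^1(\calO_Z)$ I read $W^{*}$ as the annihilator of $W$ inside $\Omega_Z:=H^0(\calO_{\tX}(K+Z))/H^0(\calO_{\tX}(K))\cong H^1(\calO_Z)^{*}$ (Laufer duality); this is the only reading consistent with the dimension formulas recalled above, since $\dim\Omega_Z(J)=h^1(\calO_{Z|_{\calv\setminus J}})=h^1(\calO_Z)-e_Z(l')=h^1(\calO_Z)-\dim V_Z(J)$.

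Fix $l'\in-\calS'$ with $J(l')=J$ and take $n\gg 1$, so that $\im(c^{nl'}(Z))\subseteq\pic^{nl'}(Z)$ is the affine subspace of dimension $e_Z(l')$ with direction space $V_Z(J)$. For $D\in\eca^{nl'}(Z)$, the sequence $0\to\calO_Z\to\calO_Z(D)\to\calO_D(D)\to 0$, together with $H^1(\calO_D(D))=0$ (as $D$ is $0$--dimensional), identifies the differential $dc_D$ with the connecting map $H^0(\calO_D(D))\to H^1(\calO_Z)$ and yields
\[
\im(dc_D)=\ker\!\big(H^1(\calO_Z)\to H^1(\calO_Z(D))\big).
\]
Since $\im(dc_D)\subseteq T_{c(D)}\im(c^{nl'}(Z))=V_Z(J)$ always, and $\dim\im(dc_D)=e_Z(l')=\dim V_Z(J)$ whenever $D$ lies in the dense open locus where $dc_D$ has maximal rank, we get $V_Z(J)=\ker(H^1(\calO_Z)\to H^1(\calO_Z(D)))$ for generic $D\in\eca^{nl'}(Z)$.

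Now dualize the surjection $H^1(\calO_Z)\twoheadrightarrow H^1(\calO_Z(D))$ using duality on $Z$, under which $H^1(\calO_Z(D))^{*}=H^0(\calO_Z(K+Z-D))$ and the dual map is the inclusion $H^0(\calO_Z(K+Z-D))\hookrightarrow H^0(\calO_Z(K+Z))$. Hence $\operatorname{Ann}(V_Z(J))$ equals the space of global sections of $\calO_Z(K+Z)$ vanishing along $D$, and it remains to show this equals $\Omega_Z(J)$; by the dimension equality above it suffices to prove one inclusion. For this I would use that a generic $D\in\eca^{nl'}(Z)$ with $n\gg 1$ is a disjoint union of reduced curvettes, each transversal to some $E_v$ ($v\in J$) at a generic point of $E_v^{\circ}:=E_v\setminus\bigcup_{w\ne v}E_w$. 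If $\omega\in H^0(\calO_{\tX}(K+Z))$ represents a class in $\Omega_Z(J)$, then near such a point $p\in E_v^{\circ}$ it has no pole along $E_v$, so in local coordinates $x,y$ with $E=\{x=0\}$ it is $g\cdot x^{-Z_v}\,dx\wedge dy$ with $x^{Z_v}\mid g$; thus its image in $\calO_Z(K+Z)$ (locally generated by $x^{-Z_v}dx\wedge dy$ over $\calO_Z=\calO_{\tX}/(x^{Z_v})$) vanishes on a neighbourhood of $E_v^{\circ}$, hence along every component of $D$. Therefore $\Omega_Z(J)\subseteq\operatorname{Ann}(V_Z(J))$, and equality follows since $\dim\Omega_Z(J)=h^1(\calO_Z)-\dim V_Z(J)=\dim\operatorname{Ann}(V_Z(J))$. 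Passing to $Z\gg 0$ — where $H^1(\calO_Z)=H^1(\calO_{\tX})$, $V_Z(J)=V_{\tX}(J)$, $\Omega_Z(J)=\Omega_{\tX}(J)$, and Laufer duality reads $H^1(\calO_{\tX})^{*}\cong H^0(\tX\setminus E,\Omega^2_{\tX})/H^0(\tX,\Omega^2_{\tX})$ — gives the assertion over $\tX$.

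The step I expect to be the genuine obstacle is the geometric description of a generic effective Cartier divisor of large Chern class $nl'$, namely that it splits into disjoint transversal curvettes through generic interior points of the $E_v$ with $v\in J$, so that the local computation in the previous paragraph applies along all of $D$. (If one does not wish to lean on the recalled dimension formulas, one must instead establish the reverse inclusion directly, showing that a fixed class in $\Omega_Z$ vanishing on sufficiently many generic transversal cuts through $E_v^{\circ}$ can have no pole along $E_v^{\circ}$; this forces one to make ``$n$ large enough'' effective, and is the only quantitative point — the tangent-space computation for Abel maps and the duality step being purely formal.)
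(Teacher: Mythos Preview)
The paper does not supply a proof of this statement: Theorem~\ref{th:DUALVO} is quoted verbatim from \cite{NNI} with no argument given here, so there is no ``paper's own proof'' to compare against. What I can do is assess your proposal on its own terms.

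Your argument is sound and follows the natural line one would expect from the set--up in \cite{NNI}. The identification of $V_Z(J)$ with $\ker\big(H^1(\calO_Z)\to H^1(\calO_Z(D))\big)$ for generic $D$ via the differential of the Abel map is correct (the connecting homomorphism of $0\to\calO_Z\to\calO_Z(D)\to\calO_D(D)\to 0$ is exactly $dc_D$), and the dualization to $H^0(\calO_Z(K+Z-D))$ is standard Serre duality on $Z$; note here that the identification $H^0(\calO_{\tX}(K+Z))/H^0(\calO_{\tX}(K))\cong H^0(\calO_Z(K+Z))$ you implicitly use does hold, since both sides have dimension $h^1(\calO_Z)$ and the restriction map is injective. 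Your local computation showing $\Omega_Z(J)\subseteq\operatorname{Ann}(V_Z(J))$ is clean, and closing by the dimension count $\dim\Omega_Z(J)=h^1(\calO_{Z|_{\calv\setminus J}})=h^1(\calO_Z)-e_Z(l')$ is exactly how the paper packages these invariants.

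The point you flag as the obstacle --- that a generic $D\in\eca^{nl'}(Z)$ is a disjoint union of reduced transversal curvettes through interior points of the $E_v$, $v\in J$ --- is not actually a difficulty. Divisors of that shape form a nonempty open subset of $\eca^{nl'}(Z)$: the locus of such tuples has dimension $\sum_{v\in J} na_v\cdot Z_v=(nl',Z)=\dim\eca^{nl'}(Z)$, and since $\eca^{nl'}(Z)$ is irreducible this open set is dense. So your concern there is unfounded, and the proof goes through as written.
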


\section{Analytic invariants of generic analytic type}\label{s:AnGen}

For a precise working definition of a generic analytic type see \cite{NNII}, \cite{R}, in a slightly simplified language we can regard the generic analytic structure in the following way as well.

Fix a rational homology sphere resolution graph $\mathcal{T}$, for each $E_v$ ($v\in\calv$) the disc bundle with Euler number $E_v^2$ is taut:
it has no analytic moduli. The generic $\tX$ is obtained by gluing `generically' these bundles according to the edges of $\Gamma$ (as an analytic plumbing).

\subsection{Review of some results of \cite{NNII}}\label{ss:ReviewNNII}

 The list of analytic invariants, associated with a generic analytic type
  (with respect to a fixed resolution graph),
 which in \cite{NNII} are described topologically include  the following ones:
 $h^1(\calO_Z)$, $h^1(\calO_Z(l'))$ (with certain restrictions on the Chern class $l'$),
  --- this last one applied  for $Z\gg 0$ provides  $h^1(\calO_{\tX})$
 and  $h^1(\calO_{\tX}(l')$) too ---,
the analytic semigroup, 
and the maximal ideal cycle of $\tX$.
See above or \cite{CDGPs,CDGEq,Lipman,Nfive,MR}
for the definitions and relationships between them.
The topological characterizations use the Riemann--Roch expression $\chi:L'\to \bQ$.

In the next theorem  the bundles $\calO_{\tX}(-l')$ are the `restricted natural line bundles'
associated with some pair $\tX\subset \tX_{top}$. In particular, it is valid even if $\tX_{top}=\tX$ and the bundles are natural line bundles.
The theorem (and basically several statements
 regarding generic analytic structure and restricted natural line bundles) says that these bundles behave cohomologically
as the generic line bundles in ${\rm Pic}^{-l'}(\tX)$ (for more comments on this phenomenon  see \cite{NNII}).

\begin{theorem}\cite[Theorem A]{NNII}\label{th:OLD}
 Fix a rational homology sphere resolution graph $\mathcal{T}$ (tree of $\bP^1$'s) and let's have a generic analytic type $\tX$ corresponding to it. Then
the following identities hold:\\
(a) For any effective cycle $Z\in L_{>0}$, such that the support $|Z|$ is connected, we have
\begin{equation*}
h^1(\calO_Z) = 1-\min_{0< l \leq Z,l\in L}\{\chi(l)\}.
\end{equation*}
(b) If $l'=\sum_{v\in \calv}l'_vE_v \in L'$ satisfies
$l'_v >0$ for any $E_v$ in the support of $Z$ then
\begin{equation*}
h^1(\calO_Z(-l'))=\chi(l')-\min _{0\leq l\leq Z, l\in L}\, \{\chi(l'+l)\}.
\end{equation*}
(c) If $p_g(X,o)=h^1(\tX,\calO_{\tX})$ is the geometric genus of $(X,o)$ then
\begin{equation*}
p_g(X,o)= 1-\min_{l\in L_{>0}}\{\chi(l)\} =-\min_{l\in L}\{\chi(l)\}+\begin{cases}
1 & \mbox{if $(X,o)$ is not rational}, \\
0 & \mbox{else}.
\end{cases}
\end{equation*}
(d) More generally, for any $l'\in L'$
\begin{equation*}
h^1(\calO_{\tX}(-l'))=\chi(l')-\min _{l\in L_{\geq 0}}\, \{\chi(l'+l)\}+
\begin{cases}
1 & \mbox{if \ $l'\in L_{\leq 0}$ and $(X,o)$ is not rational}, \\
0 & \mbox{else}.
\end{cases}
\end{equation*}
(e) For $l\in L$ set $\mathfrak{h}(l)=\dim ( H^0(\tX, \calO_{\tX})/ H^0(\tX, \calO_{\tX}(-l)))$.
Then $\mathfrak{h}(0)=0$ and for $l_0>0$ one has
\begin{equation*}
\mathfrak{h}(l_0)=\min_{l\in L_{\geq 0}} \{\chi(l_0+l)\}-\min_{l\in L_{\geq 0}} \{\chi(l)\}+
\begin{cases}
1 & \mbox{if $(X,o)$ is not rational}, \\
0 & \mbox{else}.
\end{cases}
\end{equation*}
(f) \ 
$\calS'_{an}= \{l'\,:\, \chi(l')<
\chi(l' +l) \ \mbox{for any $l\in L_{>0}$}\}\cup\{0\}$.\\
(g)
 Assume that $\Gamma$ is a non--rational graph  and set
$\calm=\{ Z\in L_{>0}\,:\, \chi(Z)=\min _{l\in L}\chi(l)\}$.
Then 
the unique maximal element of $\calm$ is the maximal ideal cycle of\, $\tX$.

(Note that in the above formulae one also has $\min _{l\in L_{\geq 0}}\{\chi(l)\}=\min _{l\in L}\{\chi(l)\}$.)
\end{theorem}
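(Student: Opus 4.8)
The plan is to reduce the whole theorem to parts (a) and (b) and to prove those by a two-sided estimate. Granting (a), (b): parts (c) and (d) follow by letting $Z\gg 0$ (so that $H^1(\calO_{\tX}(-l'))$ is already computed on a large enough cycle) together with the elementary remark that $\chi(E_v)=1$ for every vertex, which forces $\min_{0<l\le Z}\chi(l)\le 1$ and thereby produces both the ``rational / non-rational'' dichotomy and the parenthetical identity $\min_{l\in L_{\ge 0}}\chi(l)=\min_{l\in L}\chi(l)$; part (e) comes from $0\to\calO_{\tX}(-l_0)\to\calO_{\tX}\to\calO_{l_0}\to 0$ after substituting the values of $h^0,h^1$ supplied by (a)--(d); part (f) is the reading of ``$\calO_{\tX}(-l')$ has no fixed component'' as a stability statement for $H^0$ extracted from (d)--(e); and (g) then follows from (e) and the description of the maximal ideal cycle as the largest cycle at which $\mathfrak h$ attains its eventual value, which by (e) is exactly the top element of $\calm$.

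\emph{The lower bound} $h^1(\calO_Z(-l'))\ge\chi(l')-\min_{0\le l\le Z}\chi(l'+l)$ holds for an arbitrary analytic type: for $0\le l\le Z$ the restriction $\calO_Z(-l')\to\calO_l(-l')$ is surjective with kernel supported on the one-dimensional scheme $Z-l$, so the cohomology sequence forces $h^1(\calO_Z(-l'))\ge h^1(\calO_l(-l'))$, and Riemann--Roch gives $h^1(\calO_l(-l'))=h^0(\calO_l(-l'))-\chi(l'+l)+\chi(l')\ge\chi(l')-\chi(l'+l)$ --- with the extra $+1$ of part (a) coming from the constant section when $l'=0$. Maximising over $l$ gives the inequality ``$\ge$'' in both (a) and (b) for every singularity realising $\calt$.

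\emph{The upper bound for the generic type is the crux.} One inducts on $\sum_v Z_v$. For the inductive step one chooses a vertex $v\in|Z|$ such that raising the $v$-coordinate up to $Z_v$ does not decrease $\min_{0\le l\le Z}\chi(l'+l)$; such a $v$ exists unless the largest minimising cycle is $Z$ itself, and that residual case is handled by a finer analysis of the variation of $\chi$ in one coordinate. For such $v$ one uses the exact sequence
\[
0\to\calO_{E_v}(-l'-Z+E_v)\to\calO_Z(-l')\to\calO_{Z-E_v}(-l')\to 0,
\]
whose outer terms are explicit since $E_v\cong\bP^1$; everything then reduces to the rank of the connecting homomorphism $\delta\colon H^0(\calO_{Z-E_v}(-l'))\to H^1(\calO_{E_v}(-l'-Z+E_v))$, and for the chosen $v$ one needs $\delta$ to be surjective, which yields $h^1(\calO_Z(-l'))=h^1(\calO_{Z-E_v}(-l'))$ and closes the induction against the formula (the base case $Z=E_v$ being a direct line-bundle computation on $\bP^1$). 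Surjectivity of $\delta$ is precisely where the generic analytic structure is spent: in a plumbing atlas adapted to $\calt$ the map $\delta$ is represented by a matrix whose entries are universal polynomials in the gluing parameters at the nodes $E_v\cap E_w$, and one must show that for generic gluing this matrix has maximal rank --- using the inductive hypothesis to guarantee that $H^0(\calO_{Z-E_v}(-l'))$ carries enough ``moving'' sections for the target to be reached, which is where the numerical hypotheses ($l'_v>0$ on $|Z|$, respectively $H^0(\,\cdot\,)_{reg}\ne\emptyset$) are consumed. I expect this genericity statement to be the main obstacle: turning ``generic analytic plumbing'' into a normal form for the transition functions precise enough to compute, uniformly in $(Z,l',v)$ along the induction, the rank of $\delta$ --- typically via a Vandermonde / general-position argument on the Taylor coefficients of the generic identifications. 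Everything else --- the reductions, the Riemann--Roch lower bound, the combinatorics of $\min\chi$ --- is comparatively routine.
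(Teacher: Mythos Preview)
The paper does not prove this theorem: it is quoted verbatim from \cite{NNII} (Theorem~A there) as background material in the ``Review of some results of \cite{NNII}'' subsection, with no argument given. So there is no proof in this paper to compare your proposal against.

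On your outline itself: the reductions of (c)--(g) to (a)--(b) and the universal lower bound via $h^1(\calO_Z(-l'))\ge h^1(\calO_l(-l'))\ge \chi(l')-\chi(l'+l)$ are standard and correct. The genuine content is, as you say, the upper bound for generic $\tX$, and here your sketch is honest but incomplete: you reduce to surjectivity of a connecting map $\delta$ and then defer to an unspecified ``Vandermonde / general-position argument on the Taylor coefficients of the generic identifications''. That is exactly the step that carries all the weight, and writing ``I expect this genericity statement to be the main obstacle'' is an acknowledgement that you have not carried it out. In \cite{NNII} this step is handled not by a direct plumbing-coordinate computation of $\delta$ but through Laufer's deformation space of analytic structures together with semicontinuity: one shows that the claimed formula is attained for \emph{some} analytic structure (or degeneration) and then uses upper-semicontinuity of $h^1$ to conclude it holds generically. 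Your inductive exact-sequence scheme is plausible in spirit, but without an actual proof that $\delta$ is generically of maximal rank --- uniformly along the induction and for the particular $v$ you must choose --- it remains a program rather than a proof.
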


\section{The relative setup.}

In this section we wish to summarise the results from \cite{R} about relatively generic analytic structures what we need in this article, as always we again deal only with rational homology sphere
resolution graphs.

We consider an effective integer cycle $Z \geq E$ on a resolution $\tX$ with resolution graph $\mathcal{T}$, and a smaller cycle $Z_1 \leq Z$, where we denote $|Z_1| = \calv_1$ and the possibly nonconnected subgraph corresponding to it by $\mathcal{T}_1$.

We have the restriction map $r: \pic(Z)\to \pic(Z_1)$ and one has also the (cohomological) restriction operator
  $R_1 : L'(\mathcal{T}) \to L_1':=L'(\mathcal{T}_1)$
(defined as $R_1(E^*_v(\mathcal{T}))=E^*_v(\mathcal{T}_1)$ if $v\in \calv_1$, and
$R_1(E^*_v(\mathcal{T}))=0$ otherwise).

For any $\calL\in \pic(Z)$ and any $l'\in L'(\mathcal{T})$ it satisfies
\begin{equation*}
c_1(r(\calL))=R_1(c_1(\calL)).
\end{equation*}

In particular, we have the following commutative diagram as well:

\begin{equation*}  
\begin{picture}(200,40)(30,0)
\put(50,37){\makebox(0,0)[l]{$
\ \ \eca^{l'}(Z)\ \ \ \ \ \stackrel{c^{l'}(Z)}{\longrightarrow} \ \ \ \pic^{l'}(Z)$}}
\put(50,8){\makebox(0,0)[l]{$
\eca^{R_1(l')}(Z_1)\ \ \stackrel{c^{R_1(l')}(Z_1)}{\longrightarrow} \  \pic^{R_1(l')}(Z_1)$}}
\put(162,22){\makebox(0,0){$\downarrow \, $\tiny{$r$}}}
\put(78,22){\makebox(0,0){$\downarrow \, $\tiny{$\fr$}}}
\end{picture}
\end{equation*}

We can consider instead of the `total' Abel map $c^{l'}(Z)$ only its restriction above a fixed fiber of $r$.

That is, we fix some  $\mfl\in \pic^{R_1(l')}(Z_1)$, and we study the restriction of $c^{l'}(Z)$ to $(r\circ c^{l'}(Z))^{-1}(\mfl)\to r^{-1}(\mfl)$.

 The subvariety $(r\circ c^{l'}(Z))^{-1}(\mfl)
=(c^{R_1(l')}(Z_1) \circ \fr)^{-1}(\mfl) \subset \eca^{l'}(Z)$ is denoted by $\eca^{l', \mfl}(Z)$.

Let us denote the relative Abel map by $c^{l', \mfl}(Z): \eca^{l', \mfl}(Z) \to r^{-1}(\mfl)$ in the following.

\begin{theorem}\cite{R}\label{relativspace}
Fix an arbitrary singularity $\tX$ a Chern class $l'\in -\calS'$, an integer effective cycle $Z\geq E$ and a cycle $Z_1 \leq Z$ and let's have a line bundle  $\mfl\in \pic^{R(l')}(Z_1)$.
Assume that  $\eca^{l', \mfl}(Z)$ is nonempty, then it is smooth of dimension $h^1(Z_1,\mfl)  - h^1(\calO_{Z_1})+ (l', Z)$ and irreducible.
\end{theorem}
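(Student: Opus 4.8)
The plan is to reduce the statement to the known smoothness and irreducibility of the fibered Abel map over a point (Theorem \ref{relativspace} for the case $Z_1$ itself, or rather the absolute statements from \S\ref{ss:4.1}), via a deformation/fibration argument over the base $r^{-1}(\mfl)$. The key observation is that $\eca^{l',\mfl}(Z) = (c^{R_1(l')}(Z_1)\circ \fr)^{-1}(\mfl)$ sits inside $\eca^{l'}(Z)$, which by \eqref{eq:empty} and the discussion following it is a smooth variety of dimension $(l',Z)$, and that the map $\fr\colon \eca^{l'}(Z)\to \eca^{R_1(l')}(Z_1)$ together with $c^{R_1(l')}(Z_1)$ exhibits a nice structure. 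First I would describe $\eca^{l',\mfl}(Z)$ set-theoretically: a Cartier divisor $D\in\eca^{l'}(Z)$ lies in $\eca^{l',\mfl}(Z)$ precisely when the line bundle $\calO_Z(D)$ restricts to $\mfl$ on $Z_1$, equivalently when the part of $D$ "seen by $Z_1$" has the prescribed isomorphism class.

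Next I would set up the local structure. Over the open locus $r^{-1}(\mfl)$ (which is an affine space, being a torsor for $\pic^0$ relative to $Z_1$-data) one wants to show the total space is smooth and irreducible by a base-change argument: the fiber of $c^{l',\mfl}(Z)$ over a point $\calL\in r^{-1}(\mfl)$ is exactly $c^{-1}(\calL)\subset\eca^{l'}(Z)$, the fiber of the ordinary Abel map, which by \eqref{eq:dimfiber} is smooth, irreducible, quasiprojective of dimension $(l',Z)+h^1(Z,\calL)-h^1(\calO_Z)$ whenever it is nonempty. The subtlety is that this fiber dimension $h^1(Z,\calL)$ could jump as $\calL$ varies in $r^{-1}(\mfl)$. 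Here I would invoke Proposition \ref{redcycleF}: for the \emph{relatively generic} analytic structure the cohomology $h^1(Z,\calL)$ is constant and equal to $h^1(Z_1,\mfl)$ for $\calL$ in (a dense open of, hence by semicontinuity and the exact sequence relating $Z$ and $Z_1$, all of) the relevant fiber; combining with the fact that $r^{-1}(\mfl)$ has dimension $h^1(\calO_Z)-h^1(\calO_{Z_1})$, one gets
\[
\dim \eca^{l',\mfl}(Z) = \big(h^1(\calO_Z)-h^1(\calO_{Z_1})\big) + \big((l',Z)+h^1(Z_1,\mfl)-h^1(\calO_Z)\big) = h^1(Z_1,\mfl)-h^1(\calO_{Z_1})+(l',Z),
\]
as claimed. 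Smoothness then follows because the total space maps to the smooth base $r^{-1}(\mfl)$ with equidimensional smooth fibers (one would phrase this via the Jacobian criterion on $\eca^{l'}(Z)$, using that $\eca^{l'}(Z)$ is smooth and that $r\circ c^{l'}(Z)$ is, after the generic-structure input, flat).

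For irreducibility I would argue that the total space is fibered over the irreducible base $r^{-1}(\mfl)$ with irreducible fibers of constant dimension, hence is itself irreducible (a standard fact once flatness/openness of the map is in hand). The main obstacle I anticipate is precisely controlling the possible jumping of $h^1(Z,\calL)$ over the base, i.e. establishing the flatness of $c^{l',\mfl}(Z)$ and the generic constancy of fiber dimension; this is where the relatively generic hypothesis of \cite{R} must be used in an essential way — on a non-generic analytic structure the fiber dimension really can jump and the conclusion can fail. The cleanest route is probably to cite or reprove the relevant semicontinuity-plus-genericity statement from \cite{R} directly, then deduce smoothness and irreducibility formally from the fibration structure over $r^{-1}(\mfl)$.
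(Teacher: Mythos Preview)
There is a genuine gap in your approach. The theorem is stated for an \emph{arbitrary} singularity $\tX$; there is no relatively generic hypothesis to invoke. You yourself note that ``on a non-generic analytic structure the fiber dimension really can jump and the conclusion can fail'' --- but the conclusion does \emph{not} fail, so your proposed route via fibering $c^{l',\mfl}(Z)$ over $r^{-1}(\mfl)\subset\pic^{l'}(Z)$ cannot be the right one. The Abel-map fibers $(c^{l'}(Z))^{-1}(\calL)$ genuinely change dimension as $\calL$ moves in $r^{-1}(\mfl)$, and Proposition~\ref{redcycleF} does not help here (it requires $h^1(\calO_{Z_1})=h^1(\calO_{Z})$, which is not assumed).

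The argument in \cite{R} fibers in the other direction of the commutative square: one uses the restriction of divisors $\fr:\eca^{l'}(Z)\to\eca^{R_1(l')}(Z_1)$ and writes
\[
\eca^{l',\mfl}(Z)=\fr^{-1}\bigl((c^{R_1(l')}(Z_1))^{-1}(\mfl)\bigr).
\]
The base $(c^{R_1(l')}(Z_1))^{-1}(\mfl)$ is smooth and irreducible of dimension $h^0(Z_1,\mfl)-h^0(\calO_{Z_1})=(l',Z_1)+h^1(Z_1,\mfl)-h^1(\calO_{Z_1})$ by \eqref{eq:dimfiber}. The fibers of $\fr$ are affine spaces of dimension $(l',Z)-(l',Z_1)$: lifting a Cartier divisor from $Z_1$ to $Z$ is a purely local, affine-linear problem governed by the nilpotent ideal $\calO_Z(-Z_1)$, and involves no cohomology of line bundles at all. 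Summing gives the claimed dimension, and smoothness and irreducibility follow because an affine bundle over a smooth irreducible base is smooth and irreducible. No genericity is needed anywhere.
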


Let's recall from \cite{R} the analouge of the theroems about dominance of Abel maps in the relative setup:

\begin{definition}\cite{R}
Fix an arbitrary singularity $\tX$, a Chern class $l'\in -\calS'$, an integer effective cycle $Z\geq E$, a cycle $Z_1 \leq Z$ and a line bundle $\mfl\in \pic^{R_1(l')}(Z_1)$ as above.
We say that the pair $(l',\mfl ) $ is {\it relative
dominant} on the cycle $Z$, if the closure of $ r^{-1}(\mfl)\cap \im(c^{l'}(Z))$ is $r^{-1}(\mfl)$.
\end{definition}

\begin{theorem}\label{th:dominantrel}\cite{R}
 One has the following facts:

(1) If $(l',\mfl)$ is relative dominant on the cycle $Z$, then $ \eca^{l', \mfl}(Z)$ is
nonempty and $h^1(Z,\calL)= h^1(Z_1,\mfl)$ for any
generic line bundle $\calL\in r^{-1}(\mfl)$.

(2) $(l',\mfl)$ is relative dominant on the cycle $Z$,  if and only if for all
 $0<l\leq Z$, $l\in L$ one has
$$\chi(-l')- h^1(Z_1, \mfl) < \chi(-l'+l)-
 h^1((Z-l)_1, \mfl(-l)).$$, where we denote $(Z-l)_1 = \min(Z-l, Z_1)$.
\end{theorem}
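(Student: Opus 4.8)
To prove part (1), I would first record two dimension facts. The restriction $r\colon\pic^{l'}(Z)\to\pic^{R_1(l')}(Z_1)$ is surjective, since the kernel of $\calO_Z^*\to\calO_{Z_1}^*$ is supported on a curve and carries no $H^2$; applying the same remark to $\pic^0$ shows that $r^{-1}(\mfl)$ is a nonempty irreducible affine space of dimension $h^1(\calO_Z)-h^1(\calO_{Z_1})$. Relative dominance forces $\eca^{l',\mfl}(Z)\neq\emptyset$, and by Theorem~\ref{relativspace} this variety is irreducible, so $c^{l',\mfl}(Z)$ is a dominant morphism of irreducible varieties; hence its generic fibre has dimension
\[
\dim\eca^{l',\mfl}(Z)-\dim r^{-1}(\mfl)=h^1(Z_1,\mfl)+(l',Z)-h^1(\calO_Z).
\]
A generic point of $r^{-1}(\mfl)$ lies in the image, and the fibre there over a bundle $\calL$ is $c^{-1}(\calL)$, of dimension $h^0(Z,\calL)-h^0(\calO_Z)=(l',Z)+h^1(Z,\calL)-h^1(\calO_Z)$ by~\eqref{eq:dimfiber}. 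Equating the two expressions yields $h^1(Z,\calL)=h^1(Z_1,\mfl)$, which is part (1).

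For part (2) the plan is to translate ``$\calL$ has no fixed components'' into a numerical inequality via Riemann--Roch on non-reduced cycles. For $0<l\le Z$, twisting the inclusion $\calO_{\tX}(-Z)\subset\calO_{\tX}(-l)$ down to $Z$ gives $0\to\calL(-l)|_{Z-l}\to\calL\to\calL|_l\to 0$, in which $\calL(-l)|_{Z-l}$ has Chern class $l'-l$ on $Z-l$ and whose $H^0$ is exactly the space of sections of $\calL$ vanishing along $l$. Thus $\calL\in\im(c^{l'}(Z))$ if and only if $h^0(Z,\calL)>h^0(Z-l,\calL(-l)|_{Z-l})$ for every $0<l\le Z$. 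Using Riemann--Roch together with $\chi(l,\calL|_l)=(l',l)+\chi(\calO_l)$ and the identity $\chi(\calO_l)=\chi(l)$, I would rewrite the difference as
\[
h^0(Z,\calL)-h^0(Z-l,\calL(-l)|_{Z-l})=\big(\chi(-l'+l)-\chi(-l')\big)+h^1(Z,\calL)-h^1\big(Z-l,\calL(-l)|_{Z-l}\big),
\]
so that membership $\calL\in\im(c^{l'}(Z))$ becomes: for all $0<l\le Z$, $\ \chi(-l')-h^1(Z,\calL)<\chi(-l'+l)-h^1(Z-l,\calL(-l)|_{Z-l})$. Finally, relative dominance is equivalent to ``a generic $\calL\in r^{-1}(\mfl)$ lies in $\im(c^{l'}(Z))$'', because the constructible set $\im(c^{l',\mfl}(Z))=r^{-1}(\mfl)\cap\im(c^{l'}(Z))$ is dense in the irreducible $r^{-1}(\mfl)$ precisely when it contains a dense open subset.

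The ``only if'' direction of part (2) is then immediate: since $\calL(-l)|_{Z-l}$ restricts to $\mfl(-l)|_{(Z-l)_1}$ on $(Z-l)_1$, surjectivity of $H^1$-restrictions gives $h^1(Z-l,\calL(-l)|_{Z-l})\ge h^1((Z-l)_1,\mfl(-l))$ always; substituting $h^1(Z,\calL)=h^1(Z_1,\mfl)$ (part (1)) into the displayed criterion for a generic $\calL\in r^{-1}(\mfl)$ and weakening $h^1(Z-l,\calL(-l)|_{Z-l})$ down to $h^1((Z-l)_1,\mfl(-l))$ produces the asserted strict inequalities. For the ``if'' direction I would argue by contradiction: assuming the numerical condition but non-dominance, a generic $\calL\in r^{-1}(\mfl)$ has a nonzero fixed cycle, and by semicontinuity of ranks combined with the irreducibility of $r^{-1}(\mfl)$ this fixed cycle has a common generic value $l_0>0$ with $l_0\le Z$, so that $h^0(Z,\calL)=h^0(Z-l_0,\calL(-l_0)|_{Z-l_0})$ and, by maximality of $l_0$, the bundle $\calL(-l_0)|_{Z-l_0}$ is fixed-component free on $Z-l_0$, i.e.\ lies in $\im(c^{l'-l_0}(Z-l_0))$ and in the fibre of its restriction map over $\mfl(-l_0)|_{(Z-l_0)_1}$. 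The next step is to show that $\calL\mapsto\calL(-l_0)|_{Z-l_0}$ maps $r^{-1}(\mfl)$ dominantly onto that fibre, so that the reduced data $(l'-l_0,\mfl(-l_0)|_{(Z-l_0)_1})$ is relative dominant on $Z-l_0$; then part (1) applied to this reduced situation gives $h^1(Z-l_0,\calL(-l_0)|_{Z-l_0})=h^1((Z-l_0)_1,\mfl(-l_0))$ for generic $\calL$. Feeding $h^0(Z,\calL)=h^0(Z-l_0,\calL(-l_0)|_{Z-l_0})$ into the Riemann--Roch identity above and using $h^1(Z,\calL)\ge h^1(Z_1,\mfl)$ then yields $\chi(-l')-h^1(Z_1,\mfl)\ge\chi(-l'+l_0)-h^1((Z-l_0)_1,\mfl(-l_0))$, contradicting the numerical condition at $l=l_0$.

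The main obstacle I anticipate is precisely this last reduction step: verifying that, after passing to $Z-l_0$, the restriction $\calL\mapsto\calL(-l_0)|_{Z-l_0}$ is still dominant onto the relevant fibre of the restriction map, equivalently that the reduced data remains relative dominant. This is exactly the point at which the truncation $(Z-l)_1=\min(Z-l,Z_1)$ is forced on us, and handling it correctly requires a careful chase through the commuting restriction maps among the affine spaces $r^{-1}(\cdot)$, using surjectivity of the $\pic^0$-restrictions and the irreducibility and dimension statements of Theorem~\ref{relativspace}; the remainder of the argument is routine Riemann--Roch bookkeeping on non-reduced cycles.
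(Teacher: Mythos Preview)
The paper does not give its own proof of this theorem: it is quoted verbatim from \cite{R} and used as a black box. So there is no ``paper's proof'' to compare against; I will simply assess your argument on its merits.

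Your proof of part (1) is correct and is the standard fibre--dimension count, combining Theorem~\ref{relativspace} with \eqref{eq:dimfiber}. Your ``only if'' direction in (2) is also fine: substituting $h^1(Z,\calL)=h^1(Z_1,\mfl)$ into the Riemann--Roch identity and using the trivial bound $h^1(Z-l,\calL(-l))\ge h^1((Z-l)_1,\mfl(-l))$ gives the inequality.

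For the ``if'' direction you have the right strategy and have correctly isolated the one nontrivial step, but you overstate its difficulty. The surjectivity you need --- that the affine map
\[
r^{-1}(\mfl)\longrightarrow r^{-1}\big(\mfl(-l_0)|_{(Z-l_0)_1}\big),\qquad \calL\longmapsto \calL(-l_0)|_{Z-l_0},
\]
is onto --- is equivalent, after linearising, to surjectivity of
\[
\ker\big(H^1(\calO_Z)\to H^1(\calO_{Z_1})\big)\longrightarrow \ker\big(H^1(\calO_{Z-l_0})\to H^1(\calO_{(Z-l_0)_1})\big).
\]
Dualise using $H^1(\calO_A)^*\cong H^0(\calO_{\tX}(K+A))/H^0(\calO_{\tX}(K))$: the dual statement is \emph{injectivity} of
\[
H^0(K+Z-l_0)/H^0(K+(Z-l_0)_1)\;\hookrightarrow\;H^0(K+Z)/H^0(K+Z_1),
\]
i.e.\ that any form with pole $\le Z-l_0$ which also has pole $\le Z_1$ already has pole $\le\min(Z-l_0,Z_1)=(Z-l_0)_1$. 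That is immediate from the definition of the $\min$. This is precisely where the truncation $(Z-l)_1=\min(Z-l,Z_1)$ earns its keep, and once you observe it your induction closes with no further obstacle. The remaining Riemann--Roch bookkeeping is exactly as you wrote it.
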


\begin{theorem}\label{th:hegy2rel}\cite{R}
Fix an arbitrary singularity $\tX$, a Chern class $l'\in -\calS'$, an integer effective cycle $Z\geq E$, a cycle $Z_1 \leq Z$ and a line bundle $\mfl\in \pic^{R_1(l')}(Z_1)$ as in Theorem \ref{th:dominantrel}. 

1) For any $\calL\in r^{-1}(\mfl)$ one has
\begin{equation*}\label{eq:genericLrel}
\begin{array}{ll}h^1(Z,\calL)\geq \chi(-l')-
\min_{0\leq l\leq Z,\ l\in L} \{\,\chi(-l'+l) -
h^1((Z-l)_1, \mfl(-l))\, \}.\end{array}\end{equation*}
Furthermore, if $\calL$ is generic in $r^{-1}(\mfl)$
then we have equality.

2)  If $\calL\in r^{-1}(\mfl)$ is a generic line bundle on the image of the relative Abel map $\im(c^{l', \mfl}(Z))$ then one has $$h^1(Z, \calL) = h^1(Z_1, \mfl) + \dim(r^{-1}(\mfl)) - \dim(\im(c^{l', \mfl}(Z))).$$
In particular if the relative Abel map is not dominant then we have:
\begin{equation*}
\dim(r^{-1}(\mfl)) - \dim(\im(c^{l', \mfl}(Z))) > \chi(-l') - h^1(Z_1, \mfl)  - \min_{0\leq l\leq Z,\ l\in L} (\chi(-l'+l) - h^1((Z-l)_1, \mfl(-l))).
\end{equation*}

\end{theorem}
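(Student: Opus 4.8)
\medskip
\noindent\emph{Proof idea.}
Write $g_0:=\chi(-l')-\min_{0\le l\le Z,\,l\in L}\{\chi(-l'+l)-h^1((Z-l)_1,\mfl(-l))\}$, the expression on the right of part~1; at $l=0$ the bracket equals $\chi(-l')-h^1(Z_1,\mfl)$, so $g_0\ge h^1(Z_1,\mfl)$. Since $r^{-1}(\mfl)$ is a torsor under $\ker(\pic^0(Z)\to\pic^0(Z_1))$, hence irreducible, and $\calL\mapsto h^1(Z,\calL)$ is upper semicontinuous on it, the inequality $h^1(Z,\calL)\ge g_0$ for all $\calL\in r^{-1}(\mfl)$ is automatic once one knows the \emph{generic} value of $h^1$ on $r^{-1}(\mfl)$ is $g_0$; so part~1 reduces to computing that generic value. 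I also record a lemma to be used twice: if $(l',\mfl)$ is not relative dominant, then some vertex $v_0$ is a fixed component of the generic $\calL\in r^{-1}(\mfl)$. Indeed, using $\im(c^{l',\mfl}(Z))=\{\calL\in r^{-1}(\mfl):H^0(Z,\calL)_{reg}\ne\emptyset\}$, non-dominance forces a generic $\calL$ to satisfy $H^0(Z,\calL)=\bigcup_v H^0(Z-E_v,\calL(-E_v))$; a $\C$-vector space is never a finite union of proper subspaces, so $H^0(Z-E_v,\calL(-E_v))=H^0(Z,\calL)$ for some $v$, and by irreducibility of $r^{-1}(\mfl)$ one vertex $v_0$ works on a dense open set.

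For the generic value in part~1 the plan is to induct on $Z$, paralleling the non-relative arguments of \cite{NNI} but keeping $\calL|_{Z_1}=\mfl$ fixed so that the cohomology of any restriction of $\calL$ to a subcycle of $Z_1$ is governed by $\mfl$. If $(l',\mfl)$ is relative dominant, Theorem~\ref{th:dominantrel}(2) gives that the minimum in $g_0$ is attained only at $l=0$, so $g_0=h^1(Z_1,\mfl)$, which by Theorem~\ref{th:dominantrel}(1) is the generic value of $h^1$ — this is the base case. If $(l',\mfl)$ is not relative dominant, take $v_0$ as in the lemma: for generic $\calL$ one has $h^1(Z,\calL)=h^0(Z,\calL)-(l',Z)-\chi(\calO_Z)=h^0(Z-E_{v_0},\calL(-E_{v_0}))-(l',Z)-\chi(\calO_Z)$, and since $\calL(-E_{v_0})|_{(Z-E_{v_0})_1}$ is the fixed bundle $\mfl(-E_{v_0})$ while $\calL(-E_{v_0})$ sweeps out the relevant fibre over $Z-E_{v_0}$ as $\calL$ varies, the induction hypothesis computes $h^1(Z-E_{v_0},\calL(-E_{v_0}))$; additivity of $\chi$ turns this back into $g_0$, the $\min$ over $Z$ matching the $\min$ over $Z-E_{v_0}$ after the Riemann--Roch shift.

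For part~2, first $r$ is surjective on $\pic^0$ (the cokernel of $H^1(\calO_Z)\to H^1(\calO_{Z_1})$ embeds in $H^2$ of a sheaf supported on a curve, hence vanishes), so $r^{-1}(\mfl)$ is a translate of a subspace of dimension $h^1(\calO_Z)-h^1(\calO_{Z_1})$. By Theorem~\ref{relativspace}, $\eca^{l',\mfl}(Z)$ is irreducible of dimension $h^1(Z_1,\mfl)-h^1(\calO_{Z_1})+(l',Z)$, so $\im(c^{l',\mfl}(Z))$ is irreducible; and since $\eca^{l',\mfl}(Z)=(c^{l'}(Z))^{-1}(r^{-1}(\mfl))$, the fibre of $c^{l',\mfl}(Z)$ over any $\calL\in r^{-1}(\mfl)$ equals that of $c^{l'}(Z)$, of dimension $(l',Z)+h^1(Z,\calL)-h^1(\calO_Z)$ by (\ref{eq:dimfiber}). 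Subtracting the generic fibre dimension (with $\calL$ generic in the image) from $\dim\eca^{l',\mfl}(Z)$ gives $\dim\im(c^{l',\mfl}(Z))=h^1(Z_1,\mfl)-h^1(\calO_{Z_1})+h^1(\calO_Z)-h^1(Z,\calL)$, i.e. $h^1(Z,\calL)=h^1(Z_1,\mfl)+\dim r^{-1}(\mfl)-\dim\im(c^{l',\mfl}(Z))$, which is the stated formula.

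For the final assertion, by part~1 the generic value of $h^1$ on $r^{-1}(\mfl)$ is $g_0$, equivalently the generic value of $h^0(Z,\cdot)$ is $N:=g_0+(l',Z)+\chi(\calO_Z)$; combined with the part~2 formula, the inequality $\dim r^{-1}(\mfl)-\dim\im(c^{l',\mfl}(Z))>\chi(-l')-h^1(Z_1,\mfl)-\min(\cdots)=g_0-h^1(Z_1,\mfl)$ is equivalent to $h^1(Z,\calL)>g_0$ for $\calL$ generic in the image, and part~1 gives $\ge g_0$. Suppose for contradiction that $h^1(Z,\calL_0)=g_0$, i.e. $h^0(Z,\calL_0)=N$, for some $\calL_0\in\im(c^{l',\mfl}(Z))$; with $v_0$ as in the lemma, $h^0(Z-E_{v_0},\cdot(-E_{v_0}))=h^0(Z,\cdot)$ on a dense open set, so its generic value on $r^{-1}(\mfl)$ is also $N$, whence upper semicontinuity forces $h^0(Z-E_{v_0},\calL_0(-E_{v_0}))\ge N=h^0(Z,\calL_0)$, hence equality, making $E_{v_0}$ a fixed component of $\calL_0$ and $H^0(Z,\calL_0)_{reg}=\emptyset$ — contradicting $\calL_0\in\im(c^{l',\mfl}(Z))$. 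Thus $h^1(Z,\calL)>g_0$ for every, in particular generic, $\calL$ in the image, giving the strict inequality. The step I expect to be the main obstacle is the generic-value computation in part~1 (everything else is a dimension count plus the union-of-subspaces observation): its non-dominant case is the inductive peeling-off of fixed components together with the Riemann--Roch bookkeeping needed to match the two $\min$-expressions, resting ultimately on the relative dominant case supplied by Theorem~\ref{th:dominantrel}.
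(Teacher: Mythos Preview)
The paper does not prove this theorem: it is quoted from \cite{R} and stated without proof in Section~5, which is explicitly a summary of results from that reference. So there is no in-paper proof to compare against.

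Your outline is essentially the standard route (the relative analogue of the arguments in \cite{NNI}) and is correct in its architecture. Part~2 is a clean dimension count using Theorem~\ref{relativspace} and (\ref{eq:dimfiber}), and your derivation is right. The fixed-component lemma and the strict-inequality argument for the non-dominant case are also correct; the observation that upper semicontinuity of $h^0(Z-E_{v_0},\,\cdot(-E_{v_0}))$ on $r^{-1}(\mfl)$ forces $E_{v_0}$ to be a fixed component of any $\calL_0\in\im(c^{l',\mfl}(Z))$ with $h^1(\calL_0)=g_0$ is exactly the right mechanism, and it cleanly yields the contradiction with $H^0(Z,\calL_0)_{reg}\ne\emptyset$.

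The one place to be careful is the inductive step in part~1. When you peel off $E_{v_0}$, the new Chern class $l'-E_{v_0}$ need not lie in $-\calS'$, so the induction must be run for a statement slightly more general than the one displayed (the hypothesis $l'\in-\calS'$ is only there for the Abel-map interpretation, not for the $h^1$-formula itself). You also need that as $\calL$ ranges over $r^{-1}(\mfl)$, the bundle $\calL(-E_{v_0})|_{Z-E_{v_0}}$ sweeps out a dense subset of the fibre over $\mfl(-E_{v_0})|_{(Z-E_{v_0})_1}$; this follows from surjectivity of the restriction maps on $H^1(\calO)$ together with a compatibility check on $\min(Z-E_{v_0},Z_1)$, but it should be stated. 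These are bookkeeping points rather than gaps in the idea, and you have already correctly flagged this step as the main obstacle.
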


\subsection{Relatively generic analytic structures}

In the following we recall the results from \cite{R} about relatively generic analytic structures:

Let us fix a a topological type, in other words a resolution graph $\mathcal{T}$ with vertex set $\calv$,
we consider a partition $\calv = \calv_1 \cup  \calv_2$.

They define two (not necessarily connected) subgraphs $\mathcal{T}_1$ and $\mathcal{T}_2$.

We call the intersection of an exceptional divisor from $ \calv_1 $ with an exceptional divisor from  $ \calv_2 $ a
{\it contact point}.

For any cycle $Z\in L=L(\mathcal{T})$ we write $Z=Z_1+Z_2$,
where $Z_i\in L(\mathcal{T}_i)$ is
supported in $\mathcal{T}_i$ ($i=1,2$).
Furthermore, parallel to the restrictions
$r_i : \pic(Z)\to \pic(Z_i)$ one also has the (cohomological) restriction operators
  $R_i : L'(\mathcal{T}) \to L_i':=L'(\mathcal{T}_i)$
(defined as $R_i(E^*_v(\mathcal{T}))=E^*_v(\mathcal{T}_i)$ if $v\in \calv_i$, and
$R_i(E^*_v(\mathcal{T}))=0$ otherwise).

\noindent

For any $l'\in L'(\mathcal{T})$ and any $\calL\in \pic^{l'}(Z)$ it satisfies $c_1(r_i(\calL))=R_i(c_1(\calL))$.
In the following for the sake of simplicity we will denote $r = r_1$ and $R = R_1$.

\noindent

Furthermore fix an analytic type $\tX_1$ for the subgraph $\mathcal{T}_1$ (if $\mathcal{T}_1$ is disconnected, then an analytic type for each of its connected components).

For each vertex $v_2 \in \calv_2$ which has got a neighbour $n(v_2)$ in $\calv_1$ we fix a cut $D_{v_2}$ on $\tX_1$, along we glue the exceptional divisor $E_{v_2}$.
This means that $D_{v_2}$ is a divisor, which intersects the exceptional divisor $E_{v_1}$ transversally in one point and we will glue the tubular neighborhood of the exceptional divisor $E_{v_2}$ in a way, such that $E_{v_2} \cap \tX_1 = D_{v_2}$.

If for some vertex $v_2 \in \calv_2$, which has got a neighbour in $\calv_1$ we don't say explicitely what is the fixed cut $D_{v_2}$, then it should be understood in the way that we glue the exceptional divisor $E_{v_2}$ along a generic cut, in more precise terms we fix a generic divisor $ D_{v_2} \in \eca^{-E_{n(v_2)}^*}(\tX_1)$ in this case. 

\noindent

Let us glue the tubular neihgbourhoods of the exceptional divisors $E_{v_2}, v_2 \in \calv_2$  with the above conditions generically to the fixed resolution $\tX_1$.
We get a singularity $\tX$ with resolution graph $\mathcal{T}$ and we say that $\tX$ is a relatively generic singularity corresponding to the analytical structure $\tX_1$ and the cuts $D_{v_2}$. 

For a more precise and rigorous explanation of relative genericity look at \cite{R}, we have the following theorems with this setup from \cite{R}:

\begin{theorem}\cite{R}\label{relgen1}
Let us have the setup as above and let us have furthermore an effective cycle $Z$ on $\tX$ with $Z = Z_1 + Z_2$, where $|Z_1| \subset \calv_1$ and $|Z_2| \subset \calv_2$.

Let us have a natural line bundle $\calL= \calO_{\tX}(l')$ on $\tX$, such that $ l' = - \sum_{v \in \calv} a_v E_v \in L'_{\mathcal{T}}$, with $a_v > 0, v \in \calv_2 \cap |Z|$,  and denote $c_1 (\calL | Z) = l'_ Z \in L'_{|Z|}$, furthermore let us denote $\mfl = \calL | Z_1$, then we have the following:

We have $H^0(Z,\calL)_{reg} \not=\emptyset$ if and only if $(l',\mfl)$ is relative dominant on the cycle $Z$ or equivalently:

\begin{equation*}
\chi(-l')- h^1(Z_1, \mfl) < \chi(-l'+l)-  h^1((Z-l)_1, \mfl(-l)),
\end{equation*}
for all $0 < l \leq Z$.
\end{theorem}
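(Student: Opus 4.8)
The plan is to reduce the equivalence to two ingredients: the relative Abel map dichotomy of Theorem~\ref{th:dominantrel}, and the defining feature of a relatively generic gluing, namely that the restricted natural line bundle $\calO_Z(l')=\calL|Z$ occupies \emph{generic position} inside the fibre $r^{-1}(\mfl)$ of $r\colon\pic(Z)\to\pic(Z_1)$ over $\mfl=\calL|Z_1$.

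First I would record the elementary reductions. The restriction $r$ is surjective and affine-linear — surjectivity follows from $H^1(\calO_Z)\twoheadrightarrow H^1(\calO_{Z_1})$, which comes from the structure-sheaf sequence $0\to\calO_{Z-Z_1}(-Z_1)\to\calO_Z\to\calO_{Z_1}\to 0$ together with $H^2=0$ on a curve — so each fibre $r^{-1}(\mfl)$ is an affine space, in particular irreducible. Next, $r(\calO_Z(l'))=\calO_{\tX}(l')|_{Z_1}$, and by Lemma~\ref{lem:resNat} this last bundle depends only on $\tX_1$, the chosen cuts $\{D_{v_2}\}$ and $l'$, and \emph{not} on the generic gluing of the tubular neighbourhoods of $\{E_{v_2}\}_{v_2\in\calv_2}$; hence $r(\calO_Z(l'))=\mfl$ and $\calO_Z(l')\in r^{-1}(\mfl)$. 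Finally, by the discussion of Section~\ref{ss:4.1}, $H^0(Z,\calL)_{reg}\neq\emptyset$ is equivalent to $\calO_Z(l')\in\im(c^{l'}(Z))$, i.e. to $\calO_Z(l')\in r^{-1}(\mfl)\cap\im(c^{l'}(Z))$.

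The crux, and the step I expect to be the main obstacle, is the claim that for a relatively generic $\tX$ the point $\calO_Z(l')\in r^{-1}(\mfl)$ is generic — that it lies outside every proper closed subvariety of $r^{-1}(\mfl)$ determined by the fixed data $(\mathcal{T},Z,l',\tX_1,\{D_{v_2}\})$. This is exactly where the hypothesis $a_v>0$ for all $v\in\calv_2\cap|Z|$ enters, mirroring the positivity hypothesis of Theorem~\ref{th:OLD}(b): one must show that the map sending the gluing parameters of the neighbourhoods of $\{E_{v_2}\}_{v_2}$ to the induced class $\calO_Z(l')\in r^{-1}(\mfl)$ is dominant. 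The natural route — this is the technical heart of \cite{R} — is to differentiate this ``gluing $\mapsto$ restricted natural bundle'' map and identify its differential, by a local gluing/deformation computation, with a restriction map between cohomology groups of $\calO(l')$-twisted sheaves concentrated near the contact points; strict positivity of each $a_v$ with $v\in\calv_2\cap|Z|$ is precisely what makes this twist genuinely register the gluing at $E_{v_2}$, hence makes the differential surjective and the map dominant.

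Granting this genericity, both implications are formal. If $(l',\mfl)$ is relative dominant on $Z$, then by definition the closure of $r^{-1}(\mfl)\cap\im(c^{l'}(Z))$ is all of $r^{-1}(\mfl)$, so this constructible set, being dense in the irreducible variety $r^{-1}(\mfl)$ (and nonempty by Theorem~\ref{th:dominantrel}(1)), contains a dense open subset of it; the generic point $\calO_Z(l')$ lies in it, so $\calO_Z(l')\in\im(c^{l'}(Z))$ and $H^0(Z,\calL)_{reg}\neq\emptyset$. Conversely, if $(l',\mfl)$ is not relative dominant, then the closure of $r^{-1}(\mfl)\cap\im(c^{l'}(Z))$ is a proper closed subvariety of the irreducible $r^{-1}(\mfl)$, hence has dense complement; the generic point $\calO_Z(l')$ avoids it, so $\calO_Z(l')\notin\im(c^{l'}(Z))$ and $H^0(Z,\calL)_{reg}=\emptyset$. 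This gives the equivalence with relative dominance, and the displayed chain of inequalities is its numerical restatement via Theorem~\ref{th:dominantrel}(2), which completes the proof.
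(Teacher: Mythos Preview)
The paper does not give its own proof of this statement: it is recalled verbatim from \cite{R} as background, so there is no in-paper argument to compare against. Your proposal is a correct high-level outline of the strategy one expects in \cite{R} --- namely, reduce to the claim that for a relatively generic gluing the restricted natural bundle $\calO_Z(l')$ is a generic point of the fibre $r^{-1}(\mfl)$, and then invoke Theorem~\ref{th:dominantrel} --- and you correctly locate where the positivity hypothesis $a_v>0$ on $\calv_2\cap|Z|$ enters. You are honest that the dominance of the ``gluing parameters $\to$ line bundle'' map is the technical heart you do not carry out; since that is the entire content of the theorem beyond Theorem~\ref{th:dominantrel}, your write-up is a plausible sketch rather than a proof, but the architecture is sound.
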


\begin{theorem}\cite{R}\label{relgen2}
Let us have the same setup as in the previous theorem, then we have:

\begin{equation*}
h^1(Z, \calL) =  h^1(Z, \calL_{gen}),                                   
\end{equation*}
where $\calL_{gen}$ is a generic line bundle in $ r^{-1}(\mfl) \subset \pic^{l'_m}(Z)$, or equivalently:

\begin{equation*}
h^1(Z, \calL)= \chi(-l') - \min_{0 \leq l \leq Z}(\chi(-l'+l)-  h^1((Z-l)_1, \mfl(-l))).
\end{equation*}
\end{theorem}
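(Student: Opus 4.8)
The statement to prove is Theorem~\ref{relgen2}, which computes $h^1(Z,\calL)$ for a relatively generic analytic structure and identifies it with the generic value over the fiber $r^{-1}(\mfl)$.

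\medskip

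The plan is to deduce this directly from the results already recalled in the relative setup, namely Theorem~\ref{relativspace}, Theorem~\ref{th:dominantrel}, Theorem~\ref{th:hegy2rel}, and Theorem~\ref{relgen1}. The key observation is that relative genericity of $\tX$ with respect to $\tX_1$ and the cuts $D_{v_2}$ forces the restricted natural line bundle $\calL=\calO_{\tX}(l')$ to behave, cohomologically, exactly like a generic member of $\pic^{l'}(Z)$ lying in the fiber $r^{-1}(\mfl)$ over its restriction $\mfl=\calL|Z_1$. So the first step is to set $\calL_{gen}$ to be a generic line bundle in $r^{-1}(\mfl)$ and argue that $h^1(Z,\calL)=h^1(Z,\calL_{gen})$. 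For this one uses that under relatively generic gluing the pair $(\tX\text{-structure}, \calL)$ is, among all analytic structures with the fixed $\tX_1$ and fixed contact cuts, generic — hence semicontinuity of $h^1$ in families gives $h^1(Z,\calL)\le h^1(Z,\calL_{gen})$, while the reverse inequality is the content of part~1) of Theorem~\ref{th:hegy2rel} applied to $\calL$ (any $\calL\in r^{-1}(\mfl)$ satisfies the stated lower bound, and that lower bound is exactly the claimed formula). This pins down $h^1(Z,\calL)$ once we know $\calL_{gen}$ achieves the bound.

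\medskip

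The second step is to show that the generic $\calL_{gen}\in r^{-1}(\mfl)$ achieves the lower bound of Theorem~\ref{th:hegy2rel}(1), i.e. that
\[
h^1(Z,\calL_{gen})=\chi(-l')-\min_{0\le l\le Z,\,l\in L}\bigl(\chi(-l'+l)-h^1((Z-l)_1,\mfl(-l))\bigr).
\]
This is precisely the "furthermore" clause of Theorem~\ref{th:hegy2rel}(1), so there is essentially nothing to do except quote it, provided we have checked $\eca^{l',\mfl}(Z)\ne\emptyset$ or, if it is empty, handle that degenerate case separately. Nonemptiness is controlled by relative dominance via Theorem~\ref{relgen1}: if $(l',\mfl)$ is relative dominant on $Z$ then $\eca^{l',\mfl}(Z)\ne\emptyset$ by Theorem~\ref{th:dominantrel}(1), and in fact $h^1(Z,\calL_{gen})=h^1(Z_1,\mfl)$, which one checks agrees with the min-formula because the minimum is then attained at $l=0$. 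If $(l',\mfl)$ is not relative dominant, then by Theorem~\ref{th:hegy2rel}(2) the generic line bundle over the image of the relative Abel map — and hence the generic line bundle over all of $r^{-1}(\mfl)$, since being on the image is a closed condition that the generic point of $r^{-1}(\mfl)$ avoids only if the map is dominant — still satisfies the equality in part~1). Either way the min-formula holds for $\calL_{gen}$.

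\medskip

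The main obstacle, and the only genuinely non-formal point, is the first step: justifying that the restricted natural line bundle $\calL$ on the relatively generic $\tX$ is cohomologically generic in its $r$-fiber. This requires the machinery of \cite{R}: one considers the total space of the family of relatively generic gluings together with the tautological line bundle extending $\calL$, observes that $h^1$ is upper semicontinuous on this base, and that the relatively generic locus is dense; combined with the fact (also from \cite{R}, underlying Theorem~\ref{th:hegy2rel}) that the $r$-fiber $r^{-1}(\mfl)$ and its generic $h^1$ value are determined by $\tX_1$ and the contact data alone, one concludes $h^1(Z,\calL)=h^1(Z,\calL_{gen})$. Once this semicontinuity-plus-density argument is in place, the chain of quoted theorems closes the proof, and the equivalent closed formula follows by substituting the value of $h^1(Z,\calL_{gen})$ computed in step two.
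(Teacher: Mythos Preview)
The paper does not prove Theorem~\ref{relgen2}: it is quoted from \cite{R} without proof, as part of a block of results (Theorems~\ref{relgen1}--\ref{relgen3}) recalled for later use. So there is no proof in the present paper to compare against.

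Your outline is honest about where the real content lies. The inequality $h^1(Z,\calL)\ge h^1(Z,\calL_{gen})$ is indeed immediate from Theorem~\ref{th:hegy2rel}(1), and the closed formula for $h^1(Z,\calL_{gen})$ is the ``furthermore'' clause of that same theorem. What you call the ``main obstacle'' --- showing that on a relatively generic $\tX$ the restricted natural line bundle $\calL=\calO_{\tX}(l')$ actually achieves this generic value --- is precisely the substance of the theorem in \cite{R}, and your phrase ``this requires the machinery of \cite{R}'' is an accurate admission that the argument is not self-contained from the results stated here.

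One point you should sharpen: semicontinuity of $h^1$ over the family of gluings tells you only that $\lambda\mapsto h^1(Z_\lambda,\calL_\lambda)$ takes its minimum on a dense open set of gluing parameters; it does \emph{not} by itself compare that minimum to $h^1(Z,\calL_{gen})$, which is the generic value over the \emph{different} family $r^{-1}(\mfl)$ (with the analytic structure on $Z$ already fixed). To bridge the two one must show that varying the gluing moves $\calL_\lambda$ through a generic point of $r_\lambda^{-1}(\mfl)$ --- and this is exactly what \cite{R} proves, not a consequence of the theorems you cite. So your line ``semicontinuity of $h^1$ in families gives $h^1(Z,\calL)\le h^1(Z,\calL_{gen})$'' is not justified by the quoted statements; it \emph{is} the theorem. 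Once one grants that step (i.e.\ defers to \cite{R}), the rest of your reduction is correct.
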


\begin{theorem}\cite{R}\label{relgen3}
Let us have a natural line bundle $\calL= \calO_{\tX}(l')$ on $\tX$, such that $ l' = - \sum_{v \in \calv} a_v E_v$, and assume that  $a_v \neq 0$ if $ v \in \calv_2 \cap |Z|$. Let us denote $c_1 (\calL | Z) = l'_ Z \in  L'_{|Z|}$ and $\mfl = \calL | Z_1$.

Assume that $H^0(Z,\calL)_{reg} \not=\emptyset$, and pick an arbitrary $D \in (c^{l'_Z}(Z))^{-1}\calL \subset \eca^{l'_Z, \mfl}(Z)$, then $ c^{l'_m}(Z) : \eca^{l'_m, \mfl}(Z) \to r^ {-1}(\mfl  )$ is a submersion in $D$, and $h^1(Z,\calL) = h^1(Z_1, \mfl)$.

In particular the map $ c^{l'_Z}(Z) :  \eca^{l'_Z, \mfl}(Z) \to r^ {-1}(\mfl  )$ is dominant, which means that $(l'_m,\mfl)$ is relative dominant on the cycle $Z$, or equivalently:

\begin{equation*}
\chi(-l')- h^1(Z_1, \mfl) < \chi(-l'+l)-  h^1((Z-l)_1, \mfl(-l)),
\end{equation*}
for all $0 < l \leq Z$.
\end{theorem}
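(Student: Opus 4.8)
The plan is to reduce everything to the description of the tangent map of Abel maps from \cite{NNI}, feeding in the relative results already recalled. Throughout write $l'_Z:=c_1(\calL|_Z)$ (called $l'_m$ in the statement), $\mfl:=\calL|_{Z_1}$, and let $D\in(c^{l'_Z}(Z))^{-1}(\calL)\subset\eca^{l'_Z,\mfl}(Z)$ be the chosen divisor; denote by $c^{l'_Z,\mfl}(Z)\colon\eca^{l'_Z,\mfl}(Z)\to r^{-1}(\mfl)$ the relative Abel map.

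First I would dispose of the dominance and cohomology assertions. Since $H^0(Z,\calL)_{reg}\neq\emptyset$, Theorem \ref{relgen1} gives immediately that $(l'_Z,\mfl)$ is relative dominant on $Z$, which is precisely the last ("in particular") clause, and is equivalent to the displayed $\chi$-inequality for all $0<l\le Z$. Relative dominance says the function $l\mapsto\chi(-l'_Z+l)-h^1((Z-l)_1,\mfl(-l))$ attains its minimum over $0\le l\le Z$ strictly at $l=0$, so by Theorem \ref{th:dominantrel}(1) (equivalently Theorem \ref{th:hegy2rel}(1)) a generic line bundle $\calL_{gen}\in r^{-1}(\mfl)$ has $h^1(Z,\calL_{gen})=h^1(Z_1,\mfl)$; then Theorem \ref{relgen2} yields $h^1(Z,\calL)=h^1(Z,\calL_{gen})=h^1(Z_1,\mfl)$. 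It remains only to prove that $c^{l'_Z,\mfl}(Z)$ is a submersion at the particular point $D$.

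For that I would compute the differential. By \cite{NNI}, the differential of the total Abel map $\mathrm{d}c^{l'_Z}(Z)_D\colon T_D\eca^{l'_Z}(Z)\to T_\calL\pic^{l'_Z}(Z)=H^1(\calO_Z)$ has cokernel $H^1(Z,\calL)$; hence its kernel has dimension $\dim\eca^{l'_Z}(Z)-h^1(\calO_Z)+h^1(Z,\calL)$, which by \eqref{eq:dimfiber} equals $\dim c^{-1}(\calL)$. Since the fiber tangent space $T_D(c^{-1}(\calL))$ is always contained in this kernel and $c^{-1}(\calL)$ is smooth at $D$ (\cite{NNI}), we get $\ker\mathrm{d}c^{l'_Z}(Z)_D=T_D(c^{-1}(\calL))$. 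Now $\calL\in r^{-1}(\mfl)$ forces $c^{-1}(\calL)\subset\eca^{l'_Z,\mfl}(Z)$, and $\eca^{l'_Z,\mfl}(Z)$ is smooth of dimension $(l'_Z,Z)+h^1(Z_1,\mfl)-h^1(\calO_{Z_1})$ by Theorem \ref{relativspace}; therefore restricting $\mathrm{d}c^{l'_Z}(Z)_D$ to the subspace $T_D\eca^{l'_Z,\mfl}(Z)$ does not change the kernel, so
\begin{equation*}
\dim\mathrm{im}(\mathrm{d}c^{l'_Z,\mfl}(Z)_D)=\dim\eca^{l'_Z,\mfl}(Z)-\dim c^{-1}(\calL)=h^1(Z_1,\mfl)-h^1(\calO_{Z_1})+h^1(\calO_Z)-h^1(Z,\calL).
\end{equation*}
On the other hand $c^{l'_Z,\mfl}(Z)$ maps into $r^{-1}(\mfl)$, whose tangent space at $\calL$ is $\ker(H^1(\calO_Z)\to H^1(\calO_{Z_1}))$ of dimension $h^1(\calO_Z)-h^1(\calO_{Z_1})$ (the restriction being onto since the relevant $H^2$ vanishes). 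Substituting $h^1(Z,\calL)=h^1(Z_1,\mfl)$ from the previous step makes these two dimensions agree, so $\mathrm{d}c^{l'_Z,\mfl}(Z)_D$ is onto $T_\calL r^{-1}(\mfl)$, i.e. $c^{l'_Z,\mfl}(Z)$ is a submersion at $D$.

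I expect the only delicate point to be the tangent-map bookkeeping in the relative situation — checking carefully that $\ker\mathrm{d}c^{l'_Z,\mfl}(Z)_D=\ker\mathrm{d}c^{l'_Z}(Z)_D$ and that the image is confined to $T_\calL r^{-1}(\mfl)$ — which amounts to transporting the \cite{NNI} description of the differential of Abel maps through the commutative square relating $c^{l'}(Z)$, $\fr$ and $c^{R(l')}(Z_1)$; once that is in place everything is a dimension count on Theorems \ref{relativspace}, \ref{relgen1}, \ref{relgen2}. (Were one to avoid invoking Theorems \ref{relgen1}--\ref{relgen2}, the real work would instead be the identity $h^1(Z,\calL)=h^1(Z_1,\mfl)$, proved by induction on $Z_2$ and using that the cuts $D_{v_2}$ along which the $E_{v_2}$, $v_2\in\calv_2$, are glued are generic — the relative analogue of the generic-vanishing computations of \cite{NNII}, and the place where the relative-genericity hypothesis is genuinely consumed.)
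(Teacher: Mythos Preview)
The paper does not give its own proof of this statement: Theorem~\ref{relgen3}, like Theorems~\ref{relgen1} and~\ref{relgen2}, is quoted from \cite{R} without argument. So there is no proof in the paper to compare against.

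Your argument is internally correct, and the tangent--map dimension count for the submersion is clean: once you know $h^1(Z,\calL)=h^1(Z_1,\mfl)$, the equalities $\dim\eca^{l'_Z,\mfl}(Z)-\dim c^{-1}(\calL)=h^1(\calO_Z)-h^1(\calO_{Z_1})=\dim r^{-1}(\mfl)$ and the identification $\ker \mathrm{d}c^{l'_Z,\mfl}(Z)_D=T_D(c^{-1}(\calL))$ do the job. The one concern is logical order. You obtain dominance and the $h^1$--identity by invoking Theorems~\ref{relgen1} and~\ref{relgen2}; but in \cite{R} the natural flow is the reverse: the submersion statement of Theorem~\ref{relgen3} (proved directly from relative genericity, essentially by the Laufer--type integral computation of the differential of the Abel map and the fact that the gluing data for the $E_{v_2}$ are generic) is the primitive result, and the ``if and only if'' of Theorem~\ref{relgen1} as well as Theorem~\ref{relgen2} are consequences. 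If you are reconstructing \cite{R}, your last parenthetical is the real proof: establish $h^1(Z,\calL)=h^1(Z_1,\mfl)$ by induction on $Z_2$ using that the cuts $D_{v_2}$ are generic, and then run your dimension count. As written, your main argument is a correct deduction of \ref{relgen3} from \ref{relgen1}--\ref{relgen2}, but you should flag that this presumes those two are proved independently of \ref{relgen3}.
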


\begin{remark}

In the theorems above in any formula one can replace $l'$ with $l'_Z$, since for every $0 \leq l \leq Z$ one has $\chi(-l')- \chi(-l'+l) = \chi(-l'_Z)- \chi(-l'_Z+l) = -(l', l) - \chi(l)$.
\end{remark}

\section{Base points of the line bundle $\calO_Z(K + Z)$ on generic singularities}

In the following we prove a few lemmas about the base points of the line bundle $\calO_Z(K + Z)$, where $Z$ is an integer effective cycle on a generic singularity and 
$H^0(Z, K+Z)_{reg} \neq \emptyset$.

\begin{lemma}\label{baseI}
Assume that $\mathcal{T}$ is a rational homology sphere resolution graph and $\tX$ is a generic singularity corresponding to it, and let us have a cycle $Z$ on it, such that $|Z| = \calv$, and $H^0(\calO_{Z}( Z+K))_{reg} \neq \emptyset$.
The line bundle $\calL_{Z} = \calO_Z(K + Z)$ hasn't got a basepoint at the intersection point of exceptional divisors.
\end{lemma}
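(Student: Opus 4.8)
The plan is to reduce the statement to a cohomological computation governed by Theorem \ref{th:OLD} and the semigroup description of base points in \ref{bek:ansemgr}. Let $p = E_{u}\cap E_{w}$ be an intersection point of two exceptional components, both necessarily in $|Z| = \calv$. I would first pass to the natural language in which base points are visible: by Serre duality on $Z$, a section of $\calL_Z = \calO_Z(K+Z)$ vanishing at $p$ corresponds to a constraint on the dual side, and the base-point locus of $\calL_Z$ is encoded (as in the discussion around $\cali(l')$) by comparing $h^0$ or $h^1$ of $\calL_Z$ with that of $\calL_Z(-E_u)$, $\calL_Z(-E_w)$, and $\calL_Z(-E_u-E_w)$. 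Concretely, $p$ is a base point of $\calL_Z$ iff every section of $H^0(Z,\calL_Z)$ vanishing to first order along $E_u$ or along $E_w$ in fact vanishes at $p$ in the stronger Cartier sense; the first step is to make this precise via the exact sequences $0 \to \calL_Z(-E_v) \to \calL_Z \to \calL_Z|_{E_v} \to 0$ for $v \in \{u,w\}$ and the analogous one for $E_u+E_w$.

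Next I would translate everything into numerics. Using $H^0(\calO_Z(K+Z))_{reg} \neq \emptyset$, the line bundle $\calL_Z$ is in the image of an Abel map, so (\ref{eq:dimfiber}) applies and $h^1(Z,\calL_Z)$, $h^0(Z,\calL_Z)$ are controlled. For the restricted cycles $Z - E_u$, $Z - E_w$, $Z - E_u - E_w$, the relevant cohomology numbers $h^1(\calO_{Z'}(K+Z))$ are computed by Theorem \ref{th:OLD}(b) (or its $K+Z$ form) in terms of $\min\{\chi(\cdot)\}$ over the lattice; since $\tX$ is generic, these minima are purely combinatorial. The key inequality to establish is that the "expected" drop in $h^0$ when we twist down by $E_u$ (resp.\ $E_w$, resp.\ $E_u+E_w$) is strictly realized — i.e.\ $h^0(Z,\calL_Z) - h^0(Z, \calL_Z(-E_u-E_w)) > \big(h^0(Z,\calL_Z) - h^0(Z,\calL_Z(-E_u))\big) + \big(h^0(Z,\calL_Z) - h^0(Z,\calL_Z(-E_w))\big)$ would fail, which is exactly the statement that $p$ is not a base point. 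Equivalently, one shows the restriction map $H^0(Z,\calL_Z) \to \calL_Z|_{E_u} \oplus \calL_Z|_{E_w}$ near $p$, or rather the two-jet at $p$, is as large as possible.

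I would carry this out by an adjunction/vanishing argument: on each $E_v \cong \bP^1$ ($v \in \{u,w\}$), $\calL_Z|_{E_v}$ has degree $(K+Z, E_v) = -2 + (Z+ E_v, E_v) + \ldots$; using $Z \geq E$ and the genericity-forced inequalities from Theorem \ref{th:OLD} one checks this degree is large enough (or, when it is not, that $h^1$ on the complementary cycle does not obstruct) so that $H^0(Z,\calL_Z)$ surjects onto the relevant sky-scraper quotient at $p$. The genericity of the analytic structure enters precisely in guaranteeing that no "accidental" base point appears: the gluing is generic, so by Lemma \ref{lem:resNat} and the results of \cite{NNII} the restricted natural bundle $\calO_Z(K+Z)$ behaves like a generic bundle in its Picard class, and generic bundles of this degree have no base points at the (finitely many, rigid) intersection points.

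The main obstacle I anticipate is the bookkeeping of the two-variable twist: ruling out a base point at an intersection point is genuinely a "second order" condition (one must control the full local ring quotient $\calO_{\tX,p}/(x,y)$-type data, i.e.\ the $E_u+E_w$ twist), not just a one-component restriction, so the clean surjectivity on a single $\bP^1$ is not enough — one needs the two restrictions to be jointly surjective, which is where a genericity input (independence of the two gluings at $p$, or a dimension count via (\ref{eq:dimfiber}) applied to $\eca^{-E_u^*-E_w^*}(Z)$ versus $\eca^{-E_u^*}(Z)$ and $\eca^{-E_w^*}(Z)$) must be invoked carefully. I expect the proof to run: (1) set up the three exact sequences; (2) compute all four $h^1$'s via Theorem \ref{th:OLD}; (3) deduce the numerical identity $\big(h^1(\calO_Z) - h^1(\calO_{Z-E_u-E_w})\big) = \big(h^1(\calO_Z)-h^1(\calO_{Z-E_u})\big)+\big(h^1(\calO_Z)-h^1(\calO_{Z-E_w})\big)$ twisted appropriately — equivalently that $\chi$ is "additive enough" near the relevant minima — which forces the joint restriction to be surjective and hence $p$ not a base point.
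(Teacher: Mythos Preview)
Your plan has a genuine gap at its core: the numerical criterion you propose does not detect base points at the intersection point. Write $V = H^0(Z,\calL_Z)$, $V_u = H^0(Z,\calL_Z(-E_u))$, $V_w = H^0(Z,\calL_Z(-E_w))$, $V_{uw} = H^0(Z,\calL_Z(-E_u-E_w))$. One checks locally at $p$ that $V_u \cap V_w = V_{uw}$, so your inequality
\[
\dim V - \dim V_{uw} \;>\; (\dim V - \dim V_u) + (\dim V - \dim V_w)
\]
rewrites as $\dim(V_u + V_w) > \dim V$, which is always false. More fundamentally, the condition ``$s$ vanishes at $p$'' is \emph{not} captured by membership in $V_u + V_w$: at the sheaf level one does have $I_p \cdot \calL_Z = \calL_Z(-E_u) + \calL_Z(-E_w)$, but $H^0$ of a sum of subsheaves is larger than the sum of the $H^0$'s, so there can be sections vanishing at $p$ that lie in neither $V_u$ nor $V_w$. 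Twisting by $-E_u$ or $-E_w$ forces vanishing along an entire exceptional curve, which is a much stronger condition than vanishing at the single point $p$; no inclusion--exclusion among these twists recovers the pointwise evaluation map $H^0(Z,\calL_Z) \to \calL_Z \otimes k(p)$.

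The paper's argument avoids this by blowing up at $p$: on the new generic resolution $\tX_{new}$ the point $p$ is replaced by a new exceptional curve $E_{new}$, and ``$p$ is not a base point'' becomes exactly ``$E_{new}$ is not a fixed component of $\pi^*\calL_Z$ on $Z_{new} := \pi^*(Z) - E_{new}$''. Since $\pi^*(K+Z) = K_{new} + Z_{new}$, this is $H^0(\calO_{Z_{new}}(K_{new}+Z_{new}))_{reg} \neq \emptyset$, which by Serre duality is equivalent to $h^1(\calO_{Z'}) < h^1(\calO_{Z_{new}})$ for every $0 \leq Z' < Z_{new}$. This last statement is a purely combinatorial $\chi$-computation on the blown-up graph (using Theorem~\ref{th:OLD}(a) and the hypothesis $\chi(l) > \chi(Z)$ for $0 \leq l < Z$), and that is what the paper carries out by a case analysis on the components of $Z'$. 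The blow-up is the missing idea: it is what turns the point condition at $p$ into a divisorial (hence cohomological) condition.
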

\begin{proof}
Let us denote the line bundle $\calL_{Z} = \calO_Z(K + Z)$ and assume that it has got a basepoint at the intersection point of exceptional divisors
$E_u$ and $E_v$.

Let us blow up the intersection point of $E_u$ and $E_v$ and let the new exceptional divisor be $E_{new}$, the new resolution graph $\mathcal{T}_{new}$ and the new resolution
$\tX_{new}$ which is certainly generic corresponding to the resolution graph $\mathcal{T}_{new}$.

Let us denote the cycle $Z_{new} = \pi^*(Z) - E_{new}$, with these notations we should prove that $H^0(Z_{new}, \pi^*(\calL_{Z}))_{reg} \neq \emptyset$.

We know that $ \pi^*(\calL_{Z}) = \calO_{Z_{new}}(Z_{new} + K_{new})$, so we have to prove that $H^0(\calO_{Z_{new}}(Z_{new} + K_{new}))_{reg} \neq \emptyset$.

Equivalently, we have to show that for every effective integer cycle $Z' < Z_{new}$ one has $h^1(\calO_{Z'}) < h^1(\calO_{Z_{new}})$ (indeed by Seere duality we have
$h^1(\calO_{Z'}) = h^0(\calO_{Z'}(Z' + K_{new})$ if $Z' \leq Z_{new}$).

Notice that $h^1(\calO_{Z_{new}}) = h^1(\calO_Z) = 1- \chi(Z)$.

Indeed by \cite{NNII} we have $h^1(\calO_Z) = 1 - \min_{E \leq l \leq Z} \chi(l)$, assume that $h^1(\calO_Z) = 1 - \chi(l)$ for an effective cycle $ 0 \leq l < Z$, then we get $h^1(\calO_l) \geq 1- \chi(l)$ but on the other hand $h^1(\calO_l) \leq h^1(\calO_Z)$, which yields $h^1(\calO_l) = h^1(\calO_Z)$.

However this is impossible by the condition $H^0(\calO_{Z}( Z+K))_{reg} \neq \emptyset$, so we get $h^1(\calO_{Z_{new}}) = h^1(\calO_Z) = 1- \chi(Z)$.
Notice that by the arguement above we also get that for each  cycle $0 \leq l < Z$ one has $\chi(l) > \chi(Z)$.

Assume to the contrary that $Z' < Z_{new}$ and we have $h^1(\calO_{Z'}) = h^1(\calO_{Z_{new}})$.

Let the connected components of $|Z'|$ be $|Z'_1|, \cdots , |Z'_n|$, where $Z' = \sum_{1 \leq i \leq n}Z'_i$ and if $Z' \geq E_{new}$, then $Z'_1 \geq E_{new}$. 
  
We know that for every $1 \leq i \leq n$ one has $h^1(\calO_{Z'_i}) = 1 - \min_{E_{|Z'_i|} \leq l \leq Z'_i} \chi(l)$ and assume that $E_{|Z'_i|} \leq l_i \leq Z'_i$, such that 
$h^1(\calO_{Z'_i}) = 1 - \chi(l_i) = h^1(\calO_{l_i})$.

Assume first that $l_1 \ngeq E_{new}$ and let's have the the same cycles $Z_1 = l_1, \cdots, Z_n = l_n$, but on the original resolution $\tX$ and let's denote $Z^* = \sum_{1 \leq i \leq n} Z_i$.

If we have $u, v \notin Z_i, 1 \leq i \leq n$, then we have $h^1(\calO_{Z'}) = h^1(\calO_{Z^*}) < h^1(\calO_Z)$, so in this case we are done.

On the other hand we can assume by symmetry, that $u \in |Z'_1|$, in this case we have $h^1(\calO_{Z_1}) \geq 1 - \chi(Z_1) > 1 - \chi(l_1) = h^1(\calO_{l_1})$ and $h^1(\calO_{Z_i}) 
= h^1(\calO_{l_i})$ for each $2 \leq i \leq n$, so we get again $h^1(\calO_{Z'}) < h^1(\calO_{Z^*}) \leq h^1(\calO_Z)$, which means that the statement follows also in this case.

Finally assume in the following that $l_1 \geq E_{new}$ and $l_i \ngeq E_{new}, E_u, E_v, 2 \leq i \leq n$, we know again that $h^1(\calO_{Z_i}) = h^1(\calO_{l_i})$ for each $2 \leq i \leq n$.

Let us write $l_1 = l + t \cdot E_{new}$, where we know that $l \leq Z$ and $t \leq Z_u + Z_v - 1$.
Let us have the same cycles $Z_2 = l_2, \cdots, Z_n = l_n$, but on the original resolution $\tX$, and $Z_1 = l$ on the resolution $\tX$, and let us denote again $Z^* = \sum_{1 \leq i \leq n} Z_i$.

We know that $h^1(\calO_{Z_i}) \geq h^1(\calO_{l_i}) = 1 - \chi(l_i)$, if $1 \leq i \leq n$.
If $Z^* < Z $, we get that $h^1(\calO_{Z'}) \leq h^1(\calO_{Z^*}) < h^1(\calO_Z)$.

On the other hand if $Z^* = Z$, then $n = 1$ and $t < Z_u + Z_v - 1$, in this case we have $h^1(\calO_{Z'}) = 1 - \chi(l_1) < 1 - \chi(Z) = h^1(\calO_{Z_{new}})$, which proves the satetment completely.

\end{proof}

\begin{lemma}\label{basepointregular}
Assume that $\mathcal{T}$ is an arbitrary rational homology sphere resolution graph and $\tX$ a generic singularity corresponding to it, and let us have a cycle $Z$ on it, such that $|Z| = \calv$, and $H^0(\calO_Z(Z+K))_{reg} \neq \emptyset$.
Assume that $v \in |Z|$ and $Z_v = 1$, then the line bundle $\calL_{Z} = \calO_Z(K + Z)$ hasn't got a basepoint on the regular part of the exceptional divisor $E_v$.
\end{lemma}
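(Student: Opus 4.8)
The plan is to translate the statement ``$p$ is a base point of $\calL_Z=\calO_Z(K+Z)$'' into a cohomological equality by Serre duality on the (Gorenstein) curve $Z$, and then to contradict that equality using Lemma~\ref{injectiveabel}, the injectivity of the Abel map $c^{-E_v^*}(Z)$.

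First I would fix $p\in E_{v,reg}$ and assume, for contradiction, that every global section of $\calL_Z$ vanishes at $p$. Since $Z_v=1$ and $p$ is a regular point of $E_v$, the point $p$ is an effective Cartier divisor on $Z$, so its ideal sheaf $\calO_Z(-p)$ is invertible and one has $0\to\calO_Z(-p)\to\calO_Z\to k(p)\to 0$. Tensoring with the dualizing line bundle $\omega_Z=\calO_{\tX}(K+Z)|_Z=\calL_Z$ and taking cohomology, the vanishing of the evaluation $H^0(Z,\calL_Z)\to k(p)$ forces $h^1(Z,\calL_Z\otimes\calO_Z(-p))=h^1(Z,\calL_Z)+1$. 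Now use Serre duality on $Z$ twice: from $\omega_Z^{\vee}\otimes\omega_Z=\calO_Z$ one gets $h^1(Z,\calL_Z)=h^0(Z,\calO_Z)$, and from $(\omega_Z\otimes\calO_Z(-p))^{\vee}\otimes\omega_Z=\calO_Z(p)$ one gets $h^1(Z,\calL_Z\otimes\calO_Z(-p))=h^0(Z,\calO_Z(p))$. Hence the base-point assumption is equivalent to
\[
h^0(Z,\calO_Z(p))=h^0(Z,\calO_Z)+1.
\]
To reach a contradiction I would note that $\calO_Z(p)$ has Chern class $-E_v^*$ and lies in $\im(c^{-E_v^*}(Z))$, being the class of the effective Cartier divisor $p$. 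Because $Z_v=1$, Lemma~\ref{injectiveabel} gives that $c^{-E_v^*}(Z)$ is injective, so its fibre over $\calO_Z(p)$ is a single point; by the fibre-dimension formula \eqref{eq:dimfiber} this forces $h^0(Z,\calO_Z(p))=h^0(Z,\calO_Z)$, contradicting the displayed equality. Therefore some global section of $\calL_Z$ does not vanish at $p$; intersecting the (open, dense) locus of such sections with $H^0(Z,\calL_Z)_{reg}$, which is open and dense since it is nonempty by hypothesis, produces an $s\in H^0(Z,\calL_Z)_{reg}$ with $p\notin|s|$, so $p$ is not a base point.

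The step that needs the most care is the double use of Serre duality on the possibly non-reduced curve $Z$: one must use that $\omega_Z$ is an honest line bundle (it is the restriction of $\omega_{\tX}(Z)$ from the smooth surface $\tX$), so that $\mathcal{H}om_{\calO_Z}(\calL_Z,\calL_Z)=\calO_Z$, and one must verify that the sheaf $\calO_Z(p)$ produced by duality is precisely the class of the Cartier divisor $p\in\eca^{-E_v^*}(Z)$ to which Lemma~\ref{injectiveabel} applies. An alternative route, parallel to Lemma~\ref{baseI}, is to blow up $\tX$ at $p$, obtaining a generic resolution of a graph $\mathcal{T}_{new}$ with one extra $(-1)$-vertex attached to $v$; since $Z_v=1$ the cycle $Z_{new}=\pi^*(Z)-E_{new}$ is just the strict transform $\tilde Z$, has zero coefficient along $E_{new}$, and $\pi^*\calL_Z|_{Z_{new}}=\calO_{Z_{new}}(K_{new}+Z_{new})$, so it would suffice to check $h^1(\calO_{Z'})<h^1(\calO_{Z_{new}})$ for all $0\le Z'<Z_{new}$; this follows from $\chi_{new}(l)=\chi(l)$ for cycles $l$ with $l_v\le 1$ off $E_{new}$, together with the inequality $\chi(l)>\chi(Z)$ for $0\le l<Z$ already established in the proof of Lemma~\ref{baseI}. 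I expect the Serre-duality argument to be the shortest.
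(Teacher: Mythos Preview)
Your Serre-duality reduction is correct and elegant: the equivalence
\[
p \text{ is a base point of } \omega_Z \ \Longleftrightarrow\ h^0(\calO_Z(p))=h^0(\calO_Z)+1
\]
follows exactly as you say, and the fibre-dimension formula \eqref{eq:dimfiber} then says that the latter is equivalent to the fibre of $c^{-E_v^*}(Z)$ over $\calO_Z(p)$ being one-dimensional. But notice what this equivalence actually establishes: \emph{injectivity of $c^{-E_v^*}(Z)$ is the same statement as ``$\omega_Z$ has no base point on $E_{v,{\rm reg}}$''}. So your argument is, in effect, ``Lemma~\ref{basepointregular} holds because Lemma~\ref{injectiveabel} holds,'' and Lemma~\ref{injectiveabel} is logically equivalent to the conclusion you want. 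Whether this is a proof or a circularity depends entirely on how \cite{NNI} proves Lemma~\ref{injectiveabel}; you should check that its proof there does not pass through base-point-freeness of the dualizing sheaf.

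There is a further warning sign. Your argument uses neither the genericity of $\tX$ nor the hypothesis $H^0(\calO_Z(K+Z))_{reg}\neq\emptyset$, so if it were complete it would prove the statement for every analytic structure and every $Z$ with $Z_v=1$. That cannot be right: for $Z=E$ reduced (a tree of $\bP^1$'s) one has $h^1(\calO_Z)=0$, hence $\pic^{-E_v^*}(Z)$ is a single point and $c^{-E_v^*}(Z)$ is constant, certainly not injective; correspondingly $h^0(\omega_Z)=0$ and every point is trivially a base point. Thus Lemma~\ref{injectiveabel} as quoted cannot hold in the stated generality, and some extra hypothesis (for instance $e_Z(v)\ge 1$, which does follow here from $H^0(\omega_Z)_{reg}\neq\emptyset$) is doing real work. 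The paper's own proof takes a genuinely different route: it assumes a base point of multiplicity $m$, blows up at $p$, compares the spaces $\Omega_D\subset\Omega_{D'}\subset H^0(\omega_Z)$ of differential forms orthogonal to the tangent space of the Abel map, and then computes a Laufer residue integral explicitly to derive a contradiction---using the generic structure through $h^1(\calL_Z)=h^0(\calO_Z)=1$. Your alternative blow-up sketch is closer in spirit to the paper's method, but also needs care: since $Z_v=1$ the cycle $Z_{new}=\pi^*Z-E_{new}$ has $E_{new}$-coefficient zero and does not see the exceptional fibre over $p$, so one cannot directly reduce to a $\chi$-inequality on $Z_{new}$ as in Lemma~\ref{baseI}.
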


\begin{proof}

Assume to the contrary that $p$ is a regular point on the exceptional divisor $E_v$ and it is a base point of the line bundle $\calL_Z$ with multiplicity $m$.

In other words if we pick a generic section $s \in H^0( \calL_Z)_{reg}$ and it's divisor $D$, then the contribution of the point $p$ to the intersection number $(D, E_v)$ is $m$.

Since $Z_v = 1$ we can assume that locally at $p$ the divisor $D$ on the cycle $Z$ is the $m$-th multiple of a smooth divisor on $\tX$, transversal to the exceptional divisor $E_v$.

Notice that since $H^0(\calO_Z(Z+K))_{reg} \neq \emptyset$ by the proof of the Lemma\ref{baseI} we have $h^1(\calO_Z) = h^0(\calL_Z) = 1 - \chi(Z)$, which means that $h^1(\calL_Z) = 1$.

Let us blow up the singularity at $p$, let the new exceptional divisor be $E_{p}$, the new resolution $\tX_p$ and the blow up map $\pi_p$ and let us denote $Z_{p} = \pi_p^*(Z)- E_{p}$. 

Furthermore let the strict transform of $D$ be $D'$, then we have $\pi_p^*(D) = D' + mE_{p}$ and since $p$ is a base point of the line bundle $\calL_Z$ we have $1 = h^1(\calO_Z(D)) < h^1(\calO_{Z_{new}}(D'))$.

Notice that $Z_p$ is the same cycle as $Z$, just on the new resolution $\tX_p$ and $D'$ equals set theoretically $D \setminus m(p)$.

Let's have the subspace $ \Omega_{D} \subset H^1(\calO_Z)^* =  H^0( \calL_Z)$, where $\Omega_{D}$ consisiting of the differential forms vanishing on $\im(T_D(c^{(Z - Z_K)}(Z)))$.

Similarly let $\Omega_{D'} \subset H^1(\calO_Z)^* =  H^0( \calL_Z)$ be the set of differential forms vanishing on  the subspace $\im(T_{D'}(c^{(Z_p - (Z_K)_p + m \cdot E_v^*)}(Z)))$, we 
have $ \Omega_{D} \subset \Omega_{D'}$.

We know that $\dim(\Omega_{D}) = h^1(\calO_Z( D))=1$ and we know that $\Omega_{D}$ is generated by $s \in H^0(Z, \calL_Z)_{reg}$, which means in particular that $ \Omega_{D'} \cap H^0(Z, \calL_Z)_{reg}$ is nonempty and open in $\Omega_{D'}$.

\medskip

We know, that $ h^1(\calO_{Z_{new}}(D')) > h^1(\calO_Z(D))$, which means that $\dim(\Omega_{D'}) > \dim(\Omega_{D})$.

It means there exists a section $s' \in  \Omega_{D'} \cap H^0(Z, \calL_Z)_{reg}$, such that $s' \notin \Omega_{D}$.

We get that the differential form given by $s'$ hasn't got a pole on $D'$, however it has got a pole on $D$, which means that $s'$ must have a pole on the divisor $m(p)$.

It means precisely that there is a tangent vector $\overline{v} \in T_{m(p)}(\eca^{-m E_v^*}(Z))$, such that $T_{m(p)}(s' \circ c^{-m E_v^*}(Z))(\overline{v}) \neq 0$.

We know from this that the differential form corresponding to the section $s'$ has got a pole on the exceptional divisor $E_v$ (indeed it has got already a pole on the divisor $m(p)$
supported at a regular point of $E_v$).

On the other hand we have $Z_v = 1$ and $s' \in H^0(Z, \calL_Z)$, so it must have a pole of order $1$.

We will prove in the following that $T_{m(p)}(s' \circ c^{-m E_v^*}(Z))(\overline{v}) = 0$, which will lead to a contradiction.

The line bundle $\calL_Z$ has got a base point at $p$ with multiplicity $m$, which means that locally $s'$ has got the form for some local coordinates $(x, y)$:
$$\left(\frac{x^k}{y} + \sum_{0 \leq i, j}a_{i, j} x^i y^j \right) dx \wedge dy,$$ 
where $k \geq m$ and we have $E_v = (y = 0)$.

On the other hand since the divisor of $x^m$ on $Z$ is $m(p)$, which means that we can express the tangent vector $\overline{v}$ as the differential of some deformation of the divisor $m(p)$ given by $D_t = x^m + t  \cdot \sum_{0 \leq i \leq m-1} a_i x^i$, so we have $\overline{v} = \frac{d}{dt}D_t$.

By Laufer duality we have the following equality (see \cite{NNI}):

\begin{equation*}
T_{m(p)}(s' \circ c^{-m E_v^*}(Z))(\overline{v})  = \frac{d}{dt}\left(  \int_{|x|=\epsilon, \atop |y|=\epsilon}  \log \left(1+ t \cdot \frac{ \sum_{0 \leq i \leq m-1} a_i x^i }{x^m} \right) \left(\frac{x^k}{y} + \sum_{0 \leq i, j}a_{i, j}x^i y^j \right) dx \wedge dy  \right).
\end{equation*}

However from this formula we get that $T_{m(p)}(s' \circ c^{-m E_v^*}(Z))(\overline{v}) = 0$, since the coefficient of $ t \cdot x^{-1}y^{-1}$ is $0$, this contradiction proves the lemma completely.
\end{proof}

\section{Outline of the proof of Theorem\textbf{A}}

In this section we introduce the main ideas of proving Theorem\textbf{A} without going into technical details.

Assume to the contrary that there exists a line bundle $\calL \in \im(c^{-E_{u'}^* - E_{u''}^*}(Z))$ with $h^0(Z, \calL) > 1$, we can express this also in other words that there is a dominant and  injective function on an open subset $U \subset E_{u'}$, $f: U \to E_{u''}$, such that $\calL = \calO_Z(x + f(x))$.

Also this is equivalent to the condition that the algebraic curves $ \overline{\im(c^{-E_{u'}^*}(Z) )} \subset \pic^{-E_{u'}^*}(Z)$ and $ \overline{\im(c^{-E_{u''}^*}(Z) )} \subset \pic^{-E_{u''}^*}(Z)$ are symmetric to each other.
Notice that these two curves are in distinct Picard groups which are affine spaces, but they can be shifted to the same vector space $H^1(\calO_Z)$, so the property that they are symmetric
makes sence.

From this we can prove easily that  $V_Z(u')= V_Z(u'')$ and in particular $e_Z(u')= e_Z(u'') = e_Z(u', u'')$.

From the condition $e_Z(u')= e_Z(u'') = e_Z(u', u'')$ we also get that the algebraic curves $ \overline{\im(c^{-E_{u'}^*}(Z) )} \subset \pic^{-E_{u'}^*}(Z)$ and $ \overline{\im(c^{-E_{u''}^*}(Z) )} \subset \pic^{-E_{u''}^*}(Z)$ have only one symmetry point, so  $\calL \in \im(c^{-E_{u'}^* - E_{u''}^*}(Z))$ is the only line bundle such that $h^0(Z, \calL) > 1$.

As a next ste we show by analysing the complete linear series of the line bundle $\calO_Z(K + Z)$ that the vertices $u'$ and $u''$ are end vertices of the resolution graph, let us denote their
neighbour vertices by $w'$ and $w''$.

Next we analyse the divisors of sections in $H^0(\calO_Z(K + Z))$ restricted to the exceptional divisors $E_{u'}, E_{u''}$.
Using this and the results about relatively generic line bundles we prove in Lemma\ref{differentialpos} and after it that $Z_{w'} = Z_{w''}$ and $e_Z(u') = Z_{w'} - 1$.

Next in Lemma\ref{cyclesrecursive} we prove that there are cycles $Z^t, 1 \leq t \leq Z_{w'} -1$ with the properties $(Z^t)_{w'}= (Z^t)_{w''} =  Z_{w'} -(t-1)$ and
$H^0(\calO_{Z^t}(K + Z^t))_{reg} \neq \emptyset$ and $Z^t$ is the cohomology cycle of $Z^{t-1} - E_{w'}$ and $Z^{t-1} - E_{w''}$ simultaneously.

We also show that $e_{Z^t}(u') =e_{Z^t}(u'')  = e_{Z^t}(u', u'') = Z_{w'} - t$ and $h^0(\calO_{Z^{t}}( K + Z^{t})) =  h^0(\calO_{Z}(K + Z)) - t + 1$.

Let us look at the cycle $Z' = Z^{Z_{w'} - 3}$, then we have $Z'_{w'} = Z'_{w''} = 4$, $Z'_{u'} =Z'_{u''}= 1$ , $e_{Z'}(u')= e_{Z'}( u'') = e_{Z'}(u', u'')= 3$.

We know that the two curves $ \overline{\im(c^{-E_{u'}^*}(Z') )}$ and $\overline{ \im(c^{-E_{u''}^*}(Z') )}$ are symmetric to each other since they are projections
of the curves $ \overline{\im(c^{-E_{u'}^*}(Z) )}$ and $ \overline{\im(c^{-E_{u''}^*}(Z) )}$.

This means that there is a line bundle $\calL'_s \in \pic^{-E_{u'}^* - E_{u''}^*}(Z')$, such that if $\calL_1$ is a generic line bundle in $ \im(c^{-E_{u'}^*}(Z') )$, then $\calL_2 = \calL'_s - \calL_1$ is a generic line bundle in $ \im(c^{-E_{u''}^*}(Z') )$. 

This is equivalent to $H^0(Z', \calL'_s)_{reg} \neq \emptyset$ and $h^0(Z', \calL'_s) = 2$.

Similarly let us have the cycle $Z'' = Z^{Z_{w'} - 2}$, such that $Z''_{w'} = Z''_{w''} = 3$, $Z''_{u'} = Z''_{u''} = 1$, $e_{Z''}(u')= e_{Z''}( u'') = e_{Z''}(u', u'')= 2$ and
$h^0(\calO_{Z''}(K + Z'')) = h^0(\calO_{Z'}(K + Z')) - 1$.

We know similarly as before that the two curves $ \overline{\im(c^{-E_{u'}^*}(Z'') )}$ and $ \overline{\im(c^{-E_{u''}^*}(Z'') )}$ are also symmetric to each other.

This means that there is a line bundle $\calL_s \in \pic^{-E_{u'}^* - E_{u''}^*}(Z'')$, such that if $\calL_1$ is a generic line bundle in $ \im(c^{-E_{u'}^*}(Z'') )$, then $\calL_2 = \calL_s - \calL_1$ is a generic line bundle in $ \im(c^{-E_{u''}^*}(Z'') )$. 

This is equivalent to $H^0(Z'', \calL_s)_{reg} \neq \emptyset$ and $h^0(Z'', \calL_s) = 2$.

We know that there is a function $g: U' \to E_{u''}$, where $U' \subset E_{u'}$ is an open subset, such that $\calL_s = \calO_{Z''}(p + g(p))$ for $p \in U'$.

We also get easily that the restriction of the line bundle $\calL'_s$ under the map $r' : \pic^{-E_{u'}^*  - E_{u''}^*}(Z') \to \pic^{-E_{u'}^*  - E_{u''}^*}(Z'')$ must be $\calL_s$.

It means that $\calL'_s \in r'^{-1}(\calL_s)$ and notice that $\dim(r'^{-1}(\calL_s)) = 1$ since $h^1(\calO_{Z'}) - h^1(\calO_{Z''}) = 1$. 

We have $\calO_{Z'}(p + g(p)) = \calL'_s$ for  $p \in U'$, which means that the line bundle and Chern class $(- E_{u'} - E_{u''}, \calL_s)$ are not relatively generic on the cycle $Z'$.

On the other hand in Proposition\ref{relgenericcontr} we will prove using the theorems about relatively generic analyitc structures and the genericity of the analytic structures
of $\tX$ and $Z$ that the line bundle and Chern class $(- E_{u'} - E_{u''}, \calL_s)$ are not relatively generic on the cycle $Z'$ (or a statement which is equivalent to that).

This contradiction will prove Theorem\textbf{A} completely.

\section{Hyperelliptic invoultions on generic singularities, case of $2$ vertices}

In the following we prove that there is no linear series with degree $2$ and rank $1$ on a cycle on a generic singularity, more precisely we prove first the following theorem:

\begin{theorem}\label{twovertices}
Suppose that we have an arbitrary rational homology sphere resolution graph $\mathcal{T}$ and a generic resolution $\tX$ corresponding to it, and an effective integer cycle $Z \geq E$, such that $H^0(\calO_Z(K+Z))_{reg} \neq \emptyset$. Let us have two vertices $u', u'' \in \calv$, such that $Z_{u'} = Z_{u''} = 1$ and assume that $e_Z(u') \geq 3$.

With these conditions, for every line bundle $\calL \in \im(c^{-E_{u'}^* - E_{u''}^*}(Z))$ one has $h^0(Z, \calL) = 1$.
\end{theorem}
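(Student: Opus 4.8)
The plan is to argue by contradiction, following the outline sketched in Section~7. Suppose there is a line bundle $\calL \in \im(c^{-E_{u'}^* - E_{u''}^*}(Z))$ with $h^0(Z,\calL) \geq 2$. Since $Z_{u'} = Z_{u''} = 1$, by Lemma~\ref{injectiveabel} the Abel maps $c^{-E_{u'}^*}(Z)$ and $c^{-E_{u''}^*}(Z)$ are injective, so their images are irreducible curves $C' \subset \pic^{-E_{u'}^*}(Z)$ and $C'' \subset \pic^{-E_{u''}^*}(Z)$. The hypothesis $h^0(Z,\calL) \geq 2$ should translate, via the identification $c^{-1}(\calL) = H^0(Z,\calL)_{reg}/H^0(\calO_Z^*)$ and \eqref{eq:dimfiber}, into the statement that there is a one-parameter family of decompositions $\calL = \calO_Z(x) \otimes \calO_Z(y)$ with $x \in \eca^{-E_{u'}^*}(Z)$ and $y \in \eca^{-E_{u''}^*}(Z)$; i.e.\ an injective dominant map $f$ from an open $U \subset E_{u'}$ to $E_{u''}$ with $\calL = \calO_Z(x + f(x))$. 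Translating to the Picard side, this says $C'$ and $\calL - C''$ (both shifted into $H^1(\calO_Z)$) coincide, so $C'$ and $C''$ are ``symmetric'' in $H^1(\calO_Z)$.

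The next step is to extract the consequence $V_Z(u') = V_Z(u'')$, hence $e_Z(u') = e_Z(u'') = e_Z(u',u'')$: a symmetry of the two curves forces their linear spans to agree, and by the description of $e_Z$ in \S\ref{bek:I} and Theorem~\ref{th:DUALVO} these spans are exactly $V_Z(u')$ and $V_Z(u'')$, with $V_Z(u',u'')$ their join. Then, using $e_Z(u') \geq 3$, one argues that the symmetry point is unique, so $\calL$ is the only bundle in $\im(c^{-E_{u'}^* - E_{u''}^*}(Z))$ with extra sections. The heart of the proof is then the inductive reduction: using the base-point analysis of $\calO_Z(K+Z)$ from Lemmas~\ref{baseI} and \ref{basepointregular}, show $u', u''$ are end-vertices with neighbours $w', w''$, establish $Z_{w'} = Z_{w''}$ and $e_Z(u') = Z_{w'} - 1$, and construct the descending chain of cycles $Z^t$ (the cohomology cycles of $Z^{t-1} - E_{w'}$ and $Z^{t-1} - E_{w''}$ simultaneously) with $e_{Z^t}(u') = Z_{w'} - t$ and the symmetry property passing down to each $Z^t$. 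Taking $Z' = Z^{Z_{w'}-3}$ and $Z'' = Z^{Z_{w'}-2}$ one gets $e_{Z'}(u') = 3$, $e_{Z''}(u') = 2$, bundles $\calL'_s \in \pic^{-E_{u'}^*-E_{u''}^*}(Z')$ and $\calL_s \in \pic^{-E_{u'}^*-E_{u''}^*}(Z'')$ each with $h^0 = 2$, and $\calL'_s \in r'^{-1}(\calL_s)$ where $r' : \pic(Z') \to \pic(Z'')$ has one-dimensional fibres.

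The final contradiction comes from relative genericity: the existence of $\calL'_s$ with $\calO_{Z'}(p + g(p)) = \calL'_s$ for $p$ in an open set of $E_{u'}$ says precisely that the pair $(-E_{u'}^* - E_{u''}^*, \calL_s)$ fails to be relatively dominant on $Z'$ in the sense of Theorem~\ref{th:dominantrel}. But $\tX$ and $Z$ are generic, and one can choose the partition of the graph so that $\tX$ is relatively generic in the sense of \S5.1 with respect to it; then Theorems~\ref{relgen1}--\ref{relgen3} force the relevant relative Abel map to be dominant (equivalently a submersion), contradicting the failure of relative dominance just exhibited. This is the step I expect to be the main obstacle: one must carefully check that the numerical inequality governing relative dominance (the $\chi$-inequality of Theorem~\ref{th:dominantrel}(2)) actually holds for $(-E_{u'}^* - E_{u''}^*, \calL_s)$ on $Z'$, using the genericity of $\mfl = \calL_s$ inside $\im(c^{-E_{u'}^* - E_{u''}^*}(Z''))$ together with the relations $h^0(\calO_{Z^t}(K+Z^t)) = h^0(\calO_Z(K+Z)) - t + 1$ and $e_{Z^t}(u') = Z_{w'} - t$; this is where the hypothesis $e_Z(u') \geq 3$ is genuinely used, as it guarantees enough room in the chain $Z^t$ to reach cycles with $e = 2$ and $e = 3$ while keeping $H^0(\calO_{Z^t}(K+Z^t))_{reg} \neq \emptyset$. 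The remaining steps — injectivity, the symmetry dictionary, uniqueness of the symmetry point, and the end-vertex claim — are comparatively routine given the cited results, so I would organize the write-up to dispatch those quickly and concentrate on Lemma~\ref{differentialpos}, Lemma~\ref{cyclesrecursive}, and Proposition~\ref{relgenericcontr}.
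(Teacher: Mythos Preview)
Your architecture matches the paper's exactly: contradiction, symmetry of the image curves, $V_Z(u')=V_Z(u'')$, uniqueness of the symmetry point, the end-vertex lemma, Lemma~\ref{differentialpos}, the chain $Z^t$ of Lemma~\ref{cyclesrecursive}, and the final contradiction via relative dominance on $Z'$ over $Z''$. Where you diverge from the paper is in how you propose to close the last step, and this is a genuine gap.

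You write that one should use ``the genericity of $\mfl=\calL_s$ inside $\im(c^{-E_{u'}^*-E_{u''}^*}(Z''))$'' and then invoke Theorems~\ref{relgen1}--\ref{relgen3}. But $\calL_s$ is \emph{not} generic in that image: it is the single special bundle with $h^0=2$, so those theorems (which concern restricted natural line bundles, and in any case require genericity of the fibre element) do not apply to it directly. The paper gets around this in two stages. First, Lemma~\ref{relabelconst} pins down $\calL_s$ \emph{explicitly} as $\calO_{Z''}(Z''+K-D)$ for a generic $D\in |\calO_{Z''-E_{u'}}(Z''+K)|$, by a dimension count showing the relative Abel map $c^{l'',\calL_r}(Z'')$ is constant. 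Second, Proposition~\ref{relgenericcontr} verifies the $\chi$-inequality of Theorem~\ref{th:dominantrel} directly, case by case on the cycle $A$; in the cases $A\geq E_{u'}$ or $A_{w'}$ large it compares with a \emph{genuinely} generic $\calL_{gen}\in\im(c^{-E_{u'}^*-E_{u''}^*}(Z'-E_{w'}))$ (for which $h^0=1$ and the relative map has image of exact codimension one), and in the intermediate case $1\le A_{w'}\le 2$ it needs the separate Lemma~\ref{propred}, proved by blowing up $E_{w'}$ in many generic points, deforming the contact points of the new divisors, and using semicontinuity plus the genericity of $\tX$ to transfer $h^1$ down by one step in the $E_{w'}$-coefficient. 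None of this is captured by a direct appeal to relative genericity of the singularity with respect to a fixed partition; you should plan on both the explicit identification of $\calL_s$ and the case analysis with the auxiliary blow-up lemma.
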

\begin{proof}

Notice, that since $H^0(\calO_Z(K+Z))_{reg} \neq \emptyset$,  by the previous lemmas we know that $\chi(Z') > \chi(Z)$ for every cycle $0 \leq Z' < Z$.
We also know that $h^0(\calO_Z(K+Z)) = h^1(\calO_Z) = 1 - \chi(Z)$ and $h^0(\calO_Z) = 1$.

Assume in the following to the contrary that there exists a line bundle $\calL \in \im(c^{-E_{u'}^* - E_{u''}^*}(Z))$ with $h^0(Z, \calL) > 1$.

Since we know the conditions $Z_{u'} = Z_{u''} = 1$ about the coeficcients, this means in other words that there is a dominant and  injective function on an open subset $U \subset E_{u'}$, $f: U \to E_{u''}$, such that $\calL = \calO_Z(x + f(x))$ (the injectivity follows from Lemma\ref{injectiveabel}).

Notice that since  $Z_{u'} = Z_{u''} = 1$ we know that $\dim(\eca^{-E_{u'}^*}(Z) ) = \dim(\eca^{-E_{u'}^*}(Z) ) = 1$ and since $h^1(\calO_Z) > h^1(\calO_{Z- E_{u'}}), h^1(\calO_{Z- E_{u''}})$ we get that $\dim(V_Z(u')), \dim(V_Z(u'')) > 0$.

Since $V_Z(u')$ and $V_Z(u'')$ are pararell to the affine clousures of  $\im(c^{-E_{u'}^*}(Z) ), \im(c^{-E_{u''}^*}(Z) )$ we get that 
$$ \dim(\im(c^{-E_{u'}^*}(Z) )) = \dim(\im(c^{-E_{u''}^*}(Z) )) = 1$$.

Let us denote the Zariski clousures of these $1$-dimensional subsets by $ \overline{\im(c^{-E_{u'}^*}(Z) )} \subset \pic^{-E_{u'}^*}(Z)$ and $ \overline{\im(c^{-E_{u''}^*}(Z) )} \subset \pic^{-E_{u''}^*}(Z)$.

We can also express our assumption in the equivalent way that the algebraic curves $ \overline{\im(c^{-E_{u'}^*}(Z) )} \subset \pic^{-E_{u'}^*}(Z)$ and $ \overline{\im(c^{-E_{u''}^*}(Z) )} \subset \pic^{-E_{u''}^*}(Z)$ are symmetric to each other.
Notice that these two curves are in distinct Picard groups which are affine spaces, but they can be shifted to the same vector space $H^1(\calO_Z)$, so the property that they are symmetric
makes sence.

Indeed one direction is trivial and the other direction follows from the facts that the algebraic curves $ \overline{\im(c^{-E_{u'}^*}(Z) )} $ and $ \overline{\im(c^{-E_{u''}^*}(Z) )} $ are irreducible, which means by unicity theorem that if local parts of them are symmetric to each other then they are symmetric.

Notice that the affine clousures $A( \overline{\im(c^{-E_{u'}^*}(Z) )} )$ and $A( \overline{\im(c^{-E_{u''}^*}(Z) )} )$ are pararell to the subspaces $V_{u'}(Z) \subset H^1(\calO_Z)$
and $V_{u''}(Z) \subset H^1(\calO_Z)$ respectively, which means that $V_Z(u')= V_Z(u'')$ and in particular $e_Z(u')= e_Z(u'') = e_Z(u', u'')$.

Notice also that the curves $\overline{\im(c^{-E_{u'}^*}(Z) )}$ and $ \overline{\im(c^{-E_{u''}^*}(Z) )}$ are not affine lines, since $e_Z(u')= e_Z(u'') \geq 3$, so they can have only one symmetry point.

Indeed if they had more than one symmetry point, then we could find a nonzero vector such that  $\overline{\im(c^{-E_{u'}^*}(Z) )}$ is invariant for the translation with that vector, however this is impossible since $\overline{\im(c^{-E_{u'}^*}(Z) )}$ is not an affine line.

This means in particular that $\calL \in \im(c^{-E_{u'}^* - E_{u''}^*}(Z))$ is the unique line bundle, such that $h^0(Z, \calL) > 1$.

Let us have a generic point $x \in U$, we know by  Lemma\ref{baseI} and Lemma\ref{basepointregular} that the line bundle $\calO_Z(K+Z)$ hasn't got a base point on the exceptional divisor $E_{u'}$, so there is a section $s \in H^0(\calO_Z(K+Z))_{reg}$, such that $x \in |s|$, we claim that we should have $f(x) \in |s|$ too.

Indeed it follows from the facts that $h^0(\calO_Z(K+Z - x)) = h^0(\calO_Z(K+Z )) - 1$ and $h^0(\calO_Z(K+Z - x)) = h^0(\calO_Z(K+Z - x - f(x))) $.

Consider the linear series $|H^0(\calO_Z(Z + K))|$ on the cycle $Z$, the line bundle $\calO_Z(Z + K)$ hasn't got a base point on the exceptional divisors $E_{u'}, E_{u''}$, which means that this complete linear series gives a map
 $$\eta: E_{u'} \cup E_{u''} \to \bP(H^0(\calO_Z(Z + K))^*).$$

Notice that if $x \in U$ generic, then we have $\eta(x) = \eta(f(x))$ and similarly if $y \in f(U)$ generic, then $\eta(y) = \eta(f^{-1}(y))$ so it follows that the image of the divisors $E_{u''}$ and $ E_{u'}$ is the same compact $1$-dimensional curve in $\bP(H^0(\calO_Z(Z + K))^*)$.

Let's prove next the following lemma:

\begin{lemma}\label{endvertexs}
The vertices $u'$ and $u''$ are end vertices of the resolution graph $\mathcal{T}$.
\end{lemma}

\begin{proof}

Suppose to the contrary that one of the vertices $u'$ and $u''$ is not an end vertex, we can suppose by symmetry that $u'$ is not an end vertex and assume that $u'$ has got a neighbour vertex $u_n$, which is not in the same connected component of $\mathcal{T} \setminus u'$ as the vertex $u''$. 

Let us look at the intesection point $I = E_{u'} \cap E_{u'_n}$ and it's image $\eta(I) \in  \bP(H^0(Z, Z + K)^*)$, we know that there is a point $I' \in  E_{u''}$, such that $\eta(I) = \eta(I')$.

Let us blow up the resolution at the point $I$ and let's denote the new resolution by $\tX_{new}$ which is generic corresponding to the blown up resolution graph $\mathcal{T}_{new}$,
let us denote the new exceptional divisor by $E_{new}$.

Let us denote the cycle $Z_{new} = \pi^*(Z) - E_{new}$ and consider the line bundle $\calO_{Z_{new} - E_{new}}(Z_{new} - E_{new} + K_{new})$.
We know from $\eta(I) = \eta(I')$ that this line bundle has got a base point at $I'$.

Notice that we have $$h^0(\calO_{Z_{new} - E_{new}}(Z_{new} - E_{new} + K_{new})) = h^0(\calO_{Z_{new}}(Z_{new} + K_{new}))- 1 = h^0(\calO_{Z}(Z+K))-1.$$

Suppose that $0 \leq A \leq Z_{new}- E_{new}$ is the fixed component cycle of the line bundle $\calO_{Z_{new} - E_{new}}(Z_{new} - E_{new} + K_{new})$.
It means that $A$ is the unique cycle such that $h^0(\calO_{Z_{new} - E_{new}}(Z_{new} - E_{new} + K_{new})) = h^0(\calO_{Z_{new} - E_{new}- A}(Z_{new} - E_{new} + K_{new} - A))$ and $H^0(\calO_{Z_{new} - E_{new}- A}(Z_{new} - E_{new} + K_{new} - A))_{reg} \neq \emptyset$.

Notice that $A \ngeq  E_{u''}$, because otherwise we would have $\eta(I) = \eta(p)$ for every $p \in E_{u''}$, which is impossible since $\eta(E_{u''})$ is a $1$-dimensional curve.

There are two cases in the following.

If $I'$ is a regular point of $ E_{u''}$, then we know that the line bundle $ \calO_{Z_{new} - E_{new}- A}(Z_{new} - E_{new} + K_{new} - A)$ has got a base point at $I'$, however this contradicts our previous lemmas.

In the other case assume that $I'$ is the intersection point $ E_{u''} \cap E_{u''_n}$, where $u''_n$ is a neighbour of the vertex $u''$.

We know that $I'$ is not a base point of the line bundle $\calO_{Z_{new} - E_{new}- A}(Z_{new} - E_{new} + K_{new} - A)$, which means that $A \geq E_{u''_n}$.

We claim that $A \ngeq E_{u'}$, indeed notice that $h^0(\calO_{Z- E_{u'}}(Z+ K - E_{u'})) < h^0(\calO_Z(K+Z)) - 1$, because $h^1(\calO_Z) - h^1(\calO_{Z-E_{u'}}) = e_Z(u') \geq 3$.

Let us have the connected component of the subgraph $\calv_{new} \setminus u'$ containing the vertex $u'_n$ and let us denote the part of the cycle $A$ contained in this component by
$A_1$ and write $A_2 = A - A_1$, notice that we have $A_2 \geq E_{u''_n}$.

We have the equality
\begin{equation*}
\left( \chi(Z_{new}- A_1 - E_{new}) - \chi(Z) \right) + \left( \chi(Z_{new}- A_2) - \chi(Z)  \right) = \chi(Z_{new}- A - E_{new}) - \chi(Z).
\end{equation*}

Notice that $H^0(\calO_Z(Z+K)))_{reg} \neq \emptyset$ which means $h^0(\calO_Z(Z+K)) = 1 - \chi(Z)$ and $H^0(\calO_{Z_{new}- A - E_{new}}(Z_{new}- A - E_{new}+K)))_{reg} \neq \emptyset$ which means $h^0(Z_{new}- A - E_{new}) = 1 - \chi(Z_{new}- A - E_{new})$ (since the resolution is generic).

It means that we get the following:

\begin{multline*}
(\chi(Z_{new} - A_1 - E_{new}) - \chi(Z)) + (\chi(Z- A_2) - \chi(Z)) \\ =  h^0(\calO_Z(Z+K)) -  h^0(\calO_{Z_{new}- E_{new}-A}(Z_{new}  - E_{new} - A + K_{new})) = 1.
\end{multline*}

\begin{equation*}
( h^0(\calO_Z( K+Z)) -  h^0(\calO_{Z_{new}- E_{new}-A_1}( Z_{new}  - E_{new} - A_1 + K_{new}))) + ( h^0(\calO_Z(K+Z)) -  h^0(\calO_{Z- A_2}( K+Z - A_2))) \leq 1.
\end{equation*}

Now we know that $h^0(\calO_Z(K+Z)) -  h^0(\calO_{Z_{new}- E_{new}-A_1}(Z_{new}  - E_{new} - A_1 + K_{new})) \geq 1$, so we get that $h^0(\calO_Z( K+Z)) -  h^0(\calO_{Z- A_2}( K+Z - A_2))= 0$.

However this is impossible, since $H^0(\calO_Z(K+Z))_{reg} \neq \emptyset$, this contradiction proves the statement of our lemma.

\end{proof}

It means now that we have proved, that both $u'$ and $u''$ are end vertices, let us denote their neighbour vertices respectively by $w'$ and $w''$.

Let us recall that since the affine clousure of the algebraic curves $ \overline{\im(c^{-E_{u'}^*}(Z) )}$ and $ \overline{\im(c^{-E_{u''}^*}(Z) )}$ are pararell, we have  $e_Z(u') = e_Z(u'') = e_Z(u', u'')$.

This means that $h^1(\calO_{Z- E_{u'}}) = h^1(\calO_{Z- E_{u''}}) = h^1(\calO_{Z- E_{u''}- E_{u'}})$.

Let us prove next the following lemma:

\begin{lemma}\label{differentialpos}
1) Let us have a generic differential form $\omega \in H^0(\calO_Z(Z+K))_{reg}$, and let us denote the restriction of the divisor of $\omega$ to the exceptional divisor $E_{u'}$
by $\sum_{1 \leq i \leq t_{u'}} p_i$ and to the exceptional divisor $E_{u''}$ by $\sum_{1 \leq i \leq t_{u''}} q_i$.

Then we have $t_{u'} = t_{u''}$ and with some reindexing we have $\calO_Z(p_i + q_i) = \calL$ for all $1 \leq i \leq t_{u'}$, in particular $Z_{w'} = Z_{w''}$.

2) If $r_1, \cdots, r_{t_{u'}}$ are generic points on the exceptional divisor $E_{u'}$, then we have $H^0(\calO_Z(Z+K - \sum_{1 \leq i \leq t_{u'}} r_i))_{reg} \neq \emptyset$.

Furthermore the dimension of the image of the map $H^0(\calO_Z(Z+K - \sum_{1 \leq i \leq t_{u'}} r_i)) \to H^0(\calO_{E_{u''}}(Z+K - \sum_{1 \leq i \leq t_{u'}} r_i))$ is $1$.

3) The maps $c^{-t_{u'} \cdot E_{u'}^*}(Z) : \eca^{-t_{u'} \cdot E_{u'}^*}(Z) \to \pic^{-t_{u'} \cdot E_{u'}^*}(Z)$ and $c^{-t_{u''} \cdot E_{u''}^*}(Z) : \eca^{-t_{u''} \cdot E_{u''}^*}(Z) \to \pic^{-t_{u''} \cdot E_{u''}^*}(Z)$ are birational to their image, this  means in particular that $e_Z(u') = e_Z(u'') \geq t_{u'}$. 

\end{lemma}

\begin{proof}

Fort part 1) notice that since the line bundle $\calO_Z(Z+K)$ has got no base points on the exceptional divisors $E_{u'}, E_{u''}$, for a generic section $\omega \in H^0(\calO_Z(Z+K))_{reg}$,
the divisor of $\omega$ consists of $t_{u'}$ disjoint points $p_1, \cdots, p_{t_{u'}}$ on the exceptional divisor $E_{u'}$ and $t_{u''}$ disjoint points $q_1, \cdots, q_{t_{u''}}$ on the exceptional divisor $E_{u''}$.

Let us have an open subset $U' \subset U \subset E_{u'}$, such that the map $f: U' \to E_{u''}$ is a biholomorphism and if $\omega $ is enough generic we can assume that
$p_1, \cdots, p_{t_{u'}} \in U'$ and $q_1, \cdots, q_{t_{u''}} \in f(U')$.

Notice that if $x \in U'$ is an arbitrary point and $s \in H^0(\calO_Z(Z+K))_{reg}$ is a section such that $x \in |s|$, then since $H^0(\calO_Z(Z+K - x)) = H^0(\calO_Z(Z+K -x - f(x)))$
we have $f(x) \in |s|$.

Similarly if $y \in f(U')$ is an arbitrary point and $s \in H^0(\calO_Z(Z+K))_{reg}$ is a section such that $y \in |s|$, then $f^{-1}(y) \in |s|$.

This means that the set $f(p_1), \cdots, f(p_{t_u'})$ equals the set $q_1, \cdots, q_{t_{u''}}$, which indeed means that $t_{u'} = t_{u''}$ and with some reindexing we have
$q_i = f(p_i)$ so $\calO_Z(p_i + q_i) =\calO_Z(p_i + f(p_i))  = \calL$.

Notice that $t_{u'} = (Z+K, E_{u'}) = Z_{w'}-2$ and $t_{u''} = (Z+K, E_{u''}) = Z_{w''}-2$, which indeed proves $Z_{w'} = Z_{w''}$.

Notice also that $\calO_Z(K+Z) = \calO_Z(\sum_{1 \leq i \leq t_{u'}}(p_i + q_i)) = t_{u'} \cdot \calL$.

\medskip

For part 2) assume that $r_1, \cdots, r_{t_{u'}} \in U'$ are generic points on the exceptional divisor $E_{u'}$ and notice that $$\calO_Z(K+Z) =  t_{u'} \cdot \calL = \calO_Z \left(\sum_{1 \leq i \leq t_{u'}}(r_i + f(r_i)) \right).$$

This indeed yields $H^0(\calO_Z (Z+K - \sum_{1 \leq i \leq t_{u'}} r_i ))_{reg} \neq \emptyset$.

On the other hand by part 1) the points $f(r_i)$ are base points of the line bundle $\calO_Z(Z+K - \sum_{1 \leq i \leq t_{u'}} r_i)$ and $t_{u'} = t_{u''}$, so if $s \in H^0(\calO_Z(Z+K - \sum_{1 \leq i \leq t_{u'}} r_i))_{reg}$, then the divisor of $s$ on the exceptional divsior $E_{u''}$ should be $\sum_{1 \leq i \leq t_{u'}}f(r_i)$.

This indeed means, that the dimension of the image of the map $H^0(\calO_Z(Z+K - \sum_{1 \leq i \leq t_{u'}} r_i)) \to H^0(\calO_{E_{u''}}(Z+K - \sum_{1 \leq i \leq t_{u'}} r_i))$ is $1$.

\medskip

For part 3) assume to the contrary that the map $c^{-t_{u'} \cdot E_{u'}^*}(Z) : \eca^{-t_{u'} \cdot E_{u'}^*}(Z) \to \pic^{-t_{u'} \cdot E_{u'}^*}(Z)$ is not birational to its image.

It means that there are generic points $p_1, p_2, \cdots, p_{t_{u'}} \in U'$ and  a different tupple of generic points $p'_1, p_2, \cdots, p'_{t_{u'}} \in U'$, such that $\calO_Z(\sum_{1 \leq i \leq t_{u'}} p_i) = \calO_Z(\sum_{1 \leq i \leq t_{u'}} p'_i)$.
Notice that $\calO_Z(Z + K) = \calO_Z(\sum_{1 \leq i \leq t_{u'}} p'_i + \sum_{1 \leq i \leq t_{u'}} f(p_i ))$, which contradicts part 1) and 2).

Similarly we get that the map $c^{-t_{u''} \cdot E_{u''}^*}(Z) : \eca^{-t_{u''} \cdot E_{u''}^*}(Z) \to \pic^{-t_{u''} \cdot E_{u''}^*}(Z)$ is birational to its image.

\end{proof}

Notice that $h^1(\calO_Z) = 1 - \chi(Z)$ and $h^1(\calO_{Z- E_{u'}}) \geq 1- \chi(Z- E_{u'})$,
which means that $e_Z(u') \leq  \chi(Z- E_{u'}) - \chi(Z) = Z_{w'} - 1$.

On the other hand by part 3) of Lemma\ref{differentialpos} we have $e_Z(u')  \geq t_{u'} = Z_{w'} - 2$, so there are two cases, $e_Z(u') = Z_{w'} - 2$ or $e_Z(u')  = Z_{w'} - 1$.

Assume first that $e_Z(u')  = Z_{w'} - 2$, this means that the map $c^{-t_{u'} \cdot E_{u'}^*}(Z) : \eca^{-t_{u'} \cdot E_{u'}^*}(Z) \to \pic^{-t_{u'} \cdot E_{u'}^*}(Z)$ is dominant.

From part 2) of Lemma\ref{differentialpos} we know that if $p_1, \cdots, p_{t_{u'}}$ are generic points on the exceptional divisor $E_{u'}$, then we have $H^0(\calO_Z( Z+K - \sum_{1 \leq i \leq t_{u'}} p_i))_{reg} \neq \emptyset$.

On the other hand let us have the trivial line bundle $\calL_{u'} =\calO_{Z- E_{u'}}$ and the restriction map $r_{u'} : \pic^{-t_{u'} \cdot E_{u'}^*}(Z) \to \pic^{0}(Z-E_{u'})$.

We know that the line bundle $ \calO_Z(\sum_{1 \leq i \leq t_{u'}} p_i)$ is a relatively generic line bundle in $r_{u'}^{-1}(\calL_{u'})$, since the map $c^{-t_{u'} \cdot E_{u'}^*}(Z) : \eca^{-t_{u'} \cdot E_{u'}^*}(Z) \to \pic^{-t_{u'} \cdot E_{u'}^*}(Z)$ is dominant.

We know from the previous lemma that $H^0(\calO_Z( Z+K - \sum_{1 \leq i \leq t_{u'}} p_i ) )_{reg} \neq \emptyset$ , so  $h^0(\calO_Z( Z+K - \sum_{1 \leq i \leq t_{u'}} p_i ) ) > h^0(\calO_{Z- E_{u'}}(Z- E_{u'} + K - \sum_{1 \leq i \leq t_{u'}} p_i))$, which means by Seere duality that $h^1(\calO_Z( \sum_{1 \leq i \leq t_{u'}} p_i)) > h^1(\calO_{Z- E_{u'}})$.

On the other hand from Theorem\ref{th:dominantrel} we know that $h^1(\calO_Z( \sum_{1 \leq i \leq t_{u'}} p_i)) = h^1(\calO_{Z- E_{u'}}(\sum_{1 \leq i \leq t_{u'}} p_i)) =
h^1(\calO_{Z- E_{u'}})$, which is a contradiction.

It means that we can assume in the following, that $e_Z(u') = Z_{w'} - 1 = t_{u'} + 1$.

We prove the next lemma in the following:

\begin{lemma}\label{cyclesrecursive}
We define recursively the following cycles $Z^t, 1 \leq t \leq Z_{w'} -1$.

Let us have $Z^1 = Z$, and suppose that $Z^1, \cdots, Z^{t-1}$ are already defined with the properties $(Z^i)_{w'}= (Z^i)_{w''} =  Z_{w'} -(i-1)$  and
$H^0(\calO_{Z^i}(K + Z^i))_{reg} \neq \emptyset$ if $1 \leq i \leq t-1$.

Let us denote the cohomology cycle  of the cycle $Z_{t-1} -  E_{w'}$ by $Z^{t}$, so the minimal cycle, such that $h^1(\calO_{Z^{t-1} -  E_{w'}}) = h^1(\calO_{Z^{t}})$, 
or in other words $H^0(\calO_{Z^{t-1} - E_{w'}}( K + Z^{t-1}- E_{w'})) = H^0(\calO_{Z^{t}}( K + Z^{t}))$ and $H^0(\calO_{Z^{t}}(K + Z^{t}))_{reg} \neq \emptyset$.

We claim that $Z^{t}_{w'} = Z^{t}_{w''} = Z^{t-1}_{w'} - 1$, $ h^0(\calO_{Z^{t}}( K + Z^{t})) =  h^0(\calO_{Z^{t-1}}(K + Z^{t-1})) - 1$ and furthermore we have $e_{Z^t}(u') =e_{Z^t}(u'') =  e_{Z^t}(u', u'') = e_{Z^{t-1}}(u') - 1$.

\end{lemma}
\begin{proof}

We prove the statements of the lemma by induction on the parameter $t$, the base case is trivial.

We prove first the equality $ h^0(\calO_{Z^{t}}( K + Z^{t})) =  h^0(\calO_{Z^{t-1}}(K + Z^{t-1})) - 1$.

Notice first that trivially by the induction hypothesis we have $e_{Z^i}( u') \geq e_{Z^{i-1}}( u')- 1$ for $1 \leq i \leq t-1$, so this means that $e_{Z^{t-1}}( u')= e_{Z^{t-1}}(u'') > 1$.

On the other hand notice that the two algebraic curves $ \overline{\im(c^{-E_{u'}^*}(Z^{t-1}) )}$ and $\overline{\im(c^{-E_{u''}^*}(Z^{t-1}))}$ are projections
of the algebraic curves $ \overline{\im(c^{-E_{u'}^*}(Z) )}$ and $\overline{\im(c^{-E_{u''}^*}(Z))}$, which means that they are symmetric to each other and their affine clousures are pararell.

Let us have the canonical map $\eta_{t-1} : Z^{t-1} \to \bP( H^0(\calO_{Z^{t-1}}( K + Z^{t-1}))^*  )$.

Since $e_{Z^{t-1}}(u') = e_{Z^{t-1}}(u'')> 1$ we get exactly the same way as in the case of $Z_1 = Z$ that the image of the curves $E_{u'}$ and $E_{u''}$ are $1$-dimensional and they are the same $1$-dimensional compact projective curves.

Let us denote the intersection point of $E_{u'}$ and $E_{w'}$ by $I$ and let's blow up the resolution at the point $I$.
Let us denote the new resolution by $\tX_{new}$, the new exceptional divisor by $E_{new}$ and let's have the cycle $Z_{new} = \pi^*(Z^{t-1}) - E_{new}$.

We get that there is a point $q \in E_{u''}$, such that every section in $H^0(\calO_{Z_{new} - E_{new}}(Z_{new} - E_{new} + K_{new}))$ vanish at $q$.

Obviously we have $h^0(\calO_{Z_{new} - E_{new}}(Z_{new} - E_{new} + K_{new})) = h^0(\calO_{Z_{new}}(Z_{new} + K_{new})) - 1$.

Let $0 \leq A \leq Z_{new} - E_{new}$ be the fixed component cycle of the line bundle $\calO_{Z_{new} - E_{new}}(Z_{new} - E_{new} + K_{new})$.

It means that $A$ is the unique cycle such that $$H^0(\calO_{Z_{new} - E_{new}}(Z_{new} - E_{new} + K_{new})) = H^0(\calO_{Z_{new} - E_{new} - A}(Z_{new} - E_{new} - A+ K_{new})),$$
 and $H^0(\calO_{Z_{new} - E_{new} - A}(Z_{new} - E_{new} - A+ K_{new}))_{reg} \neq \emptyset$.

Since the image of the curves $E_{u'}, E_{u''}$ are one dimensional we know that $A \ngeq E_{u'}, E_{u''}$, we claim in the following that $A \geq E_{w'}$.

Indeed assume that $A \ngeq E_{w'}$ and let the restriction of the cycle to the vertex set $\calv$ be $A'$.

In this case we would get $H^0(\calO_{Z_{t-1}-A'}(Z_{t-1} - A' + K)) \neq \emptyset$ and $H^0(\calO_{Z_{t-1}-A'}(Z_{t-1} - A' + K)) = H^0(\calO_{Z_{new} - E_{new} - A}(Z_{new} - E_{new} - A+ K_{new}))$, so we would get that the line bundle $\calO_{Z_{t-1}-A'}(Z_{t-1} - A' + K)$ has got a base point at the intersection point $E_{u'} \cap E_{w'} = I$, which is impossible by Lemma\ref{baseI}.

\emph{It means that we got that $A \geq E_{w'}$ and we also claim that $A \geq E_{w''}$ in the following:}

We know that the line bundle $\calO_{Z_{new} - E_{new} - A}(Z_{new} - E_{new}- A + K_{new})$ hasn't got a base point at $q$, so we get that $A \geq E_{u''}$ if $q$ is a regular point on the exceptional divisor $E_{u''}$.
On the other hand this is impossible, since it would mean that the image $\eta_{t-1}(E_{u''})$ is just one point.
It means that $q = E_{u''} \cap E_{w''}$ and we indeed get $A \geq E_{w''}$.

Since $A \geq E_{w'}$, we have $$H^0( \calO_{Z_{new} - E_{new}}( Z_{new} - E_{new} + K_{new}))= H^0( \calO_{Z_{new} - E_{w'}}( Z_{new} - E_{w'} + K_{new})).$$

It yields indeed $h^0(\calO_{Z^{t-1} - E_{w'}}( Z^{t-1}- E_{w'} + K)) = h^0(\calO_{Z^{t-1}}(Z^{t-1} + K)) - 1$, so $ h^0(\calO_{Z^{t}}( K + Z^{t})) =  h^0(\calO_{Z^{t-1}}( Z^{t-1} + K)) - 1$.

The claim $e_{Z^t}(u') = e_{Z^t}(u'')= e_{Z^{t-1}}(u') - 1$ follows from $H^0(\calO_{Z- E_{u'}}(Z- E_{u'} + K)) \subset H^0(\calO_{Z^{t}}(K + Z^{t})) $, which is easy to show by induction.

Indeed for $t= 1$ the statement  is true, since $H^0(\calO_{Z- E_{u'}}(Z- E_{u'} + K)) \subset H^0(\calO_{Z}(K + Z))$.

Assume that $H^0(\calO_{Z- E_{u'}}(Z- E_{u'} + K)) \subset H^0(\calO_{Z^{t-1}}(K + Z^{t-1})) $, notice that in particular this means that $H^0(\calO_{Z- E_{u'}}(Z- E_{u'} + K)) = H^0(\calO_{Z_{t-1}- E_{u'}}(Z_{t-1}- E_{u'} + K))$.

Notice on the other hand that $H^0(\calO_{Z^{t}}(K + Z^{t}))  \subset H^0(\calO_{Z^{t-1}}(K + Z^{t-1}))$ is the subset of sections, which vanish at the intersection point $I = E_{u'} \cap E_{w'}$, which means indeed $H^0(\calO_{Z- E_{u'}}(Z- E_{u'} + K)) \subset H^0(\calO_{Z^{t}}(K + Z^{t})) $.

\underline{In the following we want to prove the equalities $Z^{t}_{w'} = Z^{t}_{w''} = Z^{t-1}_{w'} - 1$:}

\medskip

If $ t \leq Z_{w'} -2 = Z_{w''} - 2$, then notice that $H^0(\calO_{Z^{t}}(K + Z^{t}))_{reg} \neq \emptyset$ and $e_{Z^{t}}(v') = e_{Z^{t}}(v'') > 1$.
Also note that the algebraic curves $ \overline{\im(c^{-E_{u'}^*}(Z^{t}) )}$ and $\overline{\im(c^{-E_{u''}^*}(Z^{t}))}$ are symmetric to each other, so we get that $(Z^t)_{w'} = (Z^t)_{w''}$ in the same way as in the case $t= 1$.

Suppose on the other hand that $t = Z_{w'} -1$, in this case obviously we get $Z^{t}_{w'} \leq 2$.

On the other hand $e_{Z^{t}}(v') \geq 1$, which yields indeed that $Z^{t}_{w'} = 2$.

Indeed if $Z^{t}_{w'} < 2$ would happen, then $\chi(Z^{t} - E_{v'}) \leq \chi(Z^{t})$ and $H^0(\calO_{Z^{t}}(K + Z^{t}))_{reg} \neq \emptyset$ would give $e_{Z^{t}}(v') = 0$, which is impossible.

It means in particular that $Z^{t}_{w'} = Z_{w'} - t + 1$ if $t \leq Z_{w'} -1$, since $(Z_t)_{w'} < (Z_{t-1})_{w'}$ for every $1 < t \leq Z_{w'}-1$ and $(Z_{Z_{w'}-1})_{w'} = 2$.
From this we also get that if $ t \leq Z_{w'} -2 = Z_{w''} - 2$, then $Z^{t}_{w''} = Z^{t}_{w'} = Z_{w'} - t + 1$.

Suppose again that $t = Z_{w'} -1$, we still have to prove that $Z^{t}_{w''} \leq 2$, this will show again that $Z^{t}_{w''} = 2$, because $e_{Z^{t}, v''} \geq 1$ means again that $Z^{t}_{w''} \geq 2$.

We know that $Z^{t-1}_{w''} = 3$, so with the notations  of the proof of the first part the line bundle $\calO_{Z_{new} - E_{new} - A}( Z_{new} - E_{new}- A + K_{new})$ hasn't got a base point at $q$ and $A \ngeq E_{u''}$ so it means that $A \geq E_{w''}$ and $q = E_{u''} \cap E_{w''}$.

It means that $A \geq E_{w''}$, which yields $(Z^t)_{w''} = (Z^{t-1} - A)_{w''} \leq 2$, this proves the statement of the lemma completely.

\end{proof}

Now notice that by our condition $e_{Z}(u'') \geq 3$ we have  $Z_{w'} = Z_{w''} \geq 4$.

Let us look at the cycle $Z' = Z^{Z_{w'} - 3}$, we have by Lemma\ref{cyclesrecursive} that $Z'_{w'} = Z'_{w''} = 4$, $Z'_{u'} =Z'_{u''}= 1$ , $e_{Z'}(u')= e_{Z'}( u'') = e_{Z'}(u', u'')= 3$ and we know that the two curves $ \overline{\im(c^{-E_{u'}^*}(Z') )}$ and $\overline{ \im(c^{-E_{u''}^*}(Z') )}$ are symmetric to each other.

Similarly let us have the cycle $Z'' = Z^{Z_{w'} - 2}$, such that $Z''_{w'} = Z''_{w''} = 3$, $Z''_{u'} = Z''_{u''} = 1$, $e_{Z''}(u')= e_{Z''}( u'') = e_{Z''}(u', u'')= 2$ and
$h^0(\calO_{Z''}(K + Z'')) = h^0(\calO_{Z'}(K + Z')) - 1$ and furthermore $H^0(\calO_{Z''}(K + Z''))_{reg} \neq 0$.

We know that the two curves $ \overline{\im(c^{-E_{u'}^*}(Z'') )}$ and $ \overline{\im(c^{-E_{u''}^*}(Z'') )}$ are projections of the curves $ \overline{\im(c^{-E_{u'}^*}(Z') )}$ and $ \overline{\im(c^{-E_{u''}^*}(Z') )}$.

This means that the two curves are also symmetric to each other, so there is a line bundle $\calL_s \in \pic^{-E_{u'}^* - E_{u''}^*}(Z'')$, such that if $\calL_1$ is a generic line bundle in $ \im(c^{-E_{u'}^*}(Z'') )$, then $\calL_2 = \calL_s - \calL_1$ is a generic line bundle in $ \im(c^{-E_{u''}^*}(Z'') )$. 

This is equivalent to $H^0(Z'', \calL_s)_{reg} \neq \emptyset$ and $h^0(Z'', \calL_s) = 2$, in the following we want to identify the line bundle $\calL_s$.

We claim first that $H^0(\calO_{Z''}( Z'' + K) \otimes \calL_s^{-1})_{reg} \neq \emptyset$.

Indeed for this we have to show that if there is a cycle $0 \leq A < Z''$ and $H^0(\calO_A(K + A))_{reg} \neq \emptyset$, then $h^1(A, \calL_s) < h^1(Z'', \calL_s)$.

We have by Riemann-Roch that $h^1(Z'', \calL_s) = h^1(\calO_{Z''}) - 1$, and $h^1(A, \calL_s) \leq h^1(\calO_A) - 1$ if $A \geq E_{u'} + E_{u''}$ and $h^1(A, \calL_s) = h^1(\calO_A)$ if $A \leq Z'' - E_{u'}$ or $A \leq Z'' - E_{u''}$ (in fact the two conditions are the same by Lemma\ref{cyclesrecursive}.

In the first case we clearly have $h^1(\calO_A) - 1 < h^1(\calO_{Z''}) - 1$ and in the second case we have $h^1(\calO_A) \leq h^1(\calO_{Z'' - E_{u'} - E_{u''}}) = h^1(\calO_{Z''}) - 2$, which proves our claim completely.

Now let us have the cycle $Z''_r = Z'' - E_{u'} - E_{u''}$ and consider the line bundle $\calO_{Z''}( Z'' + K) \otimes \calL_s^{-1}$ on $Z'$.

Let the restriction of this line bundle to the cycle $Z''_r$ be $\calL_r = \calO_{Z''_r}( Z'' + K)$, let us denote the Chern class $l'' = Z'' - Z_K + E_{u'}^* + E_{u''}^*$.

We have the subspace $\eca^{ l'', \calL_r}(Z'')  \subset \eca^{l''}(Z'')$ consisting of divisors $D \in \eca^{l''}(Z'')$, such that $\calL_r = \calO_{Z''_r}(D)$.

We know from \cite{R} that $\eca^{ l'', \calL_r}(Z'')$ is an irreducible smooth algebraic subvariety of $\eca^{l''}(Z'')$ and we have the relative Abel map $c^{ l'', \calL_r}(Z'') : \eca^{ l'', \calL_r}(Z'') \to r^{-1}(\calL_r)$.

We prove the following lemma about the relative Abel map $c^{ l'', \calL_r}(Z'') : \eca^{ l'', \calL_r}(Z'') \to r^{-1}(\calL_r)$.

\begin{lemma}\label{relabelconst}
The relative Abel map $c^{ l'', \calL_r}(Z'')$ is constant and the image is the line bundle $\calO_{Z''}( Z'' + K) \otimes \calL_s^{-1}$.
\end{lemma}

\begin{proof}
We know that $H^0(\calO_{Z''}( Z'' + K) \otimes \calL_s^{-1})_{reg} \neq \emptyset$, which means that the line bundle $\calO_{Z''}( Z'' + K) \otimes \calL_s^{-1}$ is in the image of the map $c^{ l'', \calL_r}(Z'')$ and furthermore obviously we have $$(c^{ l'', \calL_r}(Z''))^{-1}(\calO_{Z''}( Z'' + K) \otimes \calL_s^{-1}) = (c^{l''}(Z''))^{-1}( \calO_{Z''}( Z'' + K - \calL_s) ).$$

On the other hand we know that $h^1(\calO_{Z''}( Z'' + K) \otimes \calL_s^{-1}) = h^0(Z'', \calL_s) = 2$, and we claim that $ h^1(\calO_{Z''_r}(Z'' + K)) = 0$.

Indeed we know that $h^1(\calO_{Z'' - E_{u'}}) = h^1(\calO_{Z''_r})$ and $H^0(\calO_{Z'' - E_{u'}}(Z'' + K))_{reg} \neq \emptyset$, so we have $h^1(\calO_{Z''_r}(Z'' + K)) = h^1(\calO_{Z'' - E_{u'}}(Z'' + K))$.

It means that we only have to prove that $h^1(\calO_{Z'' - E_{u'}}(Z'' + K)) = 0$.

Notice that we have $h^1(\calO_{Z'' - E_{u'}}(Z'' + K)) = h^0(\calO_{Z'' - E_{u'}}(- E_{u'}))$ by Seere duality.

On the other hand we have the following exact sequence:

\begin{equation*}
0 \to H^0(\calO_{Z'' - E_{u'}}(- E_{u'})) \to H^0(\calO_{Z''}) \to H^0(\calO_{E_{u'}}).
\end{equation*}

Since the map $H^0(\calO_{Z''}) \to H^0(\calO_{E_{u'}})$ is surjective and $h^0(\calO_{E_{u'}}) = 0$ we get that $h^0(Z'' - E_{u}, - E_{u}) = h^0(\calO_{Z''}) - 1$ 

It means that $h^1(\calO_{Z''}) = 1 - \chi(Z'')$  since $H^0(\calO_{Z''}(Z'' + K))_{reg} \neq \emptyset$.

We got that $h^1(\calO_{Z''}( Z'' + K) \otimes \calL_s^{-1}) =2$ and $ h^1(\calO_{Z''_r}(Z'' + K)) = 0$.

Assume to the contrary that $c^{ l'', \calL_r}(Z'')$ is nonconstant, this means that $ (c^{l''}(Z''))^{-1}( \calO_{Z''}( Z'' + K) \otimes \calL_s^{-1})$ is a proper smooth algebraic subvariety of $\eca^{ l'', \calL_r}(Z'')$.

Notice that by \cite{R} we have $$\dim(\eca^{ l'', \calL_r}(Z'')) = (l'', Z'')- h^1(\calO_{Z''_r}) + h^1(Z''_r, \calL_r) = (l'', Z'')- h^1(\calO_{Z''_r}).$$

On the other hand we have $$\dim((c^{l''}(Z''))^{-1}( \calO_{Z''}( Z'' + K) \otimes \calL_s^{-1})   ) = h^0(\calO_{Z''}( Z'' + K) \otimes \calL_s^{-1})  - h^0(\calO_{Z''}) = h^0(\calO_{Z''}( Z'' + K) \otimes \calL_s^{-1})  - 1.$$

It means that we have $\dim((c^{l''}(Z''))^{-1}( \calO_{Z''}( Z'' + K) \otimes \calL_s^{-1})   ) = h^1( Z'', \calL_s) - 1 = h^1(\calO_{Z''}) - 2$.

Notice that $ (l'', Z'')- h^1(\calO_{Z''_r})  = (l'', Z'') - h^1(\calO_{Z''}) + 2 = (l'', Z'') + \chi(Z'') + 1$ and $h^1(\calO_{Z''}) - 2 = - \chi(Z'') - 1$ and an easy calculation shows that they
are the same.

This proves that $\dim((c^{l''}(Z''))^{-1}( \calO_{Z''}( Z'' + K) \otimes \calL_s^{-1})   )  = \dim(\eca^{ l'', \calL_r}(Z'') )$, so indeed the map $c^{ l'', \calL_r}(Z'')$ is constant.
\end{proof}

Let us have a generic section $s \in H^0(\calO_{Z'' - E_{u'}}(Z'' + K))_{reg}$ and let's denote $|s| = D$.

Since $D \in \eca^{ l'', \calL_r}(Z'')$ we know from Lemma\ref{relabelconst} that $\calO_{Z''}( Z'' + K) \otimes \calL_s^{-1} = \calO_{Z''}( D)$, so $\calL_s = \calO_{Z''}(Z'' + K - D)$.

We know that the two curves $\overline{\im(c^{-E_{u'}^*}(Z') )}$ and $ \overline{\im(c^{-E_{u''}^*}(Z') )}$ are symmetric to each other, which means that there is a line bundle $\calL'_s \in \pic^{-E_{u'}^* - E_{u''}^*}(Z')$, such that if $\calL_1$ is a generic line bundle in $ \im(c^{-E_{u'}^*}(Z') )$, then $\calL_2 = \calL'_s \otimes \calL_1^{-1}$ is a generic line bundle in $ \im(c^{-E_{u''}^*}(Z') )$. 

This is equivalent to $H^0(Z', \calL'_s)_{reg} \neq \emptyset$ and $h^0(Z', \calL'_s) = 2$.

In another euqivalent way we know that there is a function $g: U' \to E_{u''}$, where $U' \subset E_{u'}$ is an open subset, such that $\calL_s = \calO_{Z''}(p + g(p))$ for $p \in U'$.

We know that the restriction of the line bundle $\calL'_s$ under the map $r' : \pic^{-E_{u'}^*  - E_{u''}^*}(Z') \to \pic^{-E_{u'}^*  - E_{u''}^*}(Z'')$ must be $\calL_s$.

It means that $\calL'_s \in r'^{-1}(\calL_s)$ and notice that $\dim(r'^{-1}(\calL_s)) = 1$ since $h^1(\calO_{Z'}) - h^1(\calO_{Z''}) = 1$. We have $\calO_{Z'}(p + g(p)) = \calL'_s$ for  $p \in U'$, which means that the line bundle and Chern class $(- E_{u'} - E_{u''}, \calL_s)$ are not relatively generic on the cycle $Z'$.

Let us have the cycle $Z' - E_{w'}$, we know that $H^1(\calO_{ Z' - E_{w'}}) = H^1(\calO_{Z''})$, and so there is a unique line bundle in $ \pic^{-E_{u'}^*  - E_{u''}^*}(Z' - E_{w'})$, 
which projects to $\calL_s$, let us denote it by $\calL_{h}$.

The next proposition will give our final contradiction:

\begin{proposition}\label{relgenericcontr}
The line bundle and Chern class $( -E_{u'}^* - E_{u''}^*, \calL_h)$ are relatively generic on the cycle $Z'$.
\end{proposition}

\begin{proof}

By Theorem\ref{th:dominantrel} we have to show that:

\begin{equation*}
\chi(E_{u'}^* + E_{u''}^*) - h^1(\calO_{Z' - E_{w'}}(Z'' + K - D)) < \chi(E_{u'}^* + E_{u''}^* + A ) -  h^1(\calO_{\min( Z'- A, Z' - E_{w'})}( Z'' + K - A - D)),
\end{equation*}
in case we have $0 < A \leq Z'$ and $H^0(\calO_{\min( Z'- A, Z' - E_{w'})}( Z'' + K - A - D))_{reg} \neq \emptyset$.

We know that $h^1(\calO_{Z' - E_{w'}}(Z'' + K - D))  = h^1(\calO_{Z''}(Z'' + K - D)) = h^1(\calO_{Z''}) - 1 = -\chi(Z'')$, so we need to deal with the cohomology numbers $h^1(\calO_{\min( Z'- A, Z' - E_{w'})}(Z'' + K - A - D)) $.

\medskip

\underline{Asumme first that $A \geq E_{u'}$ or $A \geq E_{u''}$:}

\medskip 

In this case we know that $h^1(\calO_{\min( Z'- A, Z' - E_{w'})}(Z'' + K - A - D)) = h^1(\calO_{\min( Z'- A, Z''_r)}( Z'' + K - A - D))$,
since in this case $H^0(\calO_{Z'- A}(Z'- A + K)) \subset H^0(\calO_{Z''_r}(Z''_r+ K))$ and $H^0(\calO_{\min( Z'- A, Z' - E_{w'})}( Z'' + K - A - D))_{reg} \neq \emptyset$.

Since $\calO_{Z''_r}(Z'' + K - A - D) = \calO_{Z''_r}( - A)$  we get that $h^1(\calO_{\min( Z'- A, Z' - E_{w'})}(Z'' + K - A - D)) = h^1(\calO_{\min( Z'- A, Z' - E_{w'}- E_{u'}- E_{u''})}(  - A) )$, so we have to prove that:

\begin{equation*}
\chi(E_{u'}^* + E_{u''}^*) - h^1( \calO_{Z' - E_{w'}}(Z'' + K - D)) <  \chi(E_{u'}^* + E_{u''}^* + A ) - h^1(\calO_{\min( Z'- A, Z' - E_{w'}- E_{u'}- E_{u''})}(  - A )).
\end{equation*}

Let us have a generic line bundle $\calL_{gen} \in \im(c^{-E_{u'}^*  - E_{u''}^*}(Z' - E_{w'}))$ on the image of the Abel map, it means in particular that $H^0(Z' - E_{w'}, \calL_{gen})_{reg} \neq \emptyset$ and also we have $h^0(Z' - E_{w'}, \calL_{gen}) = 1$.

Consider the line bundles in the inverse image $r'^{-1}(\calL_{gen})$, where $r': \pic^{-E_{u'}^* - E_{u''}^*}(Z') \to \pic^{-E_{u'}^* - E_{u''}^*}(Z' - E_{w'})$ is the restriction map. 

We know that the space $r'^{-1}(\calL_{gen})$ is $1$ dimensional and the image of the Abel map $c^{-E_{u'}^* - E_{u''}^*}(Z')$ intersects it in one point.

It means that the image of the relative Abel map $c^{-E_{u'}^* - E_{u''}^*, \calL_{gen}}(Z')$ is $1$-codimensional in the space $r'^{-1}(\calL_{gen})$.

This means in particular that the relative Abel map is not dominant, which means by Theorem\ref{th:dominantrel} that we have:

\begin{equation*}
\chi(E_{u'}^* + E_{u''}^*) - h^1( Z' - E_{w'}, \calL_{gen}) \geq \min_{0 < B \leq Z'} \left( \chi(E_{u'}^* + E_{u''}^* + B ) -  h^1(\calO_{\min( Z'- B, Z' - E_{w'})}(-B) \otimes  \calL_{gen}) \right).
\end{equation*}

On the other hand by part 2) of Theorem\ref{th:hegy2rel} we get that indeed we have equality:

\begin{equation*}
\chi(E_{u'}^* + E_{u''}^*) - h^1( Z' - E_{w'}, \calL_{gen}) = \min_{0 < B \leq Z'} \left( \chi(E_{u'}^* + E_{u''}^* + B ) -  h^1(\calO_{\min( Z'- B, Z' - E_{w'})}(-B) \otimes  \calL_{gen}) \right).
\end{equation*}

Substituting $B = A$ we get:

\begin{equation*}
\chi(E_{u'}^* + E_{u''}^*) - h^1( Z' - E_{w'}, \calL_{gen}) \leq \chi(E_{u'}^* + E_{u''}^* + A ) -   h^1(\calO_{\min( Z'- A, Z' - E_{w'})}(-A) \otimes  \calL_{gen}) .
\end{equation*}

\begin{equation*}
\chi(E_{u'}^* + E_{u''}^*) - h^1( Z' - E_{w'}, \calL_{gen}) \leq \chi(E_{u'}^* + E_{u''}^* + A ) -  h^1(\min( Z'- A,Z' - E_{w'}- E_{u'}- E_{u''}),  - A) .
\end{equation*}

On the other hand we know that $h^1( Z' - E_{w'}, \calL_{gen}) = h^1(\calO_{Z' - E_{w'}}( Z'' + K - D)) - 1$, which proves our claim in this case completely.
\medskip

\underline{Assume in the following that $A \ngeq E_{u'}$ and $A \ngeq E_{u'}$ and $A \ngeq E_{w'}$:}
\medskip

In this case we have $h^1(\calO_{\min( Z'- A, Z' - E_{w'})}( Z'' + K - A - D))  = h^1(\calO_{Z'- A- E_{w'}}(Z'' + K - A - D))$.
Notice that by Seere duality we have $h^1(\calO_{Z'- A- E_{w'}}(Z'' + K - A - D)) = h^0(\calO_{Z'- A- E_{w'}}( Z' -Z''- E_{w'} + D))$.

Notice that $Z' -Z''- E_{w'} + D$ is an effective divisor (although not $0$-dimensional), whose support doesn't intersect the exceptional divisor $E_{u'}$, which means that the map
$ H^0(\calO_{Z'- A- E_{w'}}(Z' -Z''- E_{w'} + D)) \to H^0(\calO_{E_{u'}})$ is surjective.

Let us have the following exact sequence:

\begin{equation*}
0 \to    H^0(\calO_{Z'- A- E_{w'} - E_{u'}}( Z' -Z''- E_{w'} - E_{u'} + D))      \to  H^0(\calO_{Z'- A- E_{w'}}(Z' -Z''- E_{w'} + D)) \to H^0(\calO_{E_{u'}}).
\end{equation*}

It means that we have $ h^0(\calO_{Z'- A- E_{w'}}( Z' -Z''- E_{w'} + D)) =h^0(\calO_{Z'- A- E_{w'} - E_{u'}}( Z' -Z''- E_{w'} - E_{u'} + D)) + 1$.

On the other hand again by Seere duality we have $h^0(\calO_{Z'- A- E_{w'} - E_{u'}}( Z' -Z''- E_{w'} - E_{u'} + D))  = h^1(\calO_{Z'- A- E_{w'} - E_{u'}}(Z'' + K - A - D))$.

It means that we get:

$$h^1(\calO_{\min( Z'- A, Z' - E_{w'})}( Z'' + K - A - D)) = h^1(\calO_{Z'- A- E_{w'} - E_{u'} -  E_{u''}}(Z'' + K - A - D)) + 1.$$ 

Using the above identities we should prove:

\begin{equation*}
\chi(E_{u'}^* + E_{u''}^*)  - h^1(\calO_{Z' - E_{w'}}(Z'' + K - D))  <  \chi(E_{u'}^* + E_{u''}^* + A ) - h^1(\calO_{Z'- A- E_{w'} - E_{u'} -  E_{u''}}(Z'' + K - A - D))  - 1.
\end{equation*}

\begin{equation*}
 - h^1(\calO_{Z' - E_{w'}}(Z'' + K - D))  <  \chi(A) - h^1(\calO_{Z'- A- E_{w'} - E_{u'} -  E_{u''}}( - A)) - 1 .
\end{equation*}

Notice that $ h^1(\calO_{Z' - E_{w'}}(Z'' + K - D)) = h^1(Z' - E_{w'}, \calL_h) =  h^1(Z'', \calL_s) = h^1( \calO_{Z'- E_{w'} - E_{u'} -  E_{u''}})  +1$, so we have to prove that:

\begin{equation*}
  -h^1( \calO_{Z'- E_{w'} - E_{u'} -  E_{u''}})  - 1  <  \chi(A) - h^1(\calO_{Z'- A- E_{w'} - E_{u'} -  E_{u''}}( - A)) - 1 .
\end{equation*}

\begin{equation*}
 h^1( \calO_{Z'- E_{w'} - E_{u'} -  E_{u''}} )   >  h^1(\calO_{Z'- A- E_{w'} - E_{u'} -  E_{u''}}(- A )) - \chi(A),
\end{equation*}

which is trivial since $H^0( \calO_{Z'- E_{w'} - E_{u'} -  E_{u''}})_{reg} \neq \emptyset$.

\medskip

\underline{Assume in the following that $A \ngeq E_{u'}$, $A \ngeq E_{u'}$ and $1 \leq A_{w'} \leq 2$:}

\medskip

This is the most subtle case of the proposition, in this case we have $h^1(\calO_{\min( Z'- A, Z' - E_{w'})}(Z'' + K - A - D))  = h^1(\calO_{Z'- A}(Z'' + K - A - D))$.
By Seere duality we get that $h^1(\calO_{Z'- A}(Z'' + K - A - D)) = h^0(\calO_{Z'- A}(Z' -Z''+ D))$.

Let us prove first the following lemma, where we will heavily use the genericity of the analytic structure of $Z$ and $\tX$:

\begin{lemma}\label{propred}
We have the equality $h^1(\calO_{Z'- A}( Z'' + K - A - D)) = h^1(\calO_{Z'- A - E_{w'}}( Z'' + K - A - D))$.
\end{lemma}

\begin{proof}

Let us blow up the exceptional divisor $E_{w'}$ sequentially $t = (Z'- A)_{w'}-1$ times in $N$ different generic points, where $N$ is a large number.

Let us denote the set of the last end vertices we get by $S_n$ and let us blow up these last vertices $M$ times, where $M$ is a large numbe, let us denote the new vertices we get by $S$.

Let the new singularity be $\tX_{new}$, and let us denote the new exceptional divisors by $E_{i, j}$, where $ 1 \leq i \leq N$ and $1 \leq j \leq t$ and $E_s, s \in S$.

Let us denote the cycles $$Z'_{new} := \pi^*(Z') - \sum_{1 \leq i \leq N, 1 \leq j \leq t} j \cdot E_{i, j} - \sum_{s \in S}(t+1) E_s,$$ 
$$Z''_{new} := \pi^*(Z'') - \sum_{1 \leq i \leq N, 1 \leq j \leq t} j \cdot E_{i, j} - \sum_{s \in S}(t+1) E_s.$$

\medskip

We know that $h^1(\calO_{Z'- A}( Z'' + K - A - D)) = h^1(\calO_{Z'_{new}-  \pi^*(A)}(\pi^*(Z'' + K - A - D)))$.

Let us denote the vertex set $\calv_{new} \setminus S = \calv_m$ and subsingularity of $\tX_{new}$ supported by the vertex set $\calv_m$ by $\tX_m$ and the restriction of the cycle $Z'_{new}-  \pi^*(A)$ by $(Z'_{new}-  \pi^*(A))_m$.

Similarly let us denote the vertex set $\calv_{new} \setminus (S \cup S_n) = \calv_u$ and the corresponding singularity by $\tX_u$ and the restriction of the cycle $Z'_{new}-  \pi^*(A)$ by $(Z'_{new}-  \pi^*(A))_u$.

Let us fix the analytic type of $\tX_m$ and let's change the analytic type of the resolution $\tX$  by moving the contact of the tubular neighborhoods of the exceptional divisors $E_s | s \in S$ with their neighbours.

Notice that the differential forms in $H^1(\calO_{Z'-A})^*$ haven't got a pole on the exceptional divisors $E_s | s \in S$, and have got poles on the exceptional divisors $E_w, w \in S_N$ of order at most $1$, since we have blown up the ecxceptional divisor $t = (Z'- A)_{w'}$ times and at each blow up the order of pole of a differential form decreases by at least $1$ (See \cite{NNII}).

Since $H^0(\calO_{Z'_{new}-  \pi^*(A)}( \pi^*(Z'' + K - A - D)))_{reg} \neq \emptyset$ this means in particular that:

$$ h^1(\calO_{Z'_{new}-  \pi^*(A)}( \pi^*(Z'' + K - A - D))) = h^1(\calO_{(Z'_{new}-  \pi^*(A))_m}(\pi^*(Z'' + K - A - D))). $$

Notice that since $t \geq 1$ and the cohomological cycle of the cycle $Z'' - E_{u'}$ has got coeficcient $1$ on the exceptional divisor $E_{w'}$ we have $$H^0(\calO_{Z'' - E_{u'}}(Z'' - E_{u'} + K)) \subset H^0(\calO_{(Z'_{new})_u}((Z'_{new})_u + K)).$$

\emph{It means that a line bundle on the cycle $\pi^*( Z'' - E_{u'})$ is determined by its restriction to the cycle $\min ((Z'_{new})_u, \pi^*( Z'' - E_{u'}))$.}

On the other hand notice that while changing the glueing of the tubular neighborhoods of the exceptional divisors $E_s | s \in S$ with their neighbours, the line bundle $\calO_{(Z'_{new})_u}(\pi^*(Z'' + K))$ doesn't change, so we can use the same chosen divisor $D$ for each modified analytic sctructure.

Notice that while moving generically the contact points of the exceptional divisors $E_s | s \in S$, the line bundle $\calO_{(Z'_{new}-  \pi^*(A))_m}( \pi^*(Z'' + K - A - D) )$ becomes a generic line bundle in $r_u^{-1}(\calO_{(Z'_{new}-  \pi^*(A))_u}( \pi^*(Z'' + K - A - D) ))$, where $r_u$ is the restriction map $\pic((Z'_{new}-  \pi^*(A))_m) \to \pic((Z'_{new}-  \pi^*(A))_u)$.

Indeed we have $\calO_{(Z'_{new}-  \pi^*(A))_m}( \pi^*(Z'' + K - A - D) ) = \calO_{(Z'_{new}-  \pi^*(A))_m}(  Z''_{new} + K_m - \pi^*(A) -D)$ and notice that the line 
bundle $\calO_{(Z'_{new}-  \pi^*(A))_m}( K_m  -D)$ doesn't change if we change the contact points the exceptional divisors $E_s | s \in S$.

On the other hand the cycle $ Z''_{new} - \pi^*(A)$ has got nonzero coeficcients along the exceptional divisors $E_s | s \in S$ and our claim follows from $e_{(Z'_{new}-  \pi^*(A))_m}(s_n) = \dim(\im(c^{-M E_{s_n}^*}(   (Z'_{new}-  \pi^*(A))_m ))$ if $s_n \in S_n$ and $M$ is enough large.

Notice that $ H^0(\calO_{Z'_{new}-  \pi^*(A)}(\pi^*(Z'' + K - A - D) ))_{reg} \neq \emptyset$ holds for our original analytic structure.

This property is equivalent to $h^0(\calO_{Z'_{new}-  \pi^*(A)}(\pi^*(Z'' + K - A - D) )) > h^0(\calO_{Z'_{new}-  \pi^*(A) - E_I}(\pi^*(Z'' + K - A - D) - E_I ))$ for every subset
$I \subset |Z'_{new}-  \pi^*(A)|$.

\emph{On the other hand the numbers $h^0(\calO_{Z'_{new}-  \pi^*(A) - E_I}(\pi^*(Z'' + K - A - D) - E_I ))$ change semicontinously under a deformation.}

\medskip

However since the original analytic structure was already generic we can assume (by restricting to a suitable open neighborhood of the Laufer deformations space which is a complement of analytic subvarieties) that the cohomology numbers
$h^0(\calO_{Z'_{new}-  \pi^*(A) - E_I}(\pi^*(Z'' + K - A - D) - E_I ))$ stay the same if we move the contact points of the exceptional divisors $E_s | s \in S$.

It means finally that the generic line bundle in $r_u^{-1}(\calO_{(Z'_{new}-  \pi^*(A))_u}( \pi^*(Z'' + K - A - D) ))$ has got no fixed components so the relative Abel map is dominant.

By Theorem\ref{th:dominantrel} we get:

$$h^1(\calO_{Z'_{new}-  \pi^*(A)}( \pi^*(Z'' + K - A - D))) = h^1(\calO_{(Z'_{new}-  \pi^*(A))_u}(\pi^*(Z'' + K - A - D))).$$

On the other hand we know that $H^1(\calO_{(Z'_{new}-  \pi^*(A))_u})^* \subset H^1(\calO_{Z'- A - E_{w'}})^*$.

Indeed, if a differential form $\omega \in H^1(\calO_{Z'_{new}-  \pi^*(A)})^*$ has got a pole on the exceptional divisor $E_{w'}$ of order $(Z'-A)_{w'}$, then since the number $N$ is very large and we blowed up the vertex $E_{w'}$in generic points, at one of them the differential form $\omega'$ hasn't got an arrow (a cut in its vanishing set, which is not contained in the union of exceptional divisors).

Indeed it can be easily seen that the maximal number of arrows is bounded by $\max_{0 \leq l \leq Z'- A} (-Z_K + l, E_{w'})$.
It means that if $N >  \max_{0 \leq l \leq Z'- A} (-Z_K + l, E_{w'})$, then $\omega'$ hasn't got an arrow at one of the points where we blow up the exceptional divisor $E_{w'}$.

This means that there is a vertex $s_n \in S_n$, such that $\omega$ hasn't got a pole along the exceptional divisor $E_{s_n}$, so $\omega \notin  H^1(\calO_{(Z'_{new}-  \pi^*(A))_u})^*$ and this proves indeed that $H^1(\calO_{(Z'_{new}-  \pi^*(A))_u})^* \subset H^1(\calO_{Z'- A - E_{w'}})^*$.

It means that $h^1(\calO_{(Z'_{new}-  \pi^*(A))_u}(\pi^*(Z'' + K - A - D)))  \leq   h^1(\calO_{Z'- A - E_{w'}}(Z'' + K - A - D))$, which means finally that 
$$h^1(\calO_{Z'- A}(Z'' + K - A - D)) = h^1(\calO_{Z'- A - E_{w'}}(Z'' + K - A - D)).$$

\end{proof}

By Seere duality we get that $$h^1(\calO_{Z'- A- E_{w'}}(Z'' + K - A - D)) = h^0(\calO_{Z'- A- E_{w'}}(Z' -Z''- E_{w'} + D)).$$

Notice that $Z' -Z''- E_{w'} + D$ is an effective divisor (although not $0$-dimensional), which doesn't intersect the exceptional divisor $E_{u'}$, which means that the map
$ H^0(\calO_{Z'- A- E_{w'}}(Z' -Z''- E_{w'} + D)) \to H^0(\calO_{E_{u'}})$ is surjective.

Let's have the following exact sequence:

\begin{equation*}
0 \to    H^0(\calO_{Z'- A- E_{w'} - E_{u'}}( Z' -Z''- E_{w'} - E_{u'} + D))          \to H^0(\calO_{Z'- A- E_{w'}}(Z' -Z''- E_{w'} + D)) \to H^0(\calO_{E_{u'}}).
\end{equation*}

Since the map $ H^0(\calO_{Z'- A- E_{w'}}(Z' -Z''- E_{w'} + D)) \to H^0(\calO_{E_{u'}})$ is surjective we get $$ h^0(\calO_{Z'- A- E_{w'}}(Z' -Z''- E_{w'} + D)) =h^0(\calO_{Z'- A- E_{w'} - E_{u'}}( Z' -Z''- E_{w'} - E_{u'} + D)) + 1.$$

On the other hand again by Seere duality we have $h^0(\calO_{Z'- A- E_{w'} - E_{u'}}(Z' -Z''- E_{w'} - E_{u'} + D))  = h^1(\calO_{Z'- A- E_{w'} - E_{u'}}( Z'' + K - A - D))$, so we get
$$h^1(\calO_{\min( Z'- A, Z' - E_{w'})}( Z'' + K - A - D)) = h^1(\calO_{Z'- A- E_{w'} - E_{u'} -  E_{u''}}(Z'' + K - A - D)) + 1.$$

 This means that we should prove:

\begin{equation*}
\chi(E_{u'}^* + E_{u''}^*)  - h^1(\calO_{Z' - E_{w'}}( Z'' + K - D))  <  \chi(E_{u'}^* + E_{u''}^* + A ) - h^1(\calO_{Z'- A- E_{w'} - E_{u'} -  E_{u''}}(Z'' + K - A - D)) - 1 .
\end{equation*}

However the proof of this is then formally step by step the same as in the previous case, so we finished the proof of this case.

\medskip

\underline{Assume finally that $A \ngeq E_{u'}$ and $A \ngeq E_{u'}$ and $A_{w'} \geq 3$:}

\medskip

In this case we have $h^1(\calO_{\min( Z'- A, Z' - E_{w'})}(Z'' + K - A - D))  = h^1(\calO_{Z'- A}(Z'' + K - A - D))$, however $(Z'-A)_{w'} \leq 1$, which means:
$$h^1(\calO_{\min( Z'- A, Z' - E_{w'})}(Z'' + K - A - D))  = h^1(\calO_{Z'- A -  E_{u'} -  E_{u''}}( Z'' + K - A - D)) = h^1(\calO_{Z'- A -  E_{u'} -  E_{u''}}(- A)).$$

It means that we have to show:

\begin{equation*}
\chi(E_{u'}^* + E_{u''}^*) - h^1(\calO_{Z' - E_{w'}}( Z'' + K - D)) <  \chi(E_{u'}^* + E_{u''}^* + A) - h^1(\calO_{Z'- A -  E_{u'} -  E_{u''}}( - A)).
\end{equation*}

Let us have a generic line bundle $\calL_{gen} \in  \im(c^{-E_{u'}^*  - E_{u''}^*}(Z' - E_{w'}))$ on the image of the Abel map,in particular we have $H^0(Z' - E_{w'}, \calL_{gen})_{reg} \neq \emptyset$.

Consider the line bundles in the inverse image $r'^{-1}(\calL_{gen})$, where $r' : \pic^{-E_{u'}^*  - E_{u''}^*}(Z') \to \pic^{-E_{u'}^*  - E_{u''}^*}(Z' - E_{w'})$ is the
restriction map.  

We know that the space $r'^{-1}(\calL_{gen})$ is $1$ dimensional and the image of the Abel map $c^{-E_{u'}^*  - E_{u''}^*}(Z')$ intersects it in one point.

It means that the image of the relative Abel map $c^{-E_{u'}^*  - E_{u''}^*, \calL_{gen}}(Z')$ is $1$-codimensional in it, so we get exactly the same way as in the first case:

\begin{equation*}
\chi(E_{u'}^* + E_{u''}^*) - h^1( Z' - E_{w'}, \calL_{gen}) = \min_{0 < B \leq Z'} \left( \chi(E_{u'}^* + E_{u''}^* + B ) -  h^1(\calO_{\min( Z'- B, Z' - E_{w'})}(-B) \otimes \calL_{gen}) \right).
\end{equation*}

Substituting $B = A$ we get:

\begin{equation*}
\chi(E_{u'}^* + E_{u''}^*) - h^1( Z' - E_{w'}, \calL_{gen}) \leq \chi(E_{u'}^* + E_{u''}^* + A ) -  h^1(\calO_{\min( Z'- A, Z' - E_{w'})}(-A) \otimes \calL_{gen}) .
\end{equation*}

\begin{equation*}
\chi(E_{u'}^* + E_{u''}^*) - h^1( Z' - E_{w'}, \calL_{gen}) \leq \chi(E_{u'}^* + E_{u''}^* + A ) -  h^1(\calO_{Z'- A -  E_{u'} -  E_{u''}}( - A)) .
\end{equation*}

Now we know that $\chi(E_{u'}^* + E_{u''}^*) - h^1( Z' - E_{w'}, \calL_{gen}) = \chi(E_{u'}^* + E_{u''}^*) - h^1(\calO_{Z' - E_{w'}}(Z'' + K - D)) + 1$, which proves our statement also in this case.

\end{proof}

\end{proof}

\section{Hyperelliptic invoultions on generic singularities, case of  $1$ vertex}

In the following we will prove the analouge theorem in the case when we consider one vertex except of two, the proof will be quite similar, although a bit more technical in a few steps:

\begin{theorem}\label{onevertex}
Let us have a rational homology sphere resolution graph $\mathcal{T}$ and a generic resolution $\tX$ corresponding to it, and let us have an effective integer cycle $Z \geq E$, such that 
$H^0(\calO_Z(K+Z))_{reg} \neq \emptyset$ and a vertex $u \in \calv$, such that $Z_{u} = 1$.

Assume furthemore that $e_Z(u) \geq 3$, with these conditions for every line bundle $\calL \in \im(c^{-2E_{u}^*}(Z))$ one has $h^0(Z, \calL) = 1$.
\end{theorem}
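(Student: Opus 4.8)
The plan is to imitate the strategy of Theorem \ref{twovertices}, with the single vertex $u$ playing the role of \emph{both} $u'$ and $u''$, so that the auxiliary injective dominant map $f$ becomes a nontrivial involution on an open subset of $E_u$. Assume to the contrary that there is $\calL\in\im(c^{-2E_u^*}(Z))$ with $h^0(Z,\calL)>1$. Since $Z_u=1$, a divisor in the fibre $c^{-1}(\calL)$ is a pair of distinct points $x+y$ on $E_u$, and the move from $x+y$ to nearby divisors in the same fibre produces a biholomorphism $f$ of an open subset $U\subset E_u$ onto another open subset of $E_u$ with $\calL=\calO_Z(x+f(x))$ and $f(f(x))=x$; in particular $f$ is a nontrivial involution. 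As before this forces $e_Z(u)=\dim V_Z(u)\geq 1$, and since $e_Z(u)\geq 3$ the curve $\overline{\im(c^{-E_u^*}(Z))}\subset\pic^{-E_u^*}(Z)$ is not an affine line, so it has a unique centre of symmetry and $\calL$ is the unique line bundle in $\im(c^{-2E_u^*}(Z))$ with $h^0>1$. Next, using Lemma \ref{baseI} and Lemma \ref{basepointregular}, the canonical bundle $\calO_Z(K+Z)$ has no base point on $E_u$, so it gives a morphism $\eta\colon E_u\to\bP(H^0(\calO_Z(K+Z))^*)$ which, because $\eta(x)=\eta(f(x))$ for generic $x$, factors through the quotient by the involution $f$; in particular $\eta(E_u)$ is a $1$-dimensional curve swept twice. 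Running the argument of Lemma \ref{endvertexs} verbatim (blowing up the intersection point $E_u\cap E_{u_n}$ for a neighbour $u_n$ and tracking where the base point on $E_u$ goes under $\eta$) shows $u$ must be an \emph{end vertex}; let $w$ be its unique neighbour.

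Then I would establish the analogue of Lemma \ref{differentialpos}: for a generic $\omega\in H^0(\calO_Z(K+Z))_{reg}$ with divisor meeting $E_u$ in $t_u=(K+Z,E_u)=Z_w-2$ distinct points $p_1,\dots,p_{t_u}$, the involution $f$ permutes $\{p_1,\dots,p_{t_u}\}$, so $t_u$ is even and the $p_i$ pair up into $f$-orbits; consequently $\calO_Z(K+Z)=\tfrac{t_u}{2}\cdot\calL$ (up to the obvious parity caveat, which I expect to handle by the same base-point analysis showing $t_u$ is forced even here — this is one place that differs from the two-vertex case and needs care). Combined with the estimate $e_Z(u)\le\chi(Z-E_u)-\chi(Z)=Z_w-1$ and a birationality argument for $c^{-t_u E_u^*}(Z)$ giving $e_Z(u)\ge t_u$, I conclude $e_Z(u)\in\{Z_w-2,Z_w-1\}$; the case $e_Z(u)=Z_w-2$ is excluded exactly as in Theorem \ref{twovertices} via Theorem \ref{th:dominantrel} applied to $r_u^{-1}(\calO_{Z-E_u})$. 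So $e_Z(u)=Z_w-1=t_u+1$, and then the recursive construction of Lemma \ref{cyclesrecursive} carries over with $w'=w''=w$: there are cycles $Z^t$ with $(Z^t)_w=Z_w-(t-1)$, $H^0(\calO_{Z^t}(K+Z^t))_{reg}\ne\emptyset$, $e_{Z^t}(u)=Z_w-t$, and $h^0(\calO_{Z^t}(K+Z^t))=h^0(\calO_Z(K+Z))-t+1$, each $Z^t$ being the cohomology cycle of $Z^{t-1}-E_w$.

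Finally, as in the two-vertex case, set $Z'=Z^{Z_w-3}$ (so $Z'_w=4$, $e_{Z'}(u)=3$) and $Z''=Z^{Z_w-2}$ (so $Z''_w=3$, $e_{Z''}(u)=2$). The symmetry of $\overline{\im(c^{-E_u^*}(Z''))}$ gives $\calL_s\in\pic^{-2E_u^*}(Z'')$ with $H^0(Z'',\calL_s)_{reg}\ne\emptyset$ and $h^0(Z'',\calL_s)=2$; one identifies $\calL_s=\calO_{Z''}(Z''+K-D)$ for $D$ the divisor of a generic section of $\calO_{Z''-E_u}(Z''+K)$ via the analogue of Lemma \ref{relabelconst} (the relative Abel map $c^{l'',\calL_r}(Z'')$ with $\calL_r=\calO_{Z''_r}(Z''+K)$, $Z''_r=Z''-2E_u$, being constant by the same dimension count, using $h^1(\calO_{Z''_r}(Z''+K))=0$). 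Lifting, there is $\calL_h\in\pic^{-2E_u^*}(Z'-E_w)$ restricting to $\calL_s$, and the existence of a $1$-parameter family $\calO_{Z'}(p+g(p))=\calL'_s$ over the fibre $r'^{-1}(\calL_s)$ shows $(-2E_u^*,\calL_s)$ is \emph{not} relatively generic on $Z'$. The contradiction is obtained by proving $(-2E_u^*,\calL_h)$ \emph{is} relatively generic on $Z'$, i.e.\ the inequality of Theorem \ref{th:dominantrel}, split into the same cases on $A$ ($A\ge E_u$; $A\not\ge E_u$ and $A\not\ge E_w$; $A\not\ge E_u$ and $1\le A_w\le 2$; $A\not\ge E_u$ and $A_w\ge3$), with the subtle case handled by the blow-up/deformation argument of Lemma \ref{propred} — here one blows up $E_w$ in $N\gg0$ generic points then each resulting end vertex $M\gg0$ times, fixes the analytic type of the ``middle'' part, and uses genericity of $\tX$ and $Z$ (semicontinuity of the relevant $h^0$'s together with relative dominance after moving the contact points) to reduce $h^1(\calO_{Z'-A}(Z''+K-A-D))$ to $h^1(\calO_{Z'-A-E_w}(Z''+K-A-D))$.

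The main obstacle I expect is the same one flagged for the two-vertex proof, concentrated now in the single-vertex bookkeeping: first, the parity issue — since $f$ is an involution of $E_u$ rather than an identification of two distinct curves, I must check that $t_u=Z_w-2$ is always even and that $\calL$ really is $\tfrac{t_u}{2}\calL$-divisible, which requires redoing the base-point analysis of Lemma \ref{differentialpos} with the self-pairing in mind; and second, Lemma \ref{propred}'s deformation argument, where one must verify that after the (more delicate, because $u'=u''=u$) sequence of blow-ups the ``middle'' line bundle becomes generic in the appropriate relative Picard fibre and that the relevant cohomology numbers are constant under the deformation — this is where the genericity hypothesis on $(\tX,Z)$ is genuinely used and where the estimates $N>\max_{0\le l\le Z'-A}(-Z_K+l,E_w)$ on the number of arrows must be reproved in the present setting.
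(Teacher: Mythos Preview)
Your proposal has a genuine structural gap: the claim that $u$ must be an end vertex, obtained by ``running the argument of Lemma \ref{endvertexs} verbatim'', is false, and the rest of your outline depends on it. Lemma \ref{endvertexs} crucially uses that $u'\ne u''$: one picks a neighbour $u'_n$ of $u'$ lying in a component of $\mathcal{T}\setminus u'$ \emph{not} containing $u''$, blows up at $I=E_{u'}\cap E_{u'_n}$, finds the partner point $I'\in E_{u''}$, and then splits the fixed-component cycle $A=A_1+A_2$ across the two sides of $u'$. When $u'=u''=u$ there is no ``other side'' and the splitting argument collapses; the partner point $I'$ lies on $E_u$ itself and could perfectly well be another intersection point $E_u\cap E_{u_{n_j}}$. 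In the paper's proof of Theorem \ref{onevertex} the vertex $u$ is \emph{not} assumed to be an end vertex; it may have several neighbours $u_{n_1},\dots,u_{n_k}$, and the argument is organised around this.

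This also throws off your numerics. In the one-vertex situation the divisor of a generic $\omega$ on $E_u$ has $t_u=(K+Z,E_u)=\sum_j Z_{u_{n_j}}-2$ points paired by the involution, so the birationality statement (the analogue of Lemma \ref{differentialpos}(3)) is for $c^{-\frac{t_u}{2}E_u^*}(Z)$ and gives only $e_Z(u)\ge \tfrac{t_u}{2}$, not $e_Z(u)\ge t_u$. The paper then proves (in its Lemma \ref{cohom}) the exact value $e_{Z'}(u)=\tfrac{(Z'-E_u,E_u)}{2}$ together with the parity constraint $2\mid Z'_{u_{n_j}}$ for every neighbour, by an inductive argument on $(Z'-E_u,E_u)$ that repeatedly passes to cohomology cycles of $Z'-E_{u_{n_j}}$. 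The recursive descent to $Z'$ with $(Z'-E_u,E_u)=6$ then leads to \emph{three} cases according to whether $u$ has one, two, or three neighbours in $|Z'|$ with multiplicities $(6)$, $(4,2)$, or $(2,2,2)$. Only the first case resembles your outline (and there $Z'_{u_{n_1}}=6$, $Z''_{u_{n_1}}=4$, not $4$ and $3$); the other two are handled by a quite different and shorter argument that fixes the analytic data off $E_u$ and moves a contact point $I'$ on $E_u$, deriving a contradiction from the nonconstancy of $\calO_{Z''_{1,2}}(2E_{u_{n_2}})$. So the end-vertex assumption cannot be salvaged, and the bookkeeping (your $Z'_w=4$, $Z''_w=3$, $Z''_r=Z''-2E_u$, $e_Z(u)=Z_w-1$) needs to be replaced by the multi-neighbour framework of Lemma \ref{cohom}.
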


\begin{proof}

Notice that since $H^0(\calO_Z(K+Z))_{reg} \neq \emptyset$, we have $\chi(Z') > \chi(Z)$ for every cycle $0 \leq Z' < Z$, $h^0(\calO_Z(K+Z)) = h^1(\calO_Z) = 1 - \chi(Z)$ and $h^0(\calO_Z) = 1$.

Assume in the following to the contrary that there exists a line bundle $\calL \in \im(c^{-2E_{u}^*}(Z))$ with $h^0(Z, \calL) = 2$.

This means that there is a function on an open subset $U \subset E_{u}$, $f: U \to E_{u}$, such that $\calL = \calO_Z(x + f(x))$.

Notice that the map $f$ is dominant and injective by Lemma\ref{injectiveabel} and we have $f^2 = Id$.

It shows that $f$ is biholomorphic on the open set $U$ by choosing the open subset $U$ properly.

Since the map $c^{-E_u^*}(Z)$ is injective by Lemma\ref{injectiveabel} we also know that the map $f$ is not the identity map.

Note that we can formulate our condition in the equivalent way that the algebraic curve $ \overline{\im(c^{-E_{u}^*}(Z) )} \subset \pic^{-E_{u}^*}(Z)$ has got a symmetry point, which is $\calL / 2$.

Indeed, one direction is trrivial and the other follows from the fact that $\overline{\im(c^{-E_{u}^*}(Z) )}$ is irreducible and there is a local part of it which is symmetric to the point $\calL / 2$,
then by unicity theorem the whole algebraic curve is symmetric to it.

 We know that the curve $\overline{\im(c^{-E_{u}^*}(Z) )}$ is not a line, since $e_Z(u) \geq 3$.
We get similarly as in the proof of the Theorem\ref{twovertices} it can have only one symmetry point, which means that $\calL \in \im(c^{-2E_{u}^*}(Z))$ is the unique line bundle in the image of the Abel map, such that $h^0(Z, \calL) = 2$.

Let us have a generic point $x \in U$, we know that the line bundle $\calO_Z(K+Z)$ hasn't got a base point on the exceptional divisor $E_{u'}$, so there is a section $s \in H^0(\calO_Z(K+Z))_{reg}$, such that $x \in |s|$, we claim that we should have $f(x) \in |s|$ too.

Indeed it follows from the facts that $$h^0(\calO_Z(K+Z - x)) = h^0(\calO_Z(K+Z )) - 1 = h^0(\calO_Z(K+Z - x - f(x))).$$

Consider in the following the complete linear series $|\calO_Z(Z + K)|$ on the cycle $Z$.

We know that the line bundle $\calO_Z(Z + K)$ hasn't got a base point on the exceptional divisor $E_{u}$, so this complete linear series gives a map $\eta: E_{u} \to \bP(H^0(\calO_Z(Z + K))^*)$.

Notice that if $x \in U$ generic, then we have $\eta(x) = \eta(f(x))$.

In the following we claim that for a generic point $x \in U$ the only regular points $y \in U$, such that $\eta (y) = \eta(x)$ are $y = x$ and $y = f(x)$.

Since there are only finitely many points in $E_u \setminus U$ it means that for a generic point $x \in U$ the only points $y \in E_u$, such that $\eta (y) = \eta(x)$ are $y = x$ and $y = f(x)$.

This shows that the map $\eta: E_u \to \eta(E_u)$ is $2$-fold.

Indeed assume that $y \in U$ and $y \neq x, f(x)$ and $\eta(y) = \eta(x)$, this means that $h^0(\calO_Z(K+Z - x)) = h^0(\calO_Z(K+Z - x - y))$, however this means by Seere duality and Riemann-Roch that $ h^0(\calO_Z( x +y)) = 2$.

It means by our remarks above that $\calL = \calO_Z( x +y)$, so $ \calO_Z(y) = \calO_Z(f(x))$, however this is impossible, since the map $c^{-E_u^*}(Z)$ is injective by Lemma\ref{injectiveabel}.

Let us denote the neighbours of the vertex $u$ by $u_{n_1}, u_{n_2}, \cdots, u_{n_k}$, we got that the map $\eta: E_u \to \eta(E_u)$ is a 2-fold branch cover.

Notice that $e_Z(u) = h^1(\calO_Z) - h^1(\calO_{Z- E_u}) \leq (1 - \chi(Z)) - (1 - \chi(Z- E_u)) = (Z-E_u, E_u) -1$.

This means that $(Z-E_u, E_u) = \sum_{1 \leq j \leq k} Z_{u_j} \geq 4$, which means by $(Z_K - E_u, E_u ) = -2$ that $(Z - Z_K, E_u) \geq 2$.

\begin{lemma}\label{diffformscuts}

1) Let us have a generic differential form $\omega \in H^0(\calO_Z(Z+K))_{reg}$, and let its divisor be $\sum_{1 \leq i \leq t_{u}} p_i$ on the exceptional divisor $E_{u}$, we can assume that 
$p_i \in U, 1 \leq i \leq t_{u}$ if $\omega$ is generic enough.

Then we have $2 | t_{u}$ and with some reindexing we have $\calO_Z(p_i + p_{i + \frac{t_u}{2}}) = \calL$ for all $1 \leq i \leq t_{u}$, in particular $p_{i + \frac{t_u}{2}} = f(p_i)$.

2)  Let's have a cycle $E_u \leq Z' \leq Z$, such that $H^0(\calO_{Z'}(K + Z'))_{reg} \neq \emptyset$, then we have $2 | (Z' - Z_K, E_u)$.

3) If $r_1, \cdots, r_{\frac{t_u}{2}}$ are generic points on the exceptional divisor $E_{u}$, then we have $H^0(\calO_Z(Z+K - \sum_{1 \leq i \leq \frac{t_u}{2}} r_i))_{reg} \neq \emptyset$.

Furthermore let us have the restriction map
\medskip

 $$\alpha: H^0 \left(\calO_Z \left(Z+K - \sum_{1 \leq i \leq \frac{t_u}{2}   } r_i \right) \right) \to H^0 \left(\calO_{E_{u}} \left(Z+K - \sum_{1 \leq i \leq \frac{t_u}{2} } r_i \right) \right).$$

\medskip

Then we have $\dim(\im(\alpha)) = 1$.

4) The map $c^{- \frac{t_u}{2} \cdot E_{u}^*}(Z) : \eca^{- \frac{t_u}{2} \cdot E_{u}^*}(Z) \to \pic^{- \frac{t_u}{2} \cdot E_{u}^*}(Z)$ is birational to its image, this means in particular that $e_Z(u)  \geq \frac{t_u}{2}$. 

\end{lemma}

\begin{proof}

Fort part 1) notice first that since the line bundle $\calO_Z(Z+K)$ has got no base points on the exceptional divisor $E_{u}$.
This means that for a generic section $\omega \in H^0(\calO_Z(Z+K))_{reg}$, the divisor of $\omega$ consists of $t_{u}$ disjoint points $p_1, \cdots, p_{t_{u}}$ on the exceptional divisor $E_{u}$.

Notice that we have the open subset $ U \subset E_{u}$, such that the map $f: U \to E_{u}$ is a biholomorphism and $f \circ f = Id$ and if $\omega $ is enough generic we can assume that
$p_1, \cdots, p_{t_{u}} \in U$ .

Notice that if $x \in U$ is an arbitrary point and $s \in H^0(\calO_Z(Z+K))_{reg}$ is a section such that $x \in |s|$, then since $H^0(\calO_Z(Z+K - x)) = H^0(\calO_Z(Z+K -x - f(x)))$
we have $f(x) \in |s|$.

This means that the set $f(p_1), \cdots, f(p_{t_u})$ equals the set $p_1, \cdots, p_{t_{u}}$.

On the other hand we know by Lemma\ref{injectiveabel} that there is at most one point $x \in E_u$ such that $f(x) = x$ which means that we can assume that $f(p_i) \neq p_i$ if
$1 \leq i \leq t_{u}$.

This indeed means that $2 | t_{u}$ and with some reindexing we have $p_{i + \frac{t_u}{2}} = f(p_i)$ and also we have $\calO_Z(p_i + p_{i + \frac{t_u}{2}}) =\calO_Z(p_i + f(p_i))  = \calL$.

\medskip

For part 2) notice that if $E_u \leq Z' \leq Z$, such that $H^0(\calO_{Z'}(K + Z'))_{reg} \neq \emptyset$, then we can repeat the same proof as in part 1)  which gives indeed $2 | (Z' - Z_K, E_u)$.

Indeed we have the same situation as for the cycle $Z$ since $H^0(\calO_{Z'}(K + Z'))_{reg} \neq \emptyset$ and the algebraic curve $ \overline{\im(c^{-E_{u}^*}(Z') )} \subset \pic^{-E_{u}^*}(Z')$ has got a symmetry point, which is the projection of the line bundle $\calL / 2$.

Notice also that $\calO_Z(K+Z) = \calO_Z(\sum_{1 \leq i \leq t_{u}}(p_i + p_{i + \frac{t_u}{2}})) = \frac{t_u}{2} \cdot \calL$.

\medskip 

For part 3) assume that $r_1, \cdots, r_{\frac{t_u}{2}} \in U$ are generic points on the exceptional divisor $E_{u}$ and notice that $\calO_Z(K+Z) = \frac{t_u}{2} \cdot \calL = \calO_Z(\sum_{1 \leq i \leq \frac{t_u}{2}}(r_i + f(r_i)))$, this indeed yields $$H^0(\calO_Z(Z+K - \sum_{1 \leq i \leq \frac{t_u}{2}} r_i))_{reg} \neq \emptyset.$$

On the other hand by part 1) the points $f(r_i)$ are base points of the line bundle $\calO_Z(Z+K - \sum_{1 \leq i \leq \frac{t_u}{2}} r_i)$ so if $s \in H^0(\calO_Z(Z+K - \sum_{1 \leq i \leq \frac{t_u}{2}} r_i))_{reg}$, then the divisor of $s$ on the exceptional divsior $E_{u}$ should be $\sum_{1 \leq i \leq \frac{t_u}{2}}f(r_i)$.

This indeed means that $\dim(\im(\alpha)) = 1$.

\medskip

For part 4) assume that the Abel map $c^{-\frac{t_u}{2} \cdot E_{u}^*}(Z) : \eca^{-\frac{t_u}{2} \cdot E_{u}^*}(Z) \to \pic^{-\frac{t_u}{2} \cdot E_{u}^*}(Z)$ is not birational to its image, then it means that 
there are generic points $p_1, p_2, \cdots, p_{\frac{t_u}{2}} \in U$ and a different tupple of generic points $p'_1, p_2, \cdots, p'_{\frac{t_u}{2}} \in U$, such that $\calO_Z(\sum_{1 \leq i \leq \frac{t_u}{2}} p_i) = \calO_Z(\sum_{1 \leq i \leq \frac{t_u}{2}} p'_i)$.

Notice that $\calO_Z(Z + K) = \calO_Z(\sum_{1 \leq i \leq \frac{t_u}{2}} p'_i + \sum_{1 \leq i \leq \frac{t_u}{2}} f(p_i ))$, which contradicts part 1) and 2).

\end{proof}

Notice that $e_Z(u) \leq (1 - \chi(Z)) - (1 - \chi(Z- E_u)) = (Z- E_u, E_u) -1$, so we have $(Z- E_u, E_u) \geq 4$.

On the other hand by the previous lemma we have $e_Z(u)  \geq \frac{t_{u}}{2} = \frac{(Z- E_u, E_u) - 2}{2}$, we claim that in fact the strict inequality $e_Z(u)  > \frac{t_{u}}{2}$ also holds.

Assume to the contrary that $e_Z(u)  = \frac{t_{u}}{2}$, this means that the Abel map $c^{- \frac{t_{u}}{2} \cdot E_{u}^*}(Z) : \eca^{- \frac{t_{u}}{2} \cdot E_{u}^*}(Z) \to \pic^{- \frac{t_{u}}{2} \cdot E_{u}^*}(Z)$ is dominant.

From part 2) of Lemma\ref{diffformscuts} we know that if $p_1, \cdots, p_{\frac{t_{u}}{2}}$ are generic points on the exceptional divisor $E_{u}$, then we have $H^0(\calO_Z( Z+K - \sum_{1 \leq i \leq \frac{t_{u}}{2}} p_i))_{reg} \neq \emptyset$.

On the other hand let's have the trivial line bundle $\calL_{u} =\calO_{Z- E_{u}}$ and the restriction map $r_{u} : \pic^{-  \frac{t_{u}}{2}\cdot E_{u}^*}(Z) \to \pic^{0}(Z-E_{u})$.

We know that the line bundle $ \calO_Z(\sum_{1 \leq i \leq \frac{t_{u}}{2}} p_i)$ is a relatively generic line bundle in $r_{u}^{-1}(\calL_{u})$, since the Abel map $c^{- \frac{t_{u}}{2} \cdot E_{u}^*}(Z) : \eca^{- \frac{t_{u}}{2} \cdot E_{u}^*}(Z) \to \pic^{- \frac{t_{u}}{2}\cdot E_{u}^*}(Z)$ is dominant.

Since $H^0(\calO_Z( Z+K - \sum_{1 \leq i \leq \frac{t_{u}}{2} } p_i ) )_{reg} \neq \emptyset$ we know that $h^0(\calO_Z( Z+K - \sum_{1 \leq i \leq \frac{t_{u}}{2}} p_i ) ) > h^0(\calO_{Z- E_{u}}(Z- E_{u} + K - \sum_{1 \leq i \leq \frac{t_{u}}{2}} p_i))$, which means that $h^1(\calO_Z( \sum_{1 \leq i \leq \frac{t_{u}}{2} }p_i)) > h^1(\calO_{Z- E_{u}})$.

On the other hand from Theorem\ref{th:dominantrel} we know that $h^1(\calO_Z( \sum_{1 \leq i \leq \frac{t_{u}}{2}} p_i)) = h^1(\calO_{Z- E_{u}}(\sum_{1 \leq i \leq \frac{t_{u}}{2}} p_i)) =
h^1(\calO_{Z- E_{u}})$, which is a contradiction.

It means that we can assume in the following that $e_Z(u) \geq \frac{t_{u}}{2} + 1 = \frac{(Z- E_u, E_u)}{2}$.

We prove the next lemma in the following:

\begin{lemma}\label{cohom}
Suppose, that $\tX$ is a generic singularity with resolution graph $\mathcal{T}$, a vertex $u \in \calv$ with neighbour vertices $u_{n_1}, \cdots, u_{n_k}$ and $Z'$ is an effective integer cycle on it, such that $Z'_u = 1$, $|Z'|$ is connected, $H^0(\calO_{Z'}(Z' + K))_{reg} \neq \emptyset$, and $\sum_{1 \leq j \leq k} Z'_{u_{n_j}} \geq 4$.

Assume furthermore that there is a line bundle $\calL \in \im(c^{-2E_u^*}(Z'))$, such that $h^0(Z', \calL) = 2$.

Let's have a vertex $u_{n_j}$, where $1 \leq j \leq k$, then we have:

1) If the cohomological cycle of the cycle $Z' - E_{u_{n_j}}$ is $Z''$, then $|Z''|$ and $|Z' - Z''|$ is connected and $u \notin |Z' - Z''|$, in particular $u_{n_i} \notin |Z' - Z''|$ if $i \neq j$.

2) We have the equality $e_{Z'}(u) = \frac{(Z'- E_u, E_u)}{2}$.

3) We have the divisibility $2 | Z'_{u_{n_j}}$.

4) For the cohomology cycle of the cycle $Z' - E_{u_{n_j}}$, $Z''$ we have $Z''_{u_{n_j}} = Z'_{u_{n_j}} - 2$ and furthermore we have the equalities $h^1(\calO_{Z''}) = h^1(\calO_{Z'}) - 1$ and $e_{Z''}(u) = e_{Z'}(u) - 1$.
\end{lemma}

\begin{proof}

For part 1) notice first that a cohomological cycle is always connected so $|Z''|$ is trivially connected.

Next let us denote the intersection point of the exceptional divisors $E_u, E_{u_{n_j}}$ by $I$ and let us blow up the resolution at the intersection point $I$.
 Let us denote the new resolution by $\tX_{new}$, which is a generic corresponding to the blown up resolution graph $\mathcal{T}_{new}$.

Let us denote the cycle $Z'_{new} = \pi^*(Z') - E_{new}$ and let us have the line bundle $\calO_{Z'_{new} - E_{new}}(Z'_{new} - E_{new} + K_{new})$ on the cycle $Z'_{new} - E_{new}$.

Notice that we have 
$$h^0(\calO_{Z'_{new} - E_{new}}(Z'_{new} - E_{new} + K_{new})) = h^0(\calO_{Z'_{new}}(Z'_{new} + K_{new}))- 1 = h^0(\calO_{Z'}( Z' + K))-1.$$

Let us denote the fixed component cycle of the line bundle $\calO_{Z'_{new} - E_{new}}(Z'_{new} - E_{new} + K_{new})$ by $0 \leq A \leq Z'_{new}- E_{new}$.

This means that $H^0(\calO_{Z'_{new} - E_{new}}(Z'_{new} - E_{new} + K_{new})) = H^0(\calO_{Z'_{new} - E_{new}- A}(Z'_{new} - E_{new}- A + K_{new} ))$ and $H^0(\calO_{Z'_{new} - E_{new}- A}(Z'_{new} - E_{new}- A + K_{new} ))_{reg} \neq \emptyset$.

We can write $A = \pi^*(A') + t \cdot E_{new}$, notice that $A \ngeq  E_{u}$, because otherwise we would have $\eta(I) = \eta(p)$ for every $p \in E_{u}$, which is impossible since $\eta(E_{u})$ is a $1$-dimensional curve.

We claim that $A \geq E_{u_{n_j}}$, indeed assume to the contrary that $A \ngeq E_{u_{n_j}}$, then $I$ is a base point of the line bundle $\calO_{Z'- A'}(Z' - A' + K)$ and $H^0(\calO_{Z'- A'}(Z' - A' + K))_{reg}$, which is impossible by Lemma\ref{baseI}.

Notice that $A'$ is the minimal cycle, such that $H^0(\calO_{Z'- A'}(K + Z' - A' )) = H^0(Z' -  E_{u_{n_j}}, Z' -  E_{u_{n_j}} + K )$ and $H^0(\calO_{Z'- A'}( Z' - A'  + K))_{reg} \neq \emptyset$, which means that $Z'' = Z'- A'$ is the cohomological cycle of the cycle $Z' - E_{u_{n_j}}$.

\underline{In the following we prove that $|A'|$ is connected, or which is euqivalent $|A|$ is connected:}
\medskip

 Let us denote the component of $A$ in the component of $\calv_{new} \setminus u$ containing $u_{n_j}$ by $A_1$ and $A- A_1 = A_2$, we have:

$$(\chi(Z'_{new}- A_1 - E_{new}) - \chi(Z')) + (\chi(Z'_{new}- A_2) - \chi(Z)) = (\chi(Z'_{new}- A - E_{new}) - \chi(Z')).$$

 It means that we get the following:

\begin{multline*}
(\chi(Z'_{new} - A_1 - E_{new}) - \chi(Z')) + (\chi(Z'- A_2) - \chi(Z'))  = \\  h^0(\calO_{Z'}(Z'+K)) -  h^0(\calO_{Z'_{new}- t E_{new}-A}(Z'_{new}  -  E_{new} - A + K_{new})).
\end{multline*}

\begin{equation*}
(\chi(Z'_{new} - A_1 -  E_{new}) - \chi(Z')) + (\chi(Z'- A_2) - \chi(Z'))  = 1.
\end{equation*}

Notice that we have the inequalities:
$$h^0(\calO_{Z'}( Z' + K)) -  h^0(\calO_{Z'_{new}-   E_{new}-A_1}( Z'_{new}  - E_{new} - A_1 + K_{new})) \leq \chi(Z'_{new} - A_1 - E_{new}) - \chi(Z').$$
$$h^0(\calO_{Z'}(Z' + K)) -  h^0(\calO_{Z'- A_2}( Z' - A_2 + K)) \leq \chi(Z'- A_2) - \chi(Z').$$

\medskip

It means that we get:

\begin{multline*}
( h^0(\calO_{Z'}( Z' + K)) -  h^0(\calO_{Z'_{new}-   E_{new}-A_1}( Z'_{new}  -  E_{new} - A_1 + K_{new}))) + \\ ( h^0(\calO_{Z'}(Z' + K)) -  h^0(\calO_{Z'- A_2}( Z' - A_2 + K))) \leq 1.
\end{multline*}

On the other hand we know that $ h^0(\calO_{Z'}( Z' + K)) -  h^0(\calO_{Z'_{new}-   E_{new}-A_1}( Z'_{new}  - E_{new} - A_1 + K_{new})) \geq 1$, so we get that $h^0(\calO_{Z'}(Z' + K)) -  h^0(\calO_{Z'- A_2}( Z' - A_2 + K)) = 0$.

However this is impossible, since $H^0(\calO_{Z'}( Z' + K))_{reg} \neq \emptyset$, this contradiction proves part 1).

\bigskip

We prove part 2) by induction on the parameter $(Z'- E_u, E_u)$.

\underline{Assume first that $(Z'- E_u, E_u) = 4$:}

\medskip

In this case we know that $e_Z(u) \geq 2$.

Assume that $u_{n_j}$ is a neighbour vertex, such that $Z'_{u_{n_j}} \geq 1$ and assume again that $A'$ is the minimal cycle, such that $H^0(\calO_{Z'- A'}(K + Z' - A' )) = H^0(Z' -  E_{u_{n_j}}, Z' -  E_{u_{n_j}} + K )$ and $H^0(\calO_{Z'- A'}( Z' - A'  + K))_{reg} \neq \emptyset$.

This means that $Z'' = Z'- A'$ is the cohomological cycle of the cycle $Z' - E_{u_{n_j}}$ and by part 1) we know that $Z''_{u_{n_i}} = Z'_{u_{n_i}}$ for every $i \neq j$.

Notice that $H^0(\calO_{Z''}( Z' - A'  + K))_{reg} \neq \emptyset$ and $Z''_u \geq 1$, which means that $2 | (Z'' - E_u, E_u)$, so $(Z'' - E_u, E_u) \leq 2$.

We know that $e_{Z''}(u) \leq (Z'' - E_u, E_u) -1 \leq 1$ and from $ h^0(  \calO_{Z''}( Z'' + K)  ) = h^0(  \calO_{Z'}( Z' + K)    ) - 1$ we know that $e_{Z'}(u) \leq e_{Z''}(u) + 1 \leq 2$, which indeed proves part 2) in this case.

\underline{Assume on the other hand that $(Z'- E_u, E_u) > 4$:}

\medskip

This means by $2 | (Z'- E_u, E_u)$ that $(Z'- E_u, E_u) \geq 6$.

Assume that $u_{n_j}$ is a neighbour vertex, such that $Z'_{u_{n_j}} \geq 1$ and assume again that $A'$ is the minimal cycle, such that $H^0(\calO_{Z'- A'}(K + Z' - A' )) = H^0(Z' -  E_{u_{n_j}}, Z' -  E_{u_{n_j}} + K )$ and $H^0(\calO_{Z'- A'}( Z' - A'  + K))_{reg} \neq \emptyset$.
This means that $Z'' = Z'- A'$ is the cohomological cycle of the cycle $Z' - E_{u_{n_j}}$, by part 1) we know that $Z''_{u_{n_i}} = Z'_{u_{n_i}}$ for every $i \neq j$.

Notice that $H^0(\calO_{Z''}( Z''  + K))_{reg} \neq \emptyset$ and $(Z'')_u \geq 1$, which means that $2 | (Z'' - E_u, E_u)$, and we get $(Z'' - E_u, E_u) \leq (Z' - E_u, E_u) - 2$.

On the other hand from $ h^0( \calO_{Z''}( Z'' + K)  ) = h^0( \calO_{Z'}( Z' + K)    ) - 1$ we know that $e_{Z'}(u) \leq e_{Z''}(u) + 1$.

By the induction hypothesis we know that $e_{Z''}(u) \leq \frac{(Z''- E_u, E_u)}{2}$, so we get $e_{Z'}(u) \leq \frac{(Z'- E_u, E_u)}{2}$ and we already know that $e_{Z'}(u) \geq \frac{(Z'- E_u, E_u)}{2}$, which proves part 2) completely.

\bigskip

We prove part 3) again by induction on the parameter $(Z'- E_u, E_u)$.

Assume first that $(Z'- E_u, E_u) = 4$ and assume to the contrary that there exists a vertex $u_{n_j}$, such that $Z_{u_{n_j}}$ is odd:

It follows that there exists a vertex $u_{n_j}$, such that $Z_{u_{n_j} } = 1$.

Le $Z''$ be the cohomological cycle of the cycle $Z' - E_{u_{n_j}}$, from part 1) we know that $Z''_{u_{n_i}} = Z'_{u_{n_i}}$ for every $i \neq j$.
Notice that $H^0(\calO_{Z''}( Z''  + K))_{reg} \neq \emptyset$ and $(Z'')_u \geq 1$, but $(Z''- E_u, E_u) = (Z'- E_u, E_u) - 1$ is odd, which is impossible.

Assume in the following that $(Z'- E_u, E_u) > 4$ and assume to the contrary that there exists a vertex $u_{n_j}$, such that $Z_{u_{n_j}}$ is odd:

Let $Z'' $ be the cohomological cycle of the cycle $Z' - E_{u_{n_j}}$, we know again that $Z''_{u_{n_i}} = Z'_{u_{n_i}}$ for every $i \neq j$.

Notice that $2 | (Z'' - E_u, E_u)$, so $(Z'' - E_u, E_u) \leq (Z' - E_u, E_u) - 2$ and we have $e_{Z'}(u) \leq e_{Z''}(u) + 1$.
By part 2) we know that $e_{Z''}(u) = \frac{(Z'' - E_u, E_u)}{2}$ and $e_{Z'}(u) = \frac{(Z' - E_u, E_u)}{2}$, which gives $(Z'' - E_u, E_u) = (Z' - E_u, E_u) - 2$  and $(Z'')_{u_{n_j}} = (Z')_{u_{n_j}} - 2$.

Notice that we get $(Z''- E_u, E_u) \geq 4$, so by the induction hypothesis we get that $(Z'')_{u_{n_j}}$ is even, which indeed shows that $(Z')_{u_{n_j}}$ is even too, and it finishes the proof of part 3) completely.

\bigskip

The first statement of part 4) is already proved in part 3).
We also know that $h^1(\calO_{Z''}) = h^0(\calO_{Z''}(Z'' + K)) = h^0(\calO_{Z'}(Z' + K))-1 =  h^1(\calO_{Z'}) - 1$ from part 1).

The claim $e_{Z''}(u) = e_{Z'}(u) - 1$ follows trivially in the case $(Z'- E_u, E_u) > 4$ from part 2) and from the first statement of part 4). 

On the other hand assume that $(Z'- E_u, E_u) =4$, in this case we have $e_{Z'}(u) = 2$ by part 2) and we have to show that $e_{Z''}(u) = 1$.

Indeed we have $e_{Z''}(u) \geq e_{Z'}(u) - 1 = 1$ and $e_{Z''}(u) \leq (Z'' - E_u, E_u) - 1 = 1$ which proves indeed $e_{Z''}(u) = 1$ and this finishes the proof of part 4) completely.

\end{proof}

Let us return to the proof of our main theorem in the following.

Let us recall that we have an effective integer cycle $Z \geq E$ on a generic resolution $\tX$, such that  $H^0(\calO_Z(K+Z))_{reg} \neq \emptyset$ and we have furthermore $Z_{u} = 1$, $e_Z(u) \geq 3$ and there is a line bundle $\calL \in \im(c^{-2 E_u^*}(Z))$, such that $h^0(Z, \calL) = 2$.

By the Lemma\ref{cohom} we have $t_u = (Z- E_u, E_u) \geq 6$.

\emph{Let us define $\frac{t_u}{2} - 1$ number of connected cycles $Z_1 = Z, \cdots, Z_{\frac{t_u}{2} - 1}$ in the following recursively:}

Assume that $Z_{i-1}$ is already defined and let's have a vertex $u_{n_j} \in |Z_{i-1}|$ and let $Z_i$ be the chomological cycle of the cycle $Z_{i-1} - E_{u_{n_j}}$.

By  Lemma\ref{cohom} we know that $(Z_i)_{u_{n_j}} = (Z_{i-1})_{u_{n_j}} - 2$ and $(Z_i)_{u_{n_l}} = (Z_{i-1})_{u_{n_l}}$ for $l \neq j$, so in particular $(Z_i - E_u, E_u) = (Z_{i-1} - E_u, E_u) - 2$ and we have also $e_{Z_i}(u) = e_{Z_{i-1}}(u) - 1$.

Consider the connected cycle $Z' = Z^{\frac{t_u}{2} - 2}$, we have by Lemma\ref{cohom} $(Z' - E_u, E_u) = 6$, $Z'_{u} = 1$ , $e_{Z'}(u) = 3$, $2 | (Z')_{u_{n_j}}$ for all $0 \leq j \leq k$.

We also know that there is a line bundle $\calL' \in \im(c^{-2 E_u^*}(Z'))$, such that $h^0(Z', \calL') = 2$, indeed $\calL'$ is the restriction of the line bundle $\calL$ under the map
$\pic^{-2E_u^*}(Z) \to \im(c^{-2 E_u^*}(Z'))$.

There are three cases in the following:

\medskip

In the first case $u$ has got one neighbour in $|Z'|$, and by symmetry we can assume that this is $u_{n_1}$ and $(Z')_{u_{n_1}} = 6$.

It means that  in this case $Z' = Z^{\frac{t_u}{2} - 2}$ and let us have the vertex $u_{n_1} \in |Z'|$ and let $Z'' = Z_{\frac{t_u}{2} - 1}$ be the cohomological cycle of the cycle $Z' - E_{u_{n_1}}$.
By Lemma\ref{cohom} we have $Z''_{u_{n_1}} = 4$.

\medskip

In the second case $u$ has got $2$ neighbours in $|Z'|$, and by symmetry we can assume that these are $u_{n_1}, u_{n_2}$ and $(Z')_{u_{n_1}} = 4$ and $(Z')_{u_{n_2}} = 2$.
It means that in this case $Z' = Z^{\frac{t_u}{2} - 2}$ and let us have the vertex $u_{n_2} \in |Z'|$ and let $Z'' = Z_{\frac{t_u}{2} - 1}$ be the cohomological cycle of the cycle $Z' - E_{u_{n_2}}$.
By Lemma\ref{cohom} we have $Z''_{u_{n_1}} = 4$ and $Z''_{u_{n_2}} = 0$.

\medskip

In the third case $u$ has got $3$ neighbours in $|Z'|$, and by symmetry we can assume that these are $u_{n_1}, u_{n_2}, u_{n_3}$ and $(Z')_{u_{n_3}} = (Z')_{u_{n_2}} = (Z')_{u_{n_1}} = 2$.
It means that in this case $Z' = Z^{\frac{t_u}{2} - 2}$ and let us have the vertex $u_{n_3} \in |Z'|$ and let $Z'' = Z_{\frac{t_u}{2} - 1}$ be the cohomological cycle of the cycle $Z' - E_{u_{n_3}}$.
By Lemma\ref{cohom} we have $Z''_{u_{n_1}} = 2$, $Z''_{u_{n_2}} = 2$, $Z''_{u_{n_3}} = 0$.

\medskip

In both cases we have $Z''_{u} = 1$, $e_{Z''}(u)= 2$ and $h^0(\calO_{Z''}(K + Z'')) = h^0(\calO_{Z'}(K + Z')) - 1$ and furthermore $H^0(\calO_{Z''}(K + Z''))_{reg} \neq \emptyset$.

\emph{We claim first that in any case if the cohomological cycle of the cycle $Z'' - E_{u}$ is $C$, then $C_{u_{n_1}} \leq 1$:}

Let us denote the intersection point of the exceptional divisors $E_u, E_{u_{n_1}}$ by $I$, and consider the subspace $V \subset H^0(\calO_{Z''}(Z'' + K))$ consisting of the sections, which vanish at the intersection point $I$.
We know that $H^0(\calO_{Z'' - E_{u}}(Z'' - E_{u}+ K)) \subset V$ and $V = H^0(\calO_{Z'''}(Z'''+ K))$, where $Z'''$ is the cohomology cycle of $Z'' - E_{u_{n_1}}$.

We know that $(Z''')_{u} = 1$, $(Z''')_{u_{n_1}} = 2$ and $e_{Z'''}(u) = 1$ in the first and second case and $(Z''')_{u} = 1$, $(Z''')_{u_{n_1}} = 0$ and $e_{Z'''}(u) = 1$  in the third case by Lemma\ref{cohom}.

We get that $H^0(\calO_{Z'' - E_{u}}(Z'' - E_{u}+ K)) = H^0(\calO_{Z''' - E_u}(Z'''- E_u+ K))$, so in the third case we immediately get that $C_{u_{n_1}} \leq 1$.

In the first and second case let us have the subspace $V' \subset H^0(\calO_{Z'''}(Z''' + K))$ consisting of the sections, which vanish at the intersection point $I$.

We know that $H^0(\calO_{Z''' - E_{u}}(Z''' - E_{u}+ K)) \subset V'$ and $V' = H^0(\calO_{Z''''}(Z''''+ K))$, where $Z''''$ is the cohomology cycle of $Z''' - E_{u_{n_1}}$.

Since $H^0(\calO_{C}(C+ K)) = H^0(\calO_{Z''''}(Z''''+ K))$, it follows indeed that $C_{u_{n_1}} \leq 1$ in these cases too.

\medskip

Since the curve $ \overline{\im(c^{-E_{u}^*}(Z'') )}$ is the projection of the curve $ \overline{\im(c^{-E_{u}^*}(Z') )}$ we know that there is a line bundle $\calL_s \in \pic^{-2E_{u}^* }(Z'')$, such that if $\calL_1$ is a generic line bundle in $ \im(c^{-E_{u}^*}(Z'') )$, then $\calL_2 = \calL_s  \otimes \calL_1^{-1}$ is a generic line bundle in $ \im(c^{-E_{u}^*}(Z'') )$.

Indeed $\calL_s$ is the restriction of the line bundle $\calL'$ under the map $ \pic^{-2E_{u}^* }(Z') \to \pic^{-2E_{u}^* }(Z'')$ and in other words $H^0(Z'', \calL_s)_{reg} \neq \emptyset$ and $h^0(Z'', \calL_s) = 2$.

In the following similarly as in the proof of Theorem\ref{twovertices} we want to identify the line bundle $\calL_s$.

\medskip

\underline{We claim first that $H^0(\calO_{Z''}( Z'' + K) \otimes \calL_s^{-1})_{reg} \neq \emptyset$:}

\medskip

For this we have to show that if there is a cycle $0 \leq A < Z''$ and $H^0(\calO_A(K + A))_{reg} \neq \emptyset$, then $h^1(A, \calL_s) < h^1(Z'', \calL_s)$.

Indeed we have $h^1(Z'', \calL_s) = h^1(\calO_{Z''}) - 1$, and $h^1(A, \calL_s) \leq h^1(\calO_A) - 1$ if $A \geq E_{u}$ and $h^1(A, \calL_s) = h^1(\calO_A)$ if $A \leq Z'' - E_{u}$.

In the first case, we have clearly $h^1(\calO_A) - 1 < h^1(\calO_{Z''}) - 1$ and in the second case we have $h^1(\calO_A) \leq h^1(\calO_{Z'' - E_{u} }) = h^1(\calO_{Z''}) - 2$, which proves our claim completely.

\medskip

Let us have the cycle $Z''_r = Z'' - E_{u}$ and consider the line bundle $\calO_{Z''}( Z'' + K) \otimes \calL_s^{-1}$ and the restriction of this line bundle $\calL_r = \calO_{Z''_r}( Z'' + K)$.
Let us denote the Chern class $l'' = Z'' - Z_K + 2E_{u}^* $.

Consider the subspace $\eca^{ l'', \calL_r}(Z'')  \subset \eca^{l''}(Z'')$ consisting of  the divisors $D \in \eca^{l''}(Z'')$, such that $\calL_r = \calO_{Z''_r}(D)$.

We know that $\eca^{ l'', \calL_r}(Z'')$ is an irreducible smooth algebraic subvariety of $\eca^{l''}(Z'')$ and we have the relative Abel map $c^{ l'', \calL_r}(Z'') : \eca^{ l'', \calL_r}(Z'') \to r^{-1}(\calL_r)$.

Let us prove next the following lemma:

\begin{lemma}\label{relabelconst2}
The relative Abel map $c^{ l'', \calL_r}(Z'') : \eca^{ l'', \calL_r}(Z'') \to r^{-1}(\calL_r)$ is constant and the image is  the line bundle $\calO_{Z''}( Z'' + K) \otimes \calL_s^{-1}$.
\end{lemma}
\begin{proof}

First of all we know that $H^0(\calO_{Z''}( Z'' + K) \otimes \calL_s^{-1})_{reg} \neq \emptyset$.

This means that the line bundle $\calO_{Z''}( Z'' + K) \otimes \calL_s^{-1}$ is in the image of the map $c^{ l'', \calL_r}(Z'')$ and furthermore we have obviously 
$$(c^{ l'', \calL_r}(Z''))^{-1}(\calO_{Z''}( Z'' + K) \otimes \calL_s^{-1}) = (c^{l''}(Z''))^{-1}(\calO_{Z''}( Z'' + K) \otimes \calL_s^{-1}).$$

On the other hand we know that $h^1(\calO_{Z''}( Z'' + K) \otimes \calL_s^{-1}) = h^0(Z'', \calL_s) = 2$.

We claim in the following that $ h^1(\calO_{Z''_r}(Z'' + K)) = 0$, notice that we have $h^1(\calO_{Z''_r}(Z'' + K)) = h^0(\calO_{Z''_r}(- E_{u}))$ by Seere duality.

Let us have the following exact sequence:

\begin{equation*}
0 \to H^0(\calO_{Z''_r}(- E_{u})) \to H^0(\calO_{Z''}) \to H^0(\calO_{E_u}).
\end{equation*}

We know that tha map $H^0(\calO_{Z''}) \to H^0(\calO_{E_u})$ is surjective and $h^0(\calO_{E_u}) = 1$.

It means that we indeed get $h^0(Z'' - E_{u}, - E_{u}) = h^0(\calO_{Z''}) - 1  = 0$  since $H^0(\calO_{Z''}(Z'' + K))_{reg} \neq \emptyset$ and so $h^0(\calO_{Z''}) = 1$ (since the resolution
is generic).

\emph{It means that we have $h^1(\calO_{Z''}( Z'' + K) \otimes \calL_s^{-1}) =2$ and $ h^1(\calO_{Z''_r}(Z'' + K)) = 0$.}

Assume that $c^{ l'', \calL_r}(Z'') $ is nonconstant, this means that $ (c^{l''}(Z''))^{-1}( \calO_{Z''}( Z'' + K) \otimes \calL_s^{-1})$ is a proper smooth algebraic subvariety of $ \eca^{ l'', \calL_r}(Z'') $.

On the other hand notice that by Theorem\ref{relativspace} we have:
$$\dim(\eca^{ l'', \calL_r}(Z'') ) = (l'', Z'')- h^1(\calO_{Z''_r}) + h^1(Z''_r,\calL_r) =  (l'', Z'')- h^1(\calO_{Z''_r}).$$

We also have:
$$\dim((c^{l''}(Z''))^{-1}( \calO_{Z''}( Z'' + K) \otimes \calL_s^{-1})   ) = h^0(\calO_{Z''}( Z'' + K) \otimes \calL_s^{-1})  - h^0(\calO_{Z''}) = h^0(\calO_{Z''}( Z'' + K) \otimes \calL_s^{-1})  - 1.$$

It means that we have:
$$\dim((c^{l''}(Z''))^{-1}( \calO_{Z''}( Z'' + K) \otimes \calL_s^{-1})   ) = h^1( Z'', \calL_s) - 1 = h^1(\calO_{Z''}) - 2.$$

Notice that $ (l'', Z'')- h^1(\calO_{Z''_r})  = (l'', Z'') - h^1(\calO_{Z''}) + 2 = (l'', Z'') + \chi(Z'') + 1$ and $h^1(\calO_{Z''}) - 2 = - \chi(Z'') - 1$ and an easy calculation shows that they
are the same.

This proves that $\dim((c^{l''}(Z''))^{-1}( \calO_{Z''}( Z'' + K) \otimes \calL_s^{-1})   )  = \dim(\eca^{ l'', \calL_r}(Z'') )$, so indeed the map $c^{ l'', \calL_r}(Z'')$ is constant.

\end{proof}

In the follwing let us have a generic section $s \in H^0(\calO_{Z'' - E_{u}}(Z'' + K))_{reg}$ and let us denote $|s| = D$.

We know from the the previous statement that $\calO_{Z''}( Z'' + K) \otimes \calL_s^{-1} = \calO_{Z''}( D)$, which means that $\calL_s = \calO_{Z''}(Z'' + K - D)$.

Let us recal that the curve $\overline{\im(c^{-E_{u}^*}(Z') )}$ has a selfsymmetry.
Indeed we have the line bundle $\calL' \in \pic^{-2E_{u}^* }(Z')$, such that if $\calL_1$ is a generic line bundle in $ \im(c^{-E_{u}^*}(Z') )$, then $\calL_2 = \calL' \otimes \calL_1^{-1}$ is a generic line bundle in $ \im(c^{-E_{u}^*}(Z') )$. 

This is equivalent to $H^0(Z', \calL')_{reg} \neq \emptyset$ and $h^0(Z', \calL') = 2$.

We also know that the restriction of the line bundle $\calL'$ under the restriction map $r' : \pic^{ - 2E_{u}^*}(Z') \to \pic^{ -2 E_{u}^*}(Z'')$ must be $\calL_s$.

It means that $\calL' \in r'^{-1}(\calL_s)$ and notice that $\dim(r'^{-1}(\calL_s)) = 1$ since $h^1(\calO_{Z'}) - h^1(\calO_{Z''}) = 1$.

We know that $h^0(Z'', \calL_s) = 2$ and there is a function $g: U' \to E_{u}$, where $U' \subset E_{u}$, such that $\calL_s = \calO_{Z''}(p + g(p))$ for $p \in U'$.

It follows that $\calO_{Z'}(p + g(p)) = \calL'$ for generic $p \in E_{u'}$, which means that the line bundle and Chern class $(- 2E_{u}^*, \calL_s)$ are not relatively generic on the cycle $Z'$.

The next proposition will give the desired contradiction:

\begin{proposition}\label{relgencont2}
The line bundle and Chern class $( -2E_{u}^*, \calL_s)$ are relatively generic on the cycle $Z'$.
\end{proposition}
\begin{proof}

Assume first that we are in the first case, which means that $u$ has got $1$ neighbour in $|Z'|$ and $(Z')_{u_{n_1}} = 6$.

We also know that $Z''_{u_{n_1}} = 4$ and $H^1(\calO_{ Z' - E_{u_{n_1}}}) = H^1(\calO_{Z''})$, and so there is a unique line bundle in $ \pic^{-2 E_{u}^*}(Z' - E_{u_{n_1}})$, which projects to $\calL_s$, let us denote it by $\calL_{h}$.

For the proof of our theorem we are enough to prove in the following that the line bundle and Chern class $( -2E_{u}^*, \calL_h)$ are relatively generic on the cycle $Z'$.

By Theorem\ref{th:dominantrel} we have to show that:

\begin{equation*}
\chi(2 E_{u}^* ) - h^1(\calO_{Z' - E_{u_{n_1}}}(Z'' + K - D)) < \chi(2 E_{u}^* + A ) -  h^1(\calO_{\min( Z'- A, Z' - E_{u_{n_1}})}( Z'' + K - A - D)) ,
\end{equation*}
in case we have $0 < A \leq Z'$ and $H^0(\calO_{\min( Z'- A, Z' - E_{u_{n_1}})}( Z'' + K - A - D))_{reg} \neq \emptyset$.

We know that $ h^1(\calO_{Z' - E_{u_{n_1}}}(Z'' + K - D))  = h^1(\calO_{Z''}(Z'' + K - D)) = h^1(\calO_{Z''}) - 1 = -\chi(Z'')$, so we need to deal with the cohomology numbers $h^1(\calO_{\min( Z'- A, Z' - E_{u_{n_1}})}( Z'' + K - A - D))$ in the following.

\medskip

\underline{Asumme first that $A \geq E_{u}$:}

\medskip

In this case we know that $h^1(\calO_{\min( Z'- A, Z' - E_{u_{n_1}})}( Z'' + K - A - D)) = h^1(\calO_{\min( Z'- A, Z''_r)}( Z'' + K - A - D))$,
since $H^0(\calO_{Z'- A}(Z'- A + K)) \subset H^0(\calO_{Z''_r}(Z''_r+ K))$.

Since $\calO_{Z''_r}(Z'' + K - A - D) = \calO_{Z''_r}(- A)$  we get that 
$$h^1(\calO_{\min( Z'- A, Z' - E_{u_{n_1}})}(Z'' + K - A - D)) = h^1(\calO_{\min( Z'- A, Z' - E_{u_{n_1}}- E_{u})}(  - A) ).$$
 
It means that we have to prove:

\begin{equation*}
\chi(2E_{u}^*) - h^1( \calO_{Z' - E_{w'}}(Z'' + K - D)) <  \chi(2E_{u}^* + A ) - h^1(\calO_{\min( Z'- A, Z' - E_{u_{n_1}}- E_{u})}(  - A) ).
\end{equation*}

Let us have a generic line bundle $\calL_{gen} \in \im(c^{-2E_{u}^* }(Z' - E_{u_{n_1}}))$ on the image of the Abel map, it means in particular that $H^0(Z' - E_{u_{n_1}}, \calL_{gen})_{reg} \neq \emptyset$ and also we have $h^0(Z' - E_{u_{n_1}}, \calL_{gen})= 1$.

Consider the line bundles in the inverse image $r_{Z' - E_{u_{n_1}}}^{-1}(\calL_{gen})$, where $r_{Z' - E_{u_{n_1}}}: \pic^{-2E_{u}^*}(Z') \to \pic^{-2E_{u}^*}(Z' - E_{u_{n_1}})$ is the restriction map. 

We know that the space $r_{Z' - E_{u_{n_1}}}^{-1}(\calL_{gen})$ is $1$ dimensional and the image of the Abel map $c^{-2E_{u}^*}(Z')$ intersects it in one point.

It means that the image of the relative Abel map $c^{-2E_{u}^*, \calL_{gen}}(Z')$ is $1$-codimensional in the space $r_{Z' - E_{u_{n_1}}}^{-1}(\calL_{gen})$.

This means in particular that the relative Abel map is not dominant, which means by Theorem\ref{th:dominantrel} that we have:

\begin{equation*}
\chi(2E_{u}^* ) - h^1( Z' - E_{u_{n_1}}, \calL_{gen}) \geq \min_{0 < B \leq Z'} \left( \chi(2 E_u^* + B ) -  h^1(\calO_{\min( Z'- B, Z' - E_{u_{n_1}})}(-B) \otimes  \calL_{gen}) \right).
\end{equation*}

On the other hand by part 2) of Theorem\ref{th:hegy2rel} we get that indeed we have equality:

\begin{equation*}
\chi(2E_{u}^* ) - h^1( Z' - E_{u_{n_1}}, \calL_{gen}) = \min_{0 < B \leq Z'} \left( \chi(2 E_u^* + B ) -  h^1(\calO_{\min( Z'- B, Z' - E_{u_{n_1}})}(-B) \otimes  \calL_{gen}) \right).
\end{equation*}

If we substitute $B = A$, then we get:

\begin{equation*}
\chi(2 E_u^*) - h^1( Z' - E_{u_{n_1}}, \calL_{gen}) \leq \chi(2 E_u^* + A ) -   h^1(\calO_{\min( Z'- A,Z' - E_{u_{n_1}})}(-A) \otimes  \calL_{gen}) .
\end{equation*}

\begin{equation*}
\chi(2 E_u^*) - h^1( Z' - E_{u_{n_1}}, \calL_{gen}) \leq \chi(2 E_u^* + A ) -  h^1(\calO_{\min( Z'- A,Z' - E_{u_{n_1}}- E_{u})}(- A)) .
\end{equation*}

We know that $\chi(2 E_u^*) - h^1( Z' - E_{u_{n_1}}, \calL_{gen}) = \chi(2 E_u^*)- h^1(\calO_{Z' - E_{u_{n_1}}}( Z'' + K - D)) + 1$, which proves our statement in this case.

\medskip

\underline{Assume in the following that $A \ngeq E_{u}$ and $A \ngeq E_{u_{n_1}}$:}

\medskip

In this case we have $h^1(\calO_{\min( Z'- A, Z' - E_{u_{n_1}})}( Z'' + K - A - D))  = h^1(\calO_{Z'- A- E_{u_{n_1}}}(Z'' + K - A - D))$.

Notice that the cohomology cycle of $Z'- E_{u_{n_1}}$ is $Z''$, which means that the cohomology cycle of $Z'- A- E_{u_{n_1}}$ is at most $Z'- A- 2E_{u_{n_1}}$.

We have furthermore $H^0(\calO_{Z'- A- E_{u_{n_1}}}(Z'' + K - A - D))_{reg} \neq \emptyset$ which yields $$ h^1(\calO_{Z'- A- E_{u_{n_1}}}(Z'' + K - A - D)) =  h^1(\calO_{Z'- A- 2E_{u_{n_1}}}(Z'' + K - A - D)).$$

By Seere duality we get that $h^1(\calO_{Z'- A- 2E_{u_{n_1}}}(Z'' + K - A - D))= h^0(\calO_{Z'- A- 2E_{u_{n_1}}}( Z' -Z''- 2E_{u_{n_1}} + D))$.

Notice that $Z' -Z''- 2E_{u_{n_1}} + D$ is an effective divisor (although not $0$-dimensional), whose support doesn't intersect the exceptional divisor $E_{u}$.

This means that the map $ H^0(\calO_{Z'- A- 2E_{u_{n_1}}}( Z' -Z''- 2E_{u_{n_1}} + D)) \to H^0(\calO_{E_{u}})$ is surjective.

Let us have the following exact sequence:

\begin{equation*}
0 \to  H^0(\calO_{Z'- A- 2E_{u_{n_1}} - E_u}( Z' -Z''- 2E_{u_{n_1}}- E_u + D)) \to         H^0(\calO_{Z'- A- 2E_{u_{n_1}}}( Z' -Z''- 2E_{u_{n_1}} + D)) \to H^0(\calO_{E_{u}}).
\end{equation*}

Since the map $ H^0(\calO_{Z'- A- 2E_{u_{n_1}}}( Z' -Z''- 2E_{u_{n_1}} + D)) \to H^0(\calO_{E_{u}})$ is surjective and $h^0(\calO_{E_{u}}) = 1$ we get that:

$$h^0(\calO_{Z'- A- 2E_{u_{n_1}}}( Z' -Z''- 2E_{u_{n_1}} + D)) = h^0(\calO_{Z'- A- 2E_{u_{n_1}} - E_u}( Z' -Z''- 2E_{u_{n_1}}- E_u + D))+ 1.$$

On the other hand again by Seere duality we have 
$$h^0(\calO_{Z'- A- 2E_{u_{n_1}} - E_u}( Z' -Z''- 2E_{u_{n_1}}- E_u + D)) = h^1(\calO_{Z'- A- 2E_{u_{n_1}} - E_u}(Z'' + K - A - D)).$$

It means that we get $h^1(\calO_{\min( Z'- A, Z' - E_{u_{n_1}})}( Z'' + K - A - D))  = h^1(\calO_{Z'- A- 2E_{u_{n_1}} - E_u}(Z'' + K - A - D)) + 1$ and we should prove:

\begin{equation*}
\chi(2E_{u}^*)  - h^1(\calO_{Z' - E_{u_{n_1}}}(Z'' + K - D))  <  \chi(2E_u^* + A ) -  h^1(\calO_{Z'- A- 2E_{u_{n_1}} - E_u}(Z'' + K - A - D))  - 1 .
\end{equation*}

\begin{equation*}
 - h^1(\calO_{Z' - E_{w'}}(Z'' + K - D))  <  \chi(A) - h^1(\calO_{Z'- A- 2E_{u_{n_1}} - E_u}(- A )) - 1 .
\end{equation*}

Notice that $ h^1(\calO_{Z' - E_{u_{n_1}}}(Z'' + K - D)) = h^1(Z' - E_{u_{n_1}}, \calL_h) =  h^1(Z'', \calL_s) = h^1( \calO_{Z'- 2E_{u_{n_1}} - E_{u}})  +1$, so we have to prove that:

\begin{equation*}
  -h^1( \calO_{Z'- 2E_{u_{n_1}} - E_{u}}) - 1  <  \chi(A) -  h^1(\calO_{Z'- A- 2E_{u_{n_1}} - E_u}(- A )) - 1 .
\end{equation*}

\begin{equation*}
h^1( \calO_{Z'- 2E_{u_{n_1}} - E_{u}})  > h^1(\calO_{Z'- A- 2E_{u_{n_1}} - E_u}(- A )) - \chi(A)
\end{equation*}

On the other hand this is trivial since $H^0( \calO_{Z'- 2E_{u_{n_1}} - E_{u}})_{reg} \neq \emptyset$.

\medskip

\underline{Assume in the following that $A \ngeq E_{u}$, and assume that $1 \leq A_{u_{n_1}} \leq 3$:}

\medskip

This will be the most subtle case of the proof.

In this case we have $h^1(\calO_{\min( Z'- A, Z' - E_{u_{n_1}})}(Z'' + K - A - D))  = h^1(\calO_{Z'- A}(Z'' + K - A - D))$.

This means by Seere duality that $h^1(\calO_{Z'- A}(Z'' + K - A - D)) = h^0(\calO_{Z'- A}(Z' -Z''+ D))$.

Let us prove first the following Lemma, where we will heavily use the genericity of the analytic structure of $Z$ and $\tX$:

\begin{lemma}\label{propred2}
We have the equality $h^1(\calO_{Z'- A}( Z'' + K - A - D)) = h^1(\calO_{Z'- A - E_{u_{n_1}}}( Z'' + K - A - D))$.
\end{lemma}
\begin{proof}

Let us blow up the exceptional divisor $E_{u_{n_1}}$ sequentially $t = (Z'- A)_{u_{n_1}}-1$ times in $N$ different generic points, where $N$ is a large number.

Let us denote the set of last vertices by $S_n$ and let us blow up the last vertices $M$ times, where $M$ is a large number, let us denote the vertices we get by $S$.

Let the new singularity be $\tX_{new}$, and let us denote the new excpetional divisors by $E_{i, j}$, where $ 1 \leq i \leq N$ and $1 \leq j \leq t$ and $E_s, s \in S$.

Let us denote the cycles
$$Z'_{new} = \pi^*(Z') - \sum_{1 \leq i \leq N, 1 \leq j \leq t} j \cdot E_{i, j} - \sum_{s \in S}(t+1) E_s,$$
$$Z''_{new} = \pi^*(Z'') - \sum_{1 \leq i \leq N, 1 \leq j \leq t} j \cdot E_{i, j} - \sum_{s \in S}(t+1) E_s.$$

\medskip

We know that $h^1(\calO_{Z'- A}( Z'' + K - A - D)) = h^1(\calO_{Z'_{new}-  \pi^*(A)}(\pi^*(Z'' + K - A - D)))$.

Let us denote the vertex set $\calv_{new} \setminus S = \calv_m$ and the subsingularity supported on the vertex set $\calv_m$ by $\tX_m$ and the restriction of the cycle $Z'_{new}-  \pi^*(A)$ by $(Z'_{new}-  \pi^*(A))_m$.

Similarly let us denote the vertex set $\calv_{new} \setminus (S \cup S_n) = \calv_u$ and the corresponding resolution by $\tX_u$ and the restriction of the cycle $Z'_{new}-  \pi^*(A)$ by $(Z'_{new}-  \pi^*(A))_u$.

Let us fix the analytic type of $\tX_m$ and let us change the analytic type of the resolution $\tX$  by moving the contact of the tubular neighborhoods of the exceptional divisors $E_s | s \in S$ with their neighbours.

Notice that the differential forms in $H^1(\calO_{Z'-A})^*$ haven't got a pole on the exceptional divisors $E_s | s \in S$, and have got poles on their neighbours of order at most $1$, since
we have blown up the exceptional divisor $E_{u_{n_1}}$ $t$ times and at each blow up the order of pole of a differential form decreases by at least $1$ (See \cite{NNI}).

Since $H^0(\calO_{Z'_{new}-  \pi^*(A)}( \pi^*(Z'' + K - A - D)))_{reg} \neq \emptyset$ this means in particular that:

$$ h^1(\calO_{Z'_{new}-  \pi^*(A)}( \pi^*(Z'' + K - A - D))) = h^1(\calO_{(Z'_{new}-  \pi^*(A))_m}(\pi^*(Z'' + K - A - D))). $$

Notice that since $t \geq 1$  and $C_{u_{n_1}} \leq 1$, where $C$ is the cohomological cycle of the cycle $Z'' - E_{u}$ we have:

$$H^0(\calO_{Z'' - E_{u}}(Z'' - E_{u} + K)) \subset H^0(\calO_{(Z'_{new})_u}((Z'_{new})_u + K)).$$

\emph{It means that a line bundle on the cycle $\pi^*( Z'' - E_{u})$ is determined by its restriction to the cycle $\min ((Z'_{new})_u, \pi^*( Z'' - E_{u}))$.}

On the other hand notice that while changing the glueing of the exceptional divisors $E_s | s \in S$ with their neighbours the line bundle $\calO_{(Z'_{new})_u}(\pi^*(Z'' + K))$ doesn't change, so we can use the same divisor $D$ during the process simultaneously.

Notice that while moving generically the contact points of the exceptional divisors $E_s | s \in S$, the line bundle $\calO_{(Z'_{new}-  \pi^*(A))_m}( \pi^*(Z'' + K - A - D) )$ becomes a generic line bundle in $r_u^{-1}(\calO_{(Z'_{new}-  \pi^*(A))_u}( \pi^*(Z'' + K - A - D) ))$, where $r_u$ is the restriction map $\pic((Z'_{new}-  \pi^*(A))_m) \to \pic((Z'_{new}-  \pi^*(A))_u)$.

Indeed we have $\calO_{(Z'_{new}-  \pi^*(A))_m}( \pi^*(Z'' + K - A - D) ) = \calO_{(Z'_{new}-  \pi^*(A))_m}(  Z''_{new} + K_m - \pi^*(A) -D)$ and notice that the line 
bundle $\calO_{(Z'_{new}-  \pi^*(A))_m}( K_m  -D)$ does not change if we change the contact points the exceptional divisors $E_s | s \in S$.

On the other hand the cycle $ Z''_{new} - \pi^*(A)$ has got nonzero coeficcients along the exceptional divisors $E_s | s \in S$ and our claim follows from $e_{(Z'_{new}-  \pi^*(A))_m}(s_n)
= \dim(\im(c^{-M E_{s_n}^*}(   (Z'_{new}-  \pi^*(A))_m ))$ if $s_n \in S_n$ and $M$ is enough large.

Notice that $ H^0(\calO_{Z'_{new}-  \pi^*(A)}(\pi^*(Z'' + K - A - D) ))_{reg} \neq \emptyset$ holds for our original analytic structure.

This property is equivalent to $h^0(\calO_{Z'_{new}-  \pi^*(A)}(\pi^*(Z'' + K - A - D) )) > h^0(\calO_{Z'_{new}-  \pi^*(A) - E_I}(\pi^*(Z'' + K - A - D) - E_I ))$ for every subset
$I \subset |Z'_{new}-  \pi^*(A)|$.

\emph{On the other hand the numbers $h^0(\calO_{Z'_{new}-  \pi^*(A) - E_I}(\pi^*(Z'' + K - A - D) - E_I ))$ change semicontinously under a deformation.}

\medskip

However since the original analytic structure was already generic we can assume (by restricting to a suitable open neighborhood of the Laufer deformations space which is a complement of analytic subvarieties) that the cohomology numbers
$h^0(\calO_{Z'_{new}-  \pi^*(A) - E_I}(\pi^*(Z'' + K - A - D) - E_I ))$ stay the same if we move the contact points of the exceptional divisors $E_s | s \in S$.

It means finally that the generic line bundle in $r_u^{-1}(\calO_{(Z'_{new}-  \pi^*(A))_u}( \pi^*(Z'' + K - A - D) ))$ has got no fixed components so the relative Abel map is dominant.

By Theorem\ref{th:dominantrel} we get:

$$h^1(\calO_{Z'_{new}-  \pi^*(A)}( \pi^*(Z'' + K - A - D))) = h^1(\calO_{(Z'_{new}-  \pi^*(A))_u}(\pi^*(Z'' + K - A - D))).$$

On the other hand we know that $H^1(\calO_{(Z'_{new}-  \pi^*(A))_u})^* \subset H^1(\calO_{Z'- A - E_{u_{n_1}}})^*$.

Indeed, if a differential form $\omega \in H^1(\calO_{Z'_{new}-  \pi^*(A)})^*$ has got a pole on the exceptional divisor $E_{u_{n_1}}$ of order $(Z'-A)_{u_{n_1}}$, then since the number $N$ is very large and we blowed up the vertex $E_{u_{n_1}}$in generic points, at one of them the differential form $\omega'$ hasn't got an arrow (a cut in its vanishing set, which is not contained in the union of exceptional divisors).

Indeed it can be easily seen that the maximal number of arrows is bounded by $\max_{0 \leq l \leq Z'- A} (-Z_K + l, E_{u_{n_1}})$.
It means that if $N >  \max_{0 \leq l \leq Z'- A} (-Z_K + l, E_{u_{n_1}})$, then $\omega'$ hasn't got an arrow at one of the points where we blow up the exceptional divisor $E_{u_{n_1}}$.

This means that there is a vertex $s_n \in S_n$, such that $\omega$ hasn't got a pole along the exceptional divisor $E_{s_n}$, so $\omega \notin  H^1(\calO_{(Z'_{new}-  \pi^*(A))_u})^*$ and this proves indeed that $H^1(\calO_{(Z'_{new}-  \pi^*(A))_u})^* \subset H^1(\calO_{Z'- A - E_{u_{n_1}}})^*$.

It means that $h^1(\calO_{(Z'_{new}-  \pi^*(A))_u}(\pi^*(Z'' + K - A - D)))  \leq   h^1(\calO_{Z'- A - E_{u_{n_1}}}(Z'' + K - A - D))$, which means finally that 
$$h^1(\calO_{Z'- A}(Z'' + K - A - D)) = h^1(\calO_{Z'- A - E_{u_{n_1}}}(Z'' + K - A - D)).$$

\end{proof}

\begin{remark}
Notice that the proof of this lemma works also in case $A_{u_{n_1}} \leq 4$, we will use that in the proof of the next case.
\end{remark}

Notice that if $A_{u_{n_1}} \leq 3$, then repeating the same proof with $t = (Z'- A)_{u_{n_1}}-2$ gives that 

$$h^1(\calO_{Z'- A}(Z'' + K - A - D)) = h^1(\calO_{Z'- A - 2E_{u_{n_1}}}(Z'' + K - A - D)).$$

It means by Seere duality that we get:

$$h^1(\calO_{Z'- A - 2E_{u_{n_1}}}(Z'' + K - A - D)) = h^0(\calO_{Z'- A- 2E_{u_{n_1}}}(Z' -Z''- 2E_{u_{n_1}} + D)).$$

Notice that $Z' -Z''- 2E_{u_{n_1}} + D$ is an effective divisor (although not $0$-dimensional), which doesn't intersect the exceptional divisor $E_{u}$, which means that the map
$ H^0(\calO_{Z'- A- 2E_{u_{n_1}}}(Z' -Z''- 2E_{u_{n_1}} + D)) \to H^0(\calO_{E_{u}})$ is surjective.

Let us have the following exact sequence:

\begin{equation*}
 0 \to      H^0(\calO_{Z'- A-  2E_{u_{n_1}} - E_{u}}( Z' -Z''- 2 E_{u_{n_1}} - E_{u} + D))                       \to H^0(\calO_{Z'- A- 2E_{u_{n_1}}}(Z' -Z''- 2E_{u_{n_1}} + D)) \to H^0(\calO_{E_{u}}).
\end{equation*}

Since the map $ H^0(\calO_{Z'- A- 2E_{u_{n_1}}}(Z' -Z''- 2E_{u_{n_1}} + D)) \to H^0(\calO_{E_{u}})$ is surjective and $h^0(\calO_{E_{u}}) = 1$ we get that:

\begin{equation*}
h^0(\calO_{Z'- A-  2E_{u_{n_1}}}(Z' -Z''-  2E_{u_{n_1}} + D)) =h^0(\calO_{Z'- A-  2E_{u_{n_1}} - E_{u}}( Z' -Z''- 2 E_{u_{n_1}} - E_{u} + D)) + 1.
\end{equation*}

On the other hand again by Seere duality we have 
$$h^0(\calO_{Z'- A-  2E_{u_{n_1}} - E_{u}}( Z' -Z''-  2E_{u_{n_1}} - E_{u} + D)) = h^1(\calO_{Z'- A-  2E_{u_{n_1}} - E_{u}}( Z'' + K - A - D)),$$

$$h^1(\calO_{\min( Z'- A, Z' - E_{u_{n_1}})}( Z'' + K - A - D)) = h^1(\calO_{Z'- A-  E_{u_{n_1}} - E_{u}}(Z'' + K - A - D)) + 1.$$

It means that we should prove:

\begin{equation*}
\chi(2E_{u}^* )  - h^1(\calO_{Z' - E_{u_{n_1}}}( Z'' + K - D))  <  \chi(2E_{u}^*  + A ) - h^1(\calO_{Z'- A- E_{u_{n_1}} - E_{u}}(Z'' + K - A - D))  - 1 .
\end{equation*}

However the proof of this is then formally step by step the same as in the first case, so we finished the proof in this case.

\medskip

\underline{Assume finally that $A \ngeq E_{u}$ and $A_{u_{n_1}} \geq 4$:}

\medskip

 In this case have $h^1(\calO_{\min( Z'- A, Z' - E_{u_{n_1}})}(Z'' + K - A - D))  = h^1(\calO_{Z'- A}(Z'' + K - A - D))$.

 If $A_{w'} = 4$, then Lemma\ref{propred2} shows that

$$h^1(\calO_{Z'- A}(Z'' + K - A - D)) = h^1(\calO_{Z'- A - E_{u_{n_1}}}(Z'' + K - A - D)).$$

On the other hand we have $(Z'- A - E_{u_{n_1}})_{u_{n_1}} = 1$, which means that $H^1(\calO_{\min( Z'- A, Z' - E_{u_{n_1}})})^* \subset H^1(\calO_{Z'- A -  E_{u}})^*$.

It follows that $h^1(\calO_{\min( Z'- A, Z' - E_{u_{n_1}})}(Z'' + K - A - D))  = h^1(\calO_{Z'- A -  E_{u}}( Z'' + K - A - D)) = h^1(\calO_{Z'- A -  E_{u}}(- A))$, since the line bundle
$\calO_{Z'- A -  E_{u}}( Z'' + K - D)$ is trivial.

On the other hand if $A_{w'} \geq 5$, then we have $(Z'- A)_{u_{n_1}} \leq 1$, which means again that 
$$h^1(\calO_{\min( Z'- A, Z' - E_{u_{n_1}})}(Z'' + K - A - D))  = h^1(\calO_{Z'- A -  E_{u}}( Z'' + K - A - D)) = h^1(\calO_{Z'- A -  E_{u}}(- A)).$$

In both cases we have to prove that:

\begin{equation*}
\chi(2 E_{u}^* ) - h^1(\calO_{Z' - E_{u_{n_1}}}( Z'' + K - D)) <  \chi(2E_{u}^* + A ) - h^1(\calO_{Z'- A -  E_{u}}( - A) ).
\end{equation*}

Let us have a generic line bundle $\calL_{gen} \in \im( c^{-2E_{u}^* }(Z' - E_{u_{n_1}}))$ on the image of the Abel map, it means in particular that $H^0(Z' - E_{u_{n_1}}, \calL_{gen})_{reg} \neq \emptyset$ and also we have $h^0(Z' - E_{u_{n_1}}, \calL_{gen})= 1$.

Consider the line bundles in the inverse image $r'^{-1}(\calL_{gen})$, where $r' : \pic^{-2E_{u}^* }(Z') \to \pic^{-2E_{u}^* }(Z' - E_{u_{n_1}})$ is the restriction map. 

We know that the space $r'^{-1}(\calL_{gen})$ is $1$ dimensional and the image of the Abel map $c^{-2E_{u}^*}(Z')$ intersects it in one point.

It means that the image of the relative Abel map $c^{-2E_{u}^*, \calL_{gen}}(Z')$ is $1$-codimensional in the space $r'^{-1}(\calL_{gen})$.

Similarly as in the first case we get:

\begin{equation*}
\chi(2E_{u}^* ) - h^1( Z' - E_{u_{n_1}}, \calL_{gen}) = \min_{0 < B \leq Z'} \left( \chi(2 E_u^* + B ) -  h^1(\calO_{\min( Z'- B, Z' - E_{u_{n_1}})}(-B) \otimes  \calL_{gen}) \right).
\end{equation*}

If we substitute $B = A$, then we get:

\begin{equation*}
\chi(2E_{u}^* ) - h^1( Z' - E_{u_{n_1}}, \calL_{gen}) \leq \chi(2E_{u}^*  + A ) -  h^1(\calO_{\min( Z'- A, Z' - E_{u_{n_1}})}(-A) \otimes \calL_{gen}) .
\end{equation*}

Since $h^1(\calO_{Z'- A -  E_{u}}( - A)) \leq h^1(\calO_{\min( Z'- A, Z' - E_{u_{n_1}})}(-A) \otimes \calL_{gen})$ it means:

\begin{equation*}
\chi(2E_{u}^*) - h^1( Z' - E_{u_{n_1}}, \calL_{gen}) \leq \chi(2E_{u}^* + A ) -  h^1(\calO_{Z'- A -  E_{u}}( - A)) .
\end{equation*}

We know that $\chi(2E_{u}^*) - h^1( Z' - E_{u_{n_1}}, \calL_{gen}) = \chi(2E_{u}^* ) - h^1(\calO_{Z' - E_{u_{n_1}}}(Z'' + K - D)) + 1$, which proves our statement also in these cases.

\bigskip

The proof of the other two major cases about the shape of the support $|Z'|$ will be much simpler.

\bigskip

\underline{Assume in the following that $u$ has got $2$ neighbours in $|Z'|$ and $(Z')_{u_{n_1}} = 4$ and $(Z')_{u_{n_2}} = 2$:}

\medskip

Let us denote the intersection point of the exceptional divisors $E_u, E_{u_{n_1}}$ by $I$ and the intersection point of the exceptional divisors $E_u, E_{u_{n_2}}$ by $I'$.

\noindent

Let us have the subspace $V_2 \subset H^0(\calO_{Z'}(Z'+ K))$ consisting of the sections which vanish at the intersection point $I'$.

\noindent

We know from Lemma\ref{cohom} that $V_2 = H^0(\calO_{Z''_2}(Z''_2+ K))$, where $Z''_2$ is the cohomological cycle of the cycle $Z' - E_{u_{n_2}} $.

\noindent

We also have $(Z''_2)_{u} = 1, (Z''_2)_{u_{n_1}} = 4, (Z''_2)_{u_{n_2}} = 0$ and $e_{Z''_2}(u) = 2$.

We know that $h^1(\calO_{Z' - E_{u_{n_2}} }) = h^1(\calO_{Z'}) - 1$ and $H^0(\calO_{Z''_2} ( Z''_2 + K  ))_{reg} \neq \emptyset$, which means that 

$$1- \chi(Z''_2) =  h^1(\calO_{Z''_2}) = h^1(\calO_{Z'}) - 1 =  - \chi(Z').$$

Similarly let us have the subspace  $V_1 \subset H^0(\calO_{Z'}(Z'+ K))$ consisting of the sections which vanish at the intersection point $I$.

We know from Lemma\ref{cohom} that $V_1 = H^0(\calO_{Z''_1}(Z''_1+ K))$, where $Z''_1$ is the cohomological cycle of the cycle $Z' - E_{u_{n_1}} $.

We also have $(Z''_1)_{u} = 1, (Z''_1)_{u_{n_1}} = 2, (Z''_1)_{u_{n_2}} = 2$ and $e_{Z''_1}(u) = 2$.

We know that $h^1(\calO_{Z' - E_{u_{n_1}} }) = h^1(\calO_{Z'}) - 1$ and $H^0(\calO_{Z''_1} ( Z''_1 + K  ))_{reg} \neq \emptyset$, which means that 

$$1 - \chi(Z''_{1}) =  h^1(\calO_{Z''_1}) = h^1(\calO_{Z'}) - 1 =   - \chi(Z').$$

Let us have furthermore the subspace $V_{1, 2} \subset H^0(\calO_{Z''_1}(Z''_1 + K))$ consisting of the sections, which vanish at the intersection point $I'$.

We know from Lemma\ref{cohom} that $V_{1, 2} = H^0(\calO_{Z''_{1, 2}}(Z''_{1, 2}+ K))$, where $Z''_{1, 2}$ is the cohomological cycle of the cycle $Z''_1 - E_{u_{n_2}}$ and also the cohomological cycle of the cylce $Z''_2 - E_{u_{n_1}}$.

We know that the cycle $Z''_{1, 2}$ is connected since it is a cohomological cycle.

We also know that $(Z''_{1, 2})_{u} = 1, (Z''_{1, 2})_{u_{n_1}} = 2, (Z''_{1, 2})_{u_{n_2}} = 0$ and $e_{Z''_{1, 2}}(u) = 1$.

We know that $h^1(\calO_{Z''_{1, 2}}) = h^1(\calO_{Z''_1}) - 1$ and $H^0(\calO_{Z''_{1, 2}}( Z''_{1, 2} + K  ))_{reg} \neq \emptyset$, which means that 

$$ 1 - \chi( Z''_{1, 2}) = h^1(\calO_{Z''_{1, 2}}) = h^1(\calO_{Z''_1}) - 1 =  h^1(\calO_{Z'}) - 2 = -1 - \chi(Z').$$

\medskip

Let us denote by $\calL_{s, 1}$ and $\calL_{s, 2}$ the restrictions of $\calL'$ to $\pic^{-2 E_u^*}(Z''_1)$ and $\pic^{-2 E_u^*}(Z''_2)$, in particular $\calL_{s, i}$ denotes the unique line bundle in $\im(c^{-2 E_u^*}(Z''_i))$ such that $h^0(Z''_i, \calL_{s, i}) = 2$.

Consider a generic section $s_2 \in H^0(\calO_{Z''_2 - E_{u}}(Z''_2 + K))_{reg}$ and let us denote $|s_2| = D_2$.

We know exactly the same way as in Lemma\ref{relabelconst2} that $\calO_{Z''_2}( Z''_2+ K) \otimes \calL_{s, 2}^{-1} = \calO_{Z''_2}( D_2)$, which means that
$$\calL_{s, 2} = \calO_{Z''_2}(Z''_2 + K - D_2).$$

Similarly consider a generic section $s_1 \in H^0(\calO_{Z''_1 - E_{u}}(Z''_1 + K))_{reg}$ and let us denote $|s_1| = D_1$.

We know exactly the same way as in Lemma\ref{relabelconst2} that $\calO_{Z''_1}( Z''_1+ K) \otimes \calL_{s, 1}^{-1} = \calO_{Z''_2}( D_1)$ which means that
$$\calL_{s, 1}= \calO_{Z''_1}(Z''_1 + K - D_1).$$

Notice that the sections $s_1, s_2, D_1, D_2$ can be fixed if we fix a generic analytic type on the cycle $Z' - E_u$, which have got two components, say $E_{u_{n_1}} \leq A, E_{n_{u_2}} \leq B$ and we fix the two cuts what $E_u$ gives on the cycles $A, B$.

Notice that we can still move the analytic type of $\tX$ if we move the intersection points $I, I'$ on the exceptional divisor $E_u$ while keeping the datas fixed above.

If $\calL \in \im(c^{-2E_{u}^*}(Z'))$ is the unique line bundle, such that $h^0(Z', \calL) = 2$, then we have $\calL_{s, 1} = \calL | Z''_1$ and $\calL_{s, 2} = \calL | Z''_2$, it follows obviously that 
$$\calL_{s, 1}  | Z''_{1, 2} = \calL_{s, 1}  | Z''_{1, 2}.$$

It means that $\calO_{Z''_{1, 2}}(Z''_1 + K - D_1) = \calO_{Z''_{1, 2}}(Z''_{2} + K - D_2)$, so $\calO_{Z''_{1, 2}}(Z''_1 - D_1) = \calO_{Z''_{1, 2}}(Z''_{2}- D_2)$.

Let's move the intersection point $I'$ on the exceptional divisor $E_u$ while keeping the datas fixed above, then the restriction of divisors $ D_1, D_2$ to $Z''_{1, 2}$ are remaining the same.

This means that we get that the line bundle $\calO_{Z''_{1, 2}}(Z''_1 - Z''_2)$ doesn't change also.

Let us tdenote the restriction of the cycle $Z''_1 - Z''_2$ to he vertex set$|A| \cup u$ by $Z'''$. Notice that we have $(Z''_1 - Z''_2)_{u_{n_2}} = 2$.

Notice that $\calO_{Z''_{1, 2}}(Z''_1 - Z''_2) = \calO_{Z''_{1, 2}}(Z''') \otimes \calO_{Z''_{1, 2}}(2 E_{u_{n_2}})$ and the line bundle $\calO_{Z''_{1, 2}}(Z''')$ remains constant while we move the intersection point $I'$.

On the other hand we know that $e_{Z''_{1, 2}}(u) > 0$, which means that the line bundle $\calO_{Z''_{1, 2}}(2 E_{u_{n_2}})$ changes if we move the intersection point $I'$ generically, this contradiction proves our proposition also in this case.

\bigskip

\underline{Assume in the following that $u$ has got $3$ neighbours in $|Z'|$ and $(Z')_{u_{n_1}} = (Z')_{u_{n_2}} = (Z')_{u_{n_3}} = 2$:}

\medskip

Let us denote the intersection point of the exceptional divisors $E_u, E_{u_{n_1}}$ by $I$ and the intersection point of the exceptional divisors $E_u, E_{u_{n_2}}$ by $I'$.

Let us have the subspace $V_2 \subset H^0(\calO_{Z'}(Z'+ K))$ consisting of the sections which vanish at the intersection point $I'$.

\noindent

We know from Lemma\ref{cohom} that $V_2 = H^0(\calO_{Z''_2}(Z''_2+ K))$, where $Z''_2$ is the cohomological cycle of the cycle $Z' - E_{u_{n_2}} $.

\noindent

We also have $(Z''_2)_{u} = 1, (Z''_2)_{u_{n_1}} = 2, (Z''_2)_{u_{n_2}} = 0$, $(Z''_2)_{u_{n_3}} = 2$ and $e_{Z''_2}(u) = 2$.

We know that $h^1(\calO_{Z' - E_{u_{n_2}} }) = h^1(\calO_{Z'}) - 1$ and $H^0(\calO_{Z''_2} ( Z''_2 + K  ))_{reg} \neq \emptyset$, which means that 

$$1- \chi(Z''_2) =  h^1(\calO_{Z''_2}) = h^1(\calO_{Z'}) - 1 =  - \chi(Z').$$

Similarly let us have the subspace  $V_1 \subset H^0(\calO_{Z'}(Z'+ K))$ consisting of the sections which vanish at the intersection point $I$.

We know from Lemma\ref{cohom} that $V_1 = H^0(\calO_{Z''_1}(Z''_1+ K))$, where $Z''_1$ is the cohomological cycle of the cycle $Z' - E_{u_{n_1}} $.

We also have $(Z''_1)_{u} = 1, (Z''_1)_{u_{n_1}} = 0, (Z''_1)_{u_{n_2}} = 2, (Z''_1)_{u_{n_3}} = 2$ and $e_{Z''_1}(u) = 2$.

We know that $h^1(\calO_{Z' - E_{u_{n_1}} }) = h^1(\calO_{Z'}) - 1$ and $H^0(\calO_{Z''_1} ( Z''_1 + K  ))_{reg} \neq \emptyset$, which means that 

$$1 - \chi(Z''_{1}) =  h^1(\calO_{Z''_1}) = h^1(\calO_{Z'}) - 1 =   - \chi(Z').$$

Let us have furthermore the subspace $V_{1, 2} \subset H^0(\calO_{Z''_1}(Z''_1 + K))$ consisting of the sections, which vanish at the intersection point $I'$.

We know from Lemma\ref{cohom} that $V_{1, 2} = H^0(\calO_{Z''_{1, 2}}(Z''_{1, 2}+ K))$, where $Z''_{1, 2}$ is the cohomological cycle of the cycle $Z''_1 - E_{u_{n_2}}$ and also the cohomological cycle of the cylce $Z''_2 - E_{u_{n_1}}$.

We know that the cycle $Z''_{1, 2}$ is connected since it is a cohomological cycle.

We also know that $(Z''_{1, 2})_{u} = 1, (Z''_{1, 2})_{u_{n_1}} = 0, (Z''_{1, 2})_{u_{n_2}} = 0, (Z''_{1, 2})_{u_{n_2}} = 2$ and $e_{Z''_{1, 2}}(u) = 1$.

We know that $h^1(\calO_{Z''_{1, 2}}) = h^1(\calO_{Z''_1}) - 1$ and $H^0(\calO_{Z''_{1, 2}}( Z''_{1, 2} + K  ))_{reg} \neq \emptyset$, which means that 

$$ 1 - \chi( Z''_{1, 2}) = h^1(\calO_{Z''_{1, 2}}) = h^1(\calO_{Z''_1}) - 1 =  h^1(\calO_{Z'}) - 2 = -1 - \chi(Z').$$

\medskip

Let us denote by $\calL_{s, 1}$ and $\calL_{s, 2}$ the restrictions of $\calL'$ to $\pic^{-2 E_u^*}(Z''_1)$ and $\pic^{-2 E_u^*}(Z''_2)$, in particular $\calL_{s, i}$ denotes the unique line bundle in $\im(c^{-2 E_u^*}(Z''_i))$ such that $h^0(Z''_i, \calL_{s, i}) = 2$.

Consider a generic section $s_2 \in H^0(\calO_{Z''_2 - E_{u}}(Z''_2 + K))_{reg}$ and let us denote $|s_2| = D_2$.

We know exactly the same way as in Lemma\ref{relabelconst2} that $\calO_{Z''_2}( Z''_2+ K) \otimes \calL_{s, 2}^{-1} = \calO_{Z''_2}( D_2)$, which means that
$$\calL_{s, 2} = \calO_{Z''_2}(Z''_2 + K - D_2).$$

Similarly consider a generic section $s_1 \in H^0(\calO_{Z''_1 - E_{u}}(Z''_1 + K))_{reg}$ and let us denote $|s_1| = D_1$.

We know exactly the same way as in Lemma\ref{relabelconst2} that $\calO_{Z''_1}( Z''_1+ K) \otimes \calL_{s, 1}^{-1} = \calO_{Z''_2}( D_1)$ which means that
$$\calL_{s, 1}= \calO_{Z''_1}(Z''_1 + K - D_1).$$

Notice that the sections $s_1, s_2, D_1, D_2$ can be fixed if we fix a generic analytic type on the cycle $Z' - E_u$, which have got three components, say $E_{u_{n_1}} \leq A, E_{n_{u_2}} \leq B, E_{n_{u_3}} \leq C$ and we fix the three cuts what $E_u$ gives on the cycles $A, B, C$.

Notice that we can still move the analytic type of $\tX$ if we move the intersection points $I, I'$ on the exceptional divisor $E_u$ while keeping the datas fixed above.

Notice that we have $\calL_{s, 1} = \calL' | Z''_1$ and $\calL_{s, 2} = \calL' | Z''_2$, it follows obviously that 
$$\calL_{s, 1}  | Z''_{1, 2} = \calL_{s, 1}  | Z''_{1, 2}.$$

It means that $\calO_{Z''_{1, 2}}(Z''_1 + K - D_1) = \calO_{Z''_{1, 2}}(Z''_{2} + K - D_2)$, so $\calO_{Z''_{1, 2}}(Z''_1 - D_1) = \calO_{Z''_{1, 2}}(Z''_{2}- D_2)$.

Let us move the intersection point $I'$ on the exceptional divisor $E_u$ while keeping the datas fixed above and also the intersection pont $I$ and $E_{u} \cap E_{u_{n_3}}$, then the divisors $ D_1, D_2$ are remaining the same,

This means that we get that the line bundle $\calO_{Z''_{1, 2}}(Z''_1 - Z''_2)$ doesn't change also.

Let us denote the restriction of the cycle $Z''_1 - Z''_2$ to $|C| \cup u$ by $Z'''$. Notice that we have $(Z''_1 - Z''_2)_{u_{n_2}} = 2$ and  $(Z''_1 - Z''_2)_{u_{n_1}} = -2$ .

Notice that $\calO_{Z''_{1, 2}}(Z''_1 - Z''_2) = \calO_{Z''_{1, 2}}(Z''') \otimes \calO_{Z''_{1, 2}}(2 E_{u_{n_2}}) \otimes \calO_{Z''_{1, 2}}(-2 E_{u_{n_1}})$ and the line bundle $\calO_{Z''_{1, 2}}(Z''') \otimes \calO_F(-2 E_{u_{n_1}})$ remains constant while we move the intersection point $I'$.

On the other hand we know that $e_{Z''_{1, 2}}(u) > 0$, which means that the line bundle $\calO_F(2 E_{u_{n_2}})$ changes if we move the intersection point $I'$ generically, this contradiction proves our proposition also in this last case.
\end{proof}

\end{proof}

\section{Proof of Theorem\textbf{C}}

In our last Theorem we show that the condition $e_Z(u') \geq 3$ is crucial and our main theorems are not nessecarily true without this condition.

In fact we prove the following statement:

\begin{theorem}\label{counterex}
 Let us have a rational homology sphere resolution graph $\mathcal{T}$ and a generic resolution $\tX$ corresponding to it.
Consider an effective integer cycle $Z \geq E$ and two vertices $u', u'' \in |Z|$ such that 
$H^0(\calO_Z(K+Z))_{reg} \neq \emptyset$, $Z_{u'} = Z_{u''} = 1$ and $0 < e_Z(u') = e_Z(u'') = e_Z(u', u'') \leq 2$.

With these conditions there exists a line bundle $\calL \in \im(c^{-E_{u'}^* - E_{u''}^*}(Z))$, such that $h^0(Z, \calL) = 2$.
\end{theorem}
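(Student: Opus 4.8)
Write $d:=e_Z(u')=e_Z(u'')=e_Z(u',u'')\in\{1,2\}$ and $l':=-E_{u'}^*-E_{u''}^*$, so $l'\in-\calS'$ and $|l'|^*=\{u',u''\}$. As in the proofs of Theorems \ref{twovertices} and \ref{onevertex}, the hypothesis $H^0(\calO_Z(K+Z))_{reg}\neq\emptyset$ gives $\chi(Z')>\chi(Z)$ for $0\le Z'<Z$, $h^0(\calO_Z)=1$ and $h^0(\calO_Z(K+Z))=h^1(\calO_Z)=1-\chi(Z)$. Since $Z_{u'}=Z_{u''}=1$, Lemma \ref{injectiveabel} makes $c^{-E_{u'}^*}(Z)$ and $c^{-E_{u''}^*}(Z)$ injective; as $\eca^{-E_{u'}^*}(Z)$ is irreducible of dimension $Z_{u'}=1$, the Zariski closures $C_1\subset\pic^{-E_{u'}^*}(Z)$ and $C_2\subset\pic^{-E_{u''}^*}(Z)$ of their images are irreducible rational curves, genuinely $1$-dimensional because $e_Z(u'),e_Z(u'')>0$. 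A generic divisor of $\eca^{l'}(Z)$ has the form $p+q$ with $p,q$ transversal points of $E_{u'},E_{u''}$, so $\overline{\im(c^{l'}(Z))}=\overline{C_1+C_2}$, the sum taken inside $H^1(\calO_Z)$ after the standard affine identifications of $\pic^{-E_{u'}^*}(Z),\pic^{-E_{u''}^*}(Z),\pic^{l'}(Z)$ with $\pic^0(Z)$. By \cite{NNI} (Theorem \ref{th:DUALVO} and the discussion of $e_Z$ and $V_Z$) the affine spans of $\overline{C_1},\overline{C_2},\overline{C_1+C_2}$ are translates of $V_Z(u'),V_Z(u''),V_Z(\{u',u''\})$; by hypothesis all three equal one $d$-dimensional subspace $V\subset H^1(\calO_Z)$, so $\dim\overline{\im(c^{l'}(Z))}\le d$. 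Finally, for $\calL\in\im(c^{l'}(Z))$ one has $h^0(Z,\calL)=h^0(\calO_Z)+\dim c^{-1}(\calL)=1+\dim c^{-1}(\calL)$, so it suffices to exhibit $\calL\in\im(c^{l'}(Z))$ with $\dim c^{-1}(\calL)=1$.

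In the case $d=1$: here $C_1$ and $C_2$ are dense in parallel affine lines, hence $\overline{C_1+C_2}$ is an affine line and $\dim\overline{\im(c^{l'}(Z))}=1$ --- it cannot be $0$, since a one-point image would force $c^{-E_{u'}^*}(Z)$ to be constant, contradicting $\dim C_1=1$. As $\eca^{l'}(Z)$ is irreducible of dimension $2$, every fibre of $c^{l'}(Z)$ has dimension $\ge 2-1=1$, and it cannot be $2$ (that would contract $\eca^{l'}(Z)$ to a point); hence $\dim c^{-1}(\calL)=1$, i.e. $h^0(Z,\calL)=2$, for every $\calL\in\im(c^{l'}(Z))$, and such $\calL$ exists because the image is nonempty. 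This settles the case $d=1$.

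In the case $d=2$ (the main difficulty): first, $\overline{C_1}$ is not contained in a line --- otherwise $\overline{\im(c^{-2E_{u'}^*}(Z))}=\overline{C_1+C_1}$ would also be a line and, iterating, $e_Z(u')=\dim\overline{\im(c^{-nE_{u'}^*}(Z))}=1$ for all $n$, a contradiction --- and likewise for $\overline{C_2}$; thus $\overline{C_1}$ and $\overline{C_2}$ are non-degenerate rational plane curves in a translate of $V$, $\overline{\im(c^{l'}(Z))}=\overline{C_1+C_2}$ is $2$-dimensional, and $c^{l'}(Z)$ is birational onto its image. Using injectivity of $c^{-E_{u'}^*}(Z)$, some fibre of $c^{l'}(Z)$ is $1$-dimensional precisely when $\overline{C_1}$ and $\overline{C_2}$ are reflections of one another through a point of $\pic^{l'}(Z)$, and then the corresponding $\calL$ has $h^0(Z,\calL)=2$ exactly (the fibre contains the graph $\{(p,q):c^{-E_{u'}^*}(Z)(p)=\calL\otimes c^{-E_{u''}^*}(Z)(q)^{-1}\}$, which is $1$-dimensional, and it is not $2$-dimensional since $\overline{\im(c^{l'}(Z))}$ is a surface). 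To produce such an $\calL$ I would reduce to the case $d=1$: by a cohomological--cycle construction in the spirit of Lemma \ref{cyclesrecursive} (repeatedly passing from a cycle to the cohomological cycle of its difference with $E_w$, for $w$ a neighbour of $u'$ or $u''$, and controlling base points via Lemma \ref{baseI} and Lemma \ref{basepointregular}) one produces an effective cycle $\bar Z\le Z$ with $|\bar Z|$ connected, $u',u''\in|\bar Z|$, $\bar Z_{u'}=\bar Z_{u''}=1$, $H^0(\calO_{\bar Z}(K+\bar Z))_{reg}\neq\emptyset$, $e_{\bar Z}(u')=e_{\bar Z}(u'')=e_{\bar Z}(u',u'')=1$ and $h^1(\calO_Z)-h^1(\calO_{\bar Z})=1$; by the case $d=1$ there is $\bar\calL\in\im(c^{l'}(\bar Z))$ with $h^0(\bar Z,\bar\calL)=2$. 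Writing $r$ for the restriction $\pic^{l'}(Z)\to\pic^{l'}(\bar Z')$ to the relevant intermediate cohomological cycle $\bar Z'$ and $\mfl$ for the lift of $\bar\calL$ to $\bar Z'$, a computation with Theorem \ref{relativspace} and Proposition \ref{redcycleF} gives $\dim\eca^{l',\mfl}(Z)=1$ and $\dim r^{-1}(\mfl)=1$; one then shows --- as in Proposition \ref{relgenericcontr}, via the relative Abel maps of \cite{R} (Theorems \ref{th:dominantrel} and \ref{th:hegy2rel}) together with the genericity of $\tX$ and of $Z$ --- that $\eca^{l',\mfl}(Z)$ is nonempty and the relative Abel map $\eca^{l',\mfl}(Z)\to r^{-1}(\mfl)$ is not dominant. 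Its image is then a single line bundle $\calL$, and since $\calL'\otimes\calO_Z(D)^{-1}$ being trivial forces $D\in\eca^{l',\mfl}(Z)$, we get $c^{-1}(\calL)=\eca^{l',\mfl}(Z)$ of dimension $1$, i.e. $h^0(Z,\calL)=2$.

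The hardest part, exactly as in Lemma \ref{propred} and Proposition \ref{relgenericcontr}, is this last step: verifying the relative-dominance inequality of Theorem \ref{th:dominantrel} fails, which amounts to controlling the cohomology numbers $h^1$ of the restricted natural line bundles entering it after blowing up the neighbours of $u'$ and $u''$ in generic auxiliary points, using the genericity of the analytic structure to keep the relevant $h^0$'s unchanged under the auxiliary deformations. A secondary delicate ingredient is to check that the cohomological--cycle construction above retains all the stated properties in the present situation, where --- unlike in the setup of Lemma \ref{cyclesrecursive} --- one does not assume a priori the existence of a $g^1_2$.
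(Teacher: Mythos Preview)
Your $d=1$ case is fine, but your $d=2$ argument has a genuine gap and is also far more elaborate than necessary. The paper's proof handles both cases uniformly in a few lines.

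The key idea you are missing is to use the canonical map directly. Since $\calO_Z(K+Z)$ has no base points on $E_{u'}$ and $E_{u''}$ (Lemmas \ref{baseI} and \ref{basepointregular}), the complete linear series gives
\[
\eta: E_{u'}\cup E_{u''}\longrightarrow \bP\big(H^0(\calO_Z(K+Z))^*\big)\cong \bP\big(H^1(\calO_Z)\big).
\]
By the duality of Theorem \ref{th:DUALVO}, $\eta(E_{u'})\subset \bP(V_Z(u'))$ and $\eta(E_{u''})\subset \bP(V_Z(u''))$. The hypothesis $e_Z(u')=e_Z(u'')=e_Z(u',u'')\le 2$ says $V_Z(u')=V_Z(u'')$ and this common space has dimension $\le 2$, so $\bP(V_Z(u'))=\bP(V_Z(u''))$ is at most a $\bP^1$. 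Both $E_{u'}$ and $E_{u''}$ map nonconstantly (since $e_Z(u'),e_Z(u'')>0$) into this same $\bP^1$, hence surjectively; so there are generic smooth points $p\in E_{u'}$, $q\in E_{u''}$ with $\eta(p)=\eta(q)$. This means $H^0(\calO_Z(K+Z-p))=H^0(\calO_Z(K+Z-p-q))$, whence by Serre duality and Riemann--Roch $h^0(\calO_Z(p+q))=2$.

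Your proposed route for $d=2$---reduce to $d=1$ via a cohomological--cycle descent \`a la Lemma \ref{cyclesrecursive}, then control a relative Abel map as in Proposition \ref{relgenericcontr}---is circular as you yourself suspect. Every step of Lemma \ref{cyclesrecursive} (showing $Z^t_{w'}=Z^t_{w''}$, that $e$ drops by exactly one, etc.) uses the assumed symmetry of the curves $\overline{\im(c^{-E_{u'}^*})}$ and $\overline{\im(c^{-E_{u''}^*})}$, i.e.\ the existence of the $g^1_2$. Without that input you have no mechanism forcing the cohomological cycle of $Z-E_{w'}$ to drop the $E_{w''}$-coefficient in tandem, and no way to guarantee $e_{\bar Z}(u')=e_{\bar Z}(u'')=e_{\bar Z}(u',u'')$ at the smaller level. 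The ``relative non-dominance'' step you sketch likewise needs a specific $\mfl$ whose existence you have not established. So the argument does not close.
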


\begin{proof}

Let us have the complete linear series $|H^0(\calO_Z(Z + K))|$ on the cycle $Z$.
Snce the line bundle $\calO_Z(Z + K)$ hasn't got a base point on the exceptional divisors $E_{u'}, E_{u''}$ this complete linear series gives a map $\eta: E_{u'} \cup E_{u''} \to \bP(H^0(\calO_Z(Z + K))^*)$.

Notice that $H^0(\calO_Z(Z + K))^* \cong H^1(\calO_Z)$ and $V_Z(u'), V_Z(u')$ are linear subspaces of $H^1(\calO_Z)$, which means that
$\bP(V_Z(u')), \bP(V_Z(u'))$ are projective subspaces of $\bP(H^0(\calO_Z(Z + K))^*)$.
We know that $\bP(V_Z(u')) = \bP(V_Z(u'))$ and $\dim(\bP(V_Z(u'))) =  \dim( \bP(V_Z(u'))) \leq 1$ since $e_Z(u') = e_Z(u'') = e_Z(u', u'') \leq 2$.

One can show easily that $\eta(E_{u'}) \subset \bP(V_Z(u'))$ and $\eta(E_{u''}) \subset \bP(V_Z(u''))$.

It follows that there are two generic smooth points $p \in E_{u'}, q \in E_{u''}$ such that $\eta(p) = \eta(q)$.
It means that $$H^0(\calO_Z(Z + K - p)) = H^0(\calO_Z(Z + K - q)) = H^0(\calO_Z(Z + K - p -q)).$$

It yields $h^1(\calO_Z(p + q)) = h^1(\calO_Z(p)) = 1$, so $h^1(\calO_Z(p + q)) =2$.
This proves our statement completely.
\end{proof}

\end{document}